\newtheorem{proposition}{Proposition}[section]
\newtheorem{theorem}[proposition]{Theorem}
\theoremstyle{definition}
\newtheorem{definition}[proposition]{Definition}
\newtheorem{example}[proposition]{Example}
\newtheorem{blanco}[proposition]{}
\theoremstyle{remark}
\newcommand{\thlabel}[1]{\label{th:#1}}
\newcommand{\thref}[1]{Theorem~\ref{th:#1}}
\newcommand{\selabel}[1]{\label{se:#1}}
\newcommand{\seref}[1]{Section~\ref{se:#1}}
\newcommand{\prlabel}[1]{\label{pr:#1}}
\newcommand{\prref}[1]{Proposition~\ref{pr:#1}}
\newcommand{\bllabel}[1]{\label{bl:#1}}
\newcommand{\blref}[1]{\ref{bl:#1}}
\newcommand{\exlabel}[1]{\label{ex:#1}}
\newcommand{\exref}[1]{Example~\ref{ex:#1}}
\newcommand{\delabel}[1]{\label{de:#1}}
\newcommand{\deref}[1]{Definition~\ref{de:#1}}
\newcommand{\eqlabel}[1]{\label{eq:#1}}
\newcommand{\equref}[1]{(\ref{eq:#1})}
\def\equal#1{\smash{\mathop{=}\limits^{#1}}}
\newcommand{\Hom}{{\rm Hom}}
\def\lan{\langle}
\def\ran{\rangle}
\def\ot{\otimes}
\def\Sets{{\ul{\rm Sets}}}
\def\Fam{{\ul{\rm Fam}}}
\def\GG{{\mathbb G}}
\def\rightactr{\hbox{$\leftharpoonup$}}
\def\leftactr{\hbox{$\rightharpoonup$}}
\newcommand{\Aa}{\mathcal{A}}
\newcommand{\Cc}{\mathcal{C}}
\newcommand{\Dd}{\mathcal{D}}
\newcommand{\Gg}{\mathcal{G}}
\newcommand{\Mm}{\mathcal{M}}
\newcommand{\Tt}{\mathcal{T}}
\newcommand{\Xx}{\mathcal{X}}
\newcommand{\Zz}{\mathcal{Z}}
\newcommand{\YD}{\mathcal{YD}}
\newcommand{\YDT}{\mathcal{YDT}}
\newcommand{\YDZ}{\mathcal{YDZ}}
\def\*C{{}^*\hspace*{-1pt}{\Cc}}
\def\text#1{{\rm {\rm #1}}}
\def\ol{\overline}
\def\ul{\underline}
\begin{document}
\title[Yetter-Drinfeld modules versus Doi-Hopf modules]
{Algebras graded by discrete Doi-Hopf data and the Drinfeld double of 
a Hopf group-coalgebra}
\author{D. Bulacu}
\address{Faculty of Mathematics and Informatics, University
of Bucharest, Strada Academiei 14, RO-010014 Bucharest 1, Romania and
Faculty of Engineering, 
Vrije Universiteit Brussel, B-1050 Brussels, Belgium}
\email{daniel.bulacu@fmi.unibuc.ro}
\thanks{
The first author was supported by FWO-Vlaanderen
(FWO GP.045.09N), and by CNCSIS $479/2009$, code ID 1904.
This research is part of the FWO project G.0117.10 ``Equivariant Brauer groups
and Galois deformations".}
\author{S. Caenepeel}
\address{Faculty of Applied Sciences, 
Vrije Universiteit Brussel, VUB, B-1050 Brussels, Belgium}
\email{scaenepe@vub.ac.be}
\urladdr{http://homepages.vub.ac.be/\~{}scaenepe/}

\subjclass[2010]{16T05}

\keywords{Hopf group-coalgebra, duality, Doi-Hopf module, Yetter-Drinfeld module}

\begin{abstract}
We study Doi-Hopf data and Doi-Hopf modules for Hopf group-coalgebras.
We introduce modules graded by a discrete Doi-Hopf datum; to a Doi-Hopf datum
over a Hopf group coalgebra, we associate an algebra graded by the underlying
discrete Doi-Hopf datum, using a smash product type construction.
The category of Doi-Hopf modules is then isomorphic to the category of
graded modules over this algebra. This is applied to the category of Yetter-Drinfeld
modules over a Hopf group coalgebra, leading to the construction of the
Drinfeld double. It is shown that this Drinfeld double is a quasitriangular
$\GG$-graded Hopf algebra.
\end{abstract}
\maketitle

\section*{Introduction}
Hopf group coalgebras have been introduced by Turaev \cite{turaev}, and are important
for the study of certain 3-manifolds. A purely algebraic study of Hopf group coalgebras
and related structures was started in \cite{vire}, and continued by several authors,
see for example \cite{wang,wang2,zun}. For a recent survey, we refer to \cite{turaev2}.
A categorical explanation was presented in \cite{cl}, where it was shown that
group coalgebras, resp. Hopf group coalgebras, are coalgebras, resp. Hopf algebras
in a suitable symmetric monoidal category $\Tt_k$. We will recall this construction in
\blref{1.4}; it provides a natural method to generalize results of classical Hopf algebra
theory to the setting of Hopf group coalgebras. For example, it is explained in
\cite[Sec. 4]{cl} how Yetter-Drinfeld modules over Hopf group coalgebras can be
introduced: first we introduce the category of modules over a Hopf group coalgebra,
and then we compute its center, which is a braided monoidal
category, by construction.\\
A crucial result in the classical theory is now the following: to a finite dimensional
Hopf algebra $H$, we can associate a new Hopf algebra $D(H)$, called the Drinfeld
double of $H$. $D(H)$ is quasitriangular, and the category of modules over $D(H)$
is isomorphic to the category of Yetter-Drinfeld modules.
The following natural question now arises: can this result be generalized to the setting
of Hopf group coalgebras? At first glance, this looks like another straightforward application
of the methods developed in \cite{cl}. This is not the case, and the underlying
reason for this is the fact that the category $\Tt_k$ is not rigid. However, we have a
duality functor on $\Tt_k$, but this takes values in a different category $\Zz_k$.\\
To explain this, let us look at a less complicated situation. The dual of a finite dimensional
coalgebra is an algebra, and the category of comodules over the coalgebra is isomorphic
to the category of representations of the dual algebra. If we look at a group coalgebra,
this is a coalgebra in $\Tt_k$, then the dual is an algebra in $\Zz_k$, which turns out to
be a $G$-graded algebra. The category of representations of such an algebra
is then the category of modules graded by a (variable) $G$-set. We have two versions of
this representation category, one with a forgetful functor to $\Tt_k$, and one with a
forgetful functor to $\Zz_k$. There are two corresponding duality results, which are
explained in \blref{1.4bis}.\\
Before looking at Yetter-Drinfeld modules, we consider Doi-Hopf modules; these are more
general, but the formalism is easier, see \cite{cmz}. Doi-Hopf data and Doi-Hopf modules in
$\Tt_k$ are discribed in \blref{1.5} and \blref{1.6}. Our aim is then to describe the category
of Doi-Hopf modules as a category of representations. Before we are able to do this, we have
to introduce a new kind of graded algebra. Recall that a Doi-Hopf datum consists of a
Hopf algebra $H$, an $H$-comodule algebra $A$ and an $H$-module coalgebra $C$.
This construction can be performed in any braided monoidal category, for example in
the category of sets, leading to the notion of discrete Doi-Hopf datum, see \blref{1.2}.
In \seref{2}, we introduce algebras graded by a discrete Doi-Hopf datum $(G,\Lambda,X)$, and in \seref{3},
we discuss modules over a such a graded algebra, graded by a $(G,\Lambda,X)$-set.
Now if we have a Doi-Hopf datum in $\Tt_k$, then the underlying algebra and the dual of
the underlying coalgebra are both algebras, but in different monoidal categories.
However, we can still form their smash product, and this turns out to be an algebra
graded by the underlying Doi-Hopf datum $(G,\Lambda,X)$, see \seref{2}. The main
result of \seref{3} states that the category of Doi-Hopf modules is isomorphic to the
category of modules graded by $(G,\Lambda,X)$-sets, a result that comes in a $\Zz$-version
and in a $\Tt$-version, similar to the duality result in \blref{1.4bis}.\\
In \seref{4}, this result is applied to the category of Yetter-Drinfeld modules: we introduce
the Drinfeld double; it can be constructed as a smash product, and is an algebra graded
by a certain discrete Doi-Hopf datum, denoted $\GG$ throughout the paper.
In Sections \ref{se:5} and \ref{se:6}, we introduce quasitriangular $\GG$-graded Hopf algebras, and
we show that the Drinfeld double is such a quasitriangular $\GG$-graded Hopf algebra.

\section{Preliminaries}\selabel{1}
\begin{blanco}\bllabel{1.1} {\bf Monoidal categories.}
Let $\Cc$ be a monoidal category. One can define algebras, coalgebras, (bi)modules
and (bi)comodules in $\Cc$. If $\Cc$ is braided, then we can consider bialgebras and
Hopf algebras in $\Cc$. Doi-Hopf data and Doi-Hopf modules can then be introduced;
in the case where $\Cc$ is symmetric, this was done in \cite{ph}, and it is easy to see
that this can be extended to arbitrary braided monoidal categories. Let us
briefly recall the definitions. Let $H$ be a bialgebra in $\Cc$. The category $\Cc^H$ of right $H$-comodules
is monoidal, and a right $H$-comodule algebra is an algebra in $\Cc^H$. The category of right
$H$-modules $\Cc_H$ is also monoidal, and a coalgebra in this category is called a right
$H$-module coalgebra. A (right-right) Doi-Hopf datum is a triple $(H,A,C)$, where $H$ is
a bialgebra, $A$ is a right $H$-comodule algebra and $C$ is a right $H$-module coalgebra.
An $(H,A,C)$-Doi-Hopf module is an object $M\in \Cc$ together with a right $A$-action
$\nu_M$ and a right $C$-coaction $\rho_M$ such that thecompatibility condition 
$\rho_M\nu_M=(\nu_M\ot C)(M\ot \psi)(\rho_M\ot A)$, where
 $\psi=(A\ot \nu_C)(c_{C, A}\ot A)(C\ot \rho_A):\ C\ot A\to A\ot C$.
$\psi$ is called the entwining morphism. $c$ is the braiding.
We will denote the category of right $(H, A, C)$-Doi-Hopf 
modules and right $A$-linear right $C$-colinear morphisms
by $\Cc(H)_A^C$.
\end{blanco}

\begin{blanco}\bllabel{1.2} {\bf Discrete Doi-Hopf data.}
Let us describe Doi-Hopf data in $\Sets$. An algebra in $\Sets$ is a monoid. Every set $X$ is in a unique
way a coalgebra in $\Sets$: the comultiplication is the diagonal map $X\to X\times X$, and the
augmentation map is the unique map $X\to \{*\}$, where $\{*\}$ is a singleton. With this coalgebra
structure, every monoid is a bialgebra in $\Sets$. A Hopf algebra in $\Sets$ is then a group.
Let $G$ be a monoid. A $G$-comodule algebra is a monoid $\Lambda$ together with a
morphism of monoids $\gamma:\ \Lambda\to G$. The corresponding $G$-coaction $\Lambda\to
\Lambda\times G$ sends $\lambda$ to $(\lambda,\gamma(\lambda))$. Finally, a $G$-module
coalgebra is a right $G$-set. All these assertions are well-known; they can be proved as easy exercises,
and details can be found in \cite{cl}. We conclude that a Doi-Hopf datum $(G,\Lambda,X)$ in $\Sets$ 
consists of two monoids $G$ and $\Lambda$, a monoid map $\gamma:\ \Lambda\to G$ and
a right $G$-set $X$. We will call $(G,\Lambda,X)$ a discrete Doi-Hopf datum.\\
Now it is easy to show that an object in $\Sets(G)^X_\Lambda$ is a right $\Lambda$-set $Y$
together with a map $\beta:\ Y\to X$ such that $\beta(y\lambda)=\beta(y)\gamma(\lambda)$,
for all $y\in Y$ and $\lambda\in \Lambda$. We call $Y$ a $(G,\Lambda,X)$-set.\\
A morphism $Y\to Y'$ in $\Sets(G)^X_\Lambda$ is a map of right $\Lambda$-sets
$\eta:\ Y\to Y'$ satisfying $\beta'(\eta(y))=\beta(y)$, for all $y\in Y$.\\
An example of a $(G,\Lambda,X)$-set is $Y=\Lambda\times X$, with $\beta(\lambda,x)=x$
and $(\lambda,x)\lambda'=(\lambda\lambda',x\gamma(\lambda'))$.
\end{blanco}

\begin{blanco}\bllabel{1.3} {\bf The Fam-category.}
Let $\Cc$ be a braided monoidal category. To simplify the computations, we assume that $\Cc$ is
strict; this assumptions is justified by the fact that every monoidal category is equivalent to a strict one,
see for example \cite{k}. A new braided monoidal category $\Fam(\Cc)$ is introduced as follows:
objects are families of objects in $\Cc$ indexed by a set $X$, which we denote as
$\ul{M}=(X, (M_x)_{x\in X})$, where $X$ is a set, and $M_x\in \Cc$, for all $x\in X$.
A morphism $\ul{M}\to\ul{M}'$ is a couple $\ul{\varphi}=(f, (\varphi_x)_{x\in X})$, where $f:\ X\to X'$ is
a map and $\varphi_x:\ M_x\to M'_{f(x)}$ is a morphism in $\Cc$. The composition of morphisms
is defined in the obvious way. The tensor product on $\Fam(\Cc)$ is given by
$$\ul{M}\ot \ul{N}=(X\times X', (M_x\ot M'_{x'})_{(x,x')\in X\times X'}).$$
The unit object is $(\{*\},k)$, where $\{*\}$ is a singleton, and $k$ is the unit object of $\Cc$. The
braiding $\ul{c}$ is given by 
$$\ul{c}_{\ul{M},\ul{M}'}=(t, c_{M_x,M'_{x'}}):\ \ul{M}\ot \ul{M}'\to \ul{M}'\ot \ul{M},$$
where $t:\ X\times X'\to X'\times X$ is the switch map, and $c$ is the braiding on $\Cc$.\\
Obviously, we have a strictly monoidal functor $U:\ \Fam(\Cc)\to \Sets$, sending $(X, (M_x)_{x\in X})$
to $X$ and $(f, (\varphi_x)_{x\in X})$ to $f$.
\end{blanco}

\begin{blanco}\bllabel{1.4} {\bf Group-coalgebras.}
Group-coalgebras and Hopf group-coalgebras have been introduced by Turaev in \cite{turaev}.
In \cite{vire}, Virelizier studied Hopf group-coalgebras from an algebraic point of view.
Group-coalgebras and related structures have been investigated by several authors, see
for example \cite{wang,wang2,zun}. For a recent survey, see \cite{turaev2}.\\
Let $k$ be a field (or, more generally, a commutative ring), and $\Mm_k$ the category of
$k$-vector spaces (or, in the case where $k$ is a commutative ring, $k$-modules). Now
consider the categories
$$\Zz_k=\Fam (\Mm_k)~~{\rm and}~~\Tt_k=\Fam(\Mm_k^{\rm op})^{\rm op}.$$
$\Zz_k$ and $\Tt_k$ have the same objects $\ul{M}=(X,(M_x)_{x\in X})$, where $X$ is
a set, and $M_x$ is a $k$-module, for all $x\in X$. For the description of the morphisms in $\Zz_k$,
see \blref{1.3}, with $\Cc$ replaced by $\Mm_k$. A morphism $\ul{M}=(X,(M_x)_{x\in X})
\to \ul{N}=(Y,(N_y)_{y\in Y})$ in $\Tt_k$ is a couple $(f,(\varphi_y)_{y\in Y})$, with $f:\ Y\to X$
a map, and $\varphi_y:\ M_{f(x)}\to N_y$ a $k$-linear map, for every $y\in Y$.\\
It was observed in
\cite{cl} that a group-coalgebra (resp. a Hopf group-coalgebra) is a coalgebra (resp. a Hopf algebra)
in $\Tt_k=\Fam(\Mm_k^{\rm op})^{\rm op}$. An algebra in $\Tt_k$ is a collection of $k$-algebras
indexed by a set $X$.\\
In a similar way, a coalgebra in $\Zz_k$ is a collection of $k$-coalgebras indexed by a set $X$.
Algebras in $\Zz_k$ are in one-to-one correspondence to algebras graded by a monoid.
\end{blanco}

\begin{blanco}\bllabel{1.4bis} {\bf Isomorphism of categories.}
Let $\Cc$ be as in \blref{1.3}, and consider the subcategory $\Fam^{bij}(\Cc)$ of $\Fam(\Cc)$,
with the same objects as $\Fam(\Cc)$, but with morphisms of the form $(f,(\varphi_x)_{x\in X})$,
with $f$ a bijection. Then we have an isomorphism $F$ between the categories
$\Zz_k^{bij}$ and $\Tt_k^{bij}$, acting as the identity on objects. At the level of morphisms,
$F$ is defined by
$$F(f, (\varphi_x)_{x\in X})=(f^{-1}, (\varphi_{f^{-1}(y)})_{y\in Y}).$$
\end{blanco}

\begin{blanco}\bllabel{1.4ter} {\bf A duality result.}
It is well-known that the dual $B=C^*$ of a $k$-coalgebra $C$ is a $k$-algebra; we have a
functor $\Mm^C\to \Mm_{B^{\rm op}}$, which is an isomorphism of categories if $C$ is finitely generated
and projective as a $k$-module.\\
We will now discuss a similar result for group coalgebras. Actually, it is a special case of a more
general duality result that will be discussed in the subsequent sections. But this special case
might be illuminating, as it incorporates some of the subtleties that will reappear later in a more
general situation, and this is why we decided to give an outline here.\\
Let $A$ be a $G$-graded $k$-algebra, and $\Zz_A$ the category of right modules over $A$, viewed
as an algebra in $\Zz_k$. The objects of $\Zz_A$ are couples $(X,M)$, where $X$ is a right $G$-set,
and $M=\oplus_{x\in X} M_x$ is a right $A$-module graded by $X$; we refer to \cite{NastasescuRV90}
for detail on modules graded by $G$-sets. A morphism $(X,M)\to (Y,N)$ in $\Zz_A$ is a couple
$(f,\varphi)$, where $f:\ X\to Y$ is a morphism of $G$-sets, and $\varphi:\ M\to N$ is a right
$A$-module map such that $\varphi(M_x)\subset N_{f(x)}$, for all $x\in X$. It is obvious that we
have a forgetful functor $\Zz_A\to \Zz_k$.\\
We have a second category $\Tt_A$, with the same objects as $\Zz_A$, but morphisms defined in
a different way: 
$(f,(\varphi_y)_{y\in Y}):\ (X,M)\to (Y,N)$
consists of a morphisms of right $G$-sets $f:\ Y\to X$, and a bunch of $k$-linear maps
$\varphi_y:\ M_{f(y)}\to N_y$ such that
$\varphi_{yg}(ma)=\varphi_y(m)a$,
for all $m\in M_{f(y)}$ and $a\in A_g$. Observe that we have a forgetful functor $\Tt_A\to \Tt_k$.\\
In a similar way, we have two categories associated to a group coalgebra $\ul{C}=
(G,(C_g)_{g\in G})$, one with a forgetful functor to $\Tt_k$, and the other one with a forgetful functor
to $\Zz_k$. The two categories $\Tt^{\ul{C}}$ and $\Zz^{\ul{C}}$ have the same objects
$(X,(M_x)_{x\in X})$, where $X$ is a right $G$-set, $M_x$ is a $k$-module, and
$$\rho_{x,g}:\ M_{xg}\to M_x\ot C_g$$
are $k$-linear maps such that the following coassociativity and counit conditions hold:
$$(M_x\ot \Delta_{g,h})\circ \rho_{x,gh}=(\rho_{x,g}\ot C_h)\circ \rho_{xg,h}~~;~~
(M_x\ot \varepsilon)\circ \rho_{x,e}=M_x.$$
A morphism $\ul{M}\to \ul{N}$ in $\Tt^{\ul{C}}$ is a morphism $(f,(\varphi_y)_{y\in Y})$ in $\Tt_k$
such that $f$ is a morphism of $G$-sets and
$$(\varphi_y\ot C_g)\circ \rho_{f(y),g}=\rho_{y,g}\circ \varphi_{yg}.$$
A morphism $\ul{M}\to \ul{N}$ in $\Zz^{\ul{C}}$ is a morphism $(f,(\varphi_x)_{x\in X})$ in $\Zz_k$
such that $f$ is a morphism of $G$-sets and
$$(\varphi_x\ot C_g)\circ \rho_{x,g}=\rho_{f(x),g}\circ \varphi_{xg}.$$
Now let $\ul{C}$ be a group coalgebra, and suppose that the underlying monoid $G$ is a group.
Write $B_g=C^*_{g^{-1}}$. Then $B=\oplus_{g\in G} B_g$ is a $G$-graded $k$-algebra, with
multiplication maps $B_g\ot B_{h}\to B_{gh}$ given by opposite convolution: for
$\xi\in C^*_{g^{-1}}$, $\xi'\in C^*_{h^{-1}}$ and $c\in C_{(gh)^{-1}}$, we have
$$(\xi\xi')(c)=\xi(c_{(2,g^{-1})})\xi'(c_{(1,h^{-1})}).$$
We have functors
$T:\ \Tt^{\ul{C}}\to \Tt_B$ and $Z:\ \Zz^{\ul{C}}\to \Zz_B$
defined as follows: at the level of objects, $T$ and $F$ are defined in the same way:
$$T(X,(M_x)_{x\in X})=Z(X,(M_x)_{x\in X})=\bigoplus_{x\in X} M_x,$$
with the following right $B$-action: for $m\in M_x$ and $\xi\in B_g=C_{g^{-1}}^*$:
$$m\xi=\lan \xi,m_{[1,g^{-1}]}\ran m_{[0,xg]}.$$
At the level of morphisms, $T$ and $Z$ are the identities. If every $C_g$ is finitely generated
and projective as $k$-modules, then $T$ and $F$ are isomorphisms of categories.
The inverse functors are defined as follows: if $M$ is graded by the $G$-set $X$, then
$T^{-1}(M)=Z^{-1}(M)= (X,(M_x)_{x\in X})$, with coaction maps
$\rho_{x,g}:\ M_{x,g}\to M_x\ot C_g$ given by the formula
$$\rho_{x,g}(m)=m\xi^{(g)}\ot c^{(g)}.$$
We implicitly introduced the following notation, which will be used throughout the rest of this
paper. It is well known that $C_{g}$ is finitely generated
and projective if and only if there exists a unique $\xi^{(g)}\ot c^{(g)}\in C_{g}^*
\ot C_{g}$, called finite dual basis of $C_g$ (summation is implicitly understood) such that $c=\xi^{(g)}(c) c^{(g)}$
and $\xi=\xi( c^{(g)})\xi^{(g)}$ for all $c\in C_g$ and $\xi\in C_g^*$.
Also observe that
\begin{equation}\eqlabel{2.9.2}
h\leftactr \xi^{(g)}\ot c^{(g)}=\xi^{(g)}\ot c^{(g)}h,
\end{equation}
for all $h\in H_{\gamma(g)}$. Indeed, let 
$\xi^{(g)}\ot c^{(g)}=\tilde{\xi}^{(g)}\ot \tilde{c}^{(g)}$. Then
$$
h\leftactr \xi^{(g)}\ot c^{(g)}=(h\leftactr \xi^{(g)})(\tilde{c}^{g})
\tilde{\xi}^{g}\ot c^{(g)}
=
\tilde{\xi}^{g}\ot (h\leftactr \xi^{(g)})(\tilde{c}^{g})c^{(g)}=
\xi^{(g)}\ot c^{(g)}h.$$
\end{blanco}

\begin{blanco}\bllabel{1.5} {\bf Doi-Hopf data in $\Tt_k$.}
First, let $\ul{H}=
(G, (H_g)_{g\in G})$ be a semi-Hopf group coalgebra, that is a bialgebra in $\Tt_k$. This means that
we have the following data and properties:
\begin{itemize}
\item $G$ is a monoid;
\item $H_g$ is a $k$-algebra, for every $g\in H$;
\item we have $k$-algebra maps $\varepsilon:\ H_e\to k$ and $\Delta_{g,g'}:\
H_{gg'}\to H_g\ot H_{g'}$, for all $g,g'\in G$.
\end{itemize}
The Sweedler notation for the comultiplication maps is the following:
$$\Delta_{g,g'}(h)=h_{(1,g)}\ot h_{(2,g')}.$$
The following coassociativity and counit property have to be satisfied:
$$(H_g\ot \Delta_{g',g''})\circ \Delta_{g,g'g''}=(\Delta_{g,g'}\ot H_{g''})\circ \Delta_{gg',g''};$$
$$(H_g\ot \varepsilon)\circ \Delta_{g,e}=(\varepsilon\ot H_g)\circ\Delta_{e,g}= H_g.$$
Now let $\ul{A}=(X,(A_x)_{x\in X})$ be a right $\ul{H}$-comodule algebra. This means that we have
the following data and properties:
\begin{itemize}
\item $X$ is a right $G$-set;
\item $A_x$ is a $k$-algebra, for all $x\in X$;
\item we have $k$-algebra maps $\rho_{x,g}:\ A_{xg}\to A_x\ot H_g$; 
\end{itemize}
The following coassociativity and counit properties have to hold:
$$(A_x\ot \Delta_{g,h})\circ \rho_{x,gh}=(\rho_{x,g}\ot H_h)\circ \rho_{xg,h};$$
$$(A_x\ot\varepsilon)\circ \rho_{x,e}=A_x.$$
We use the following Sweedler-type notation for the coaction maps:
$$\rho_{x,g}(a)=a_{[0,x]}\ot a_{[1,g]}.$$
Finally, let $\ul{C}=(\Lambda, (C_\lambda)_{\lambda\in \Lambda})$ be a right $\ul{H}$-module
coalgebra. This means that we have the following:
\begin{itemize}
\item $\Lambda$ is a monoid, and we have a monoid morphism $\gamma:\ \Lambda\to G$;
\item $C_\lambda$ is a right $H_{\gamma(\lambda)}$-module, for every $\lambda\in \Lambda$;
\item $\ul{C}$ is a group-coalgebra, that is, we have $k$-linear maps
$\Delta_{\lambda,\lambda'}:\ C_{\lambda\lambda'}\to C_{\lambda}\ot C_{\lambda'}$
and $\varepsilon:\ C_e\to k$ satisfying the appropriate coassociativity and counit properties;
\item 
the following compatibility conditions have to be fulfilled: for all $c\in C_{\lambda\lambda'}$ and
$h\in H_{\gamma(\lambda\lambda')}$, we have
$$\Delta_{\lambda,\lambda'}(ch)=c_{(1,\lambda)}h_{(1,\gamma(\lambda))}
\ot c_{(2,\lambda')}h_{(2,\gamma(\lambda'))},$$
and $\varepsilon(ch)=\varepsilon(c)\varepsilon(h)$, for all $c\in C_e$, $h\in H_e$.
\end{itemize}
Observe that $(G,\Lambda,X)$ is a discrete Doi-Hopf datum; we call it the discrete Doi-Hopf
datum underlying $(\ul{H},\ul{A},\ul{C})$.
\end{blanco}

\begin{blanco}\bllabel{1.6} {\bf Doi-Hopf modules in $\Tt_k$.}
Now we describe the objects of $\ul{M}=(Y,(M_y)_{y\in Y})\in\Tt_k(\ul{H})_{\ul{A}}^{\ul{C}}$. These consist of the following
data
\begin{itemize}
\item a $(G,\Lambda,X)$-set $Y$ (see \blref{1.2});
\item for every $y\in Y$, a right $A_{\beta(y)}$-module $M_y$;
\item $k$-linear maps $\rho_{y,\lambda}:\ M_{y\lambda}\to M_y\ot C_\lambda$, satisfying the appropriate
coassociativity and counit conditions;
\item 
the following compatibility conditions have to be satisfied, for all $m\in M_{y\lambda}$ and
$a\in A_{\beta(y\lambda)}$:
\end{itemize}
\begin{equation}\eqlabel{1.4.1}
\rho_{y,\lambda}(ma)=m_{[0,y]}a_{[0,\beta(y)]}\ot m_{[1,\lambda]}a_{[1,\gamma(\lambda)]}.
\end{equation}
Here we use the following Sweedler-type notation for the coaction on $\ul{M}$:
$\rho_{y,\lambda}(m)=m_{[0,y]}\ot m_{[1,\lambda]}$, for $m\in M_{y\lambda}$.\\
A morphism $\ul{M}=(Y,(M_y)_{y\in Y})\to \ul{M}'=(Y',(M'_{y'})_{y'\in Y'})$ in $\Tt_k(\ul{H})_{\ul{A}}^{\ul{C}}$
is a couple $(\eta, (\varphi_{y'})_{y'\in Y'})$, where
\begin{itemize}
\item $\eta:\ Y'\to Y$ is a morphism of $(G,\Lambda,X)$-sets;
\item for every $y'\in Y'$, $\varphi_{y'}:\ M_{\eta(y')}\to M'_{y'}$ is a right $A_{\beta'(y')}$-linear map;
\item for all $y'\in Y'$ and $\lambda\in \Lambda$,  diagram \equref{1.4.2} commutes.
\end{itemize}
\begin{equation}\eqlabel{1.4.2}
\xymatrix{
M_{\eta(y')\lambda}\ar[d]_{\rho_{\eta(y'),\lambda}}\ar[rr]^{\varphi_{y'\lambda}}&&
M'_{y'\lambda}\ar[d]^{\rho'_{y',\lambda}}\\
M_{\eta(y')}\ot C_\lambda\ar[rr]^{\varphi_{y'}\ot C_\lambda}&& M'_{y'}\ot C_\lambda}
\end{equation}
\end{blanco}

\begin{example}\exlabel{1.6}
Let $G$ be a monoid; then $(G,G,G)$ is a discrete Doi-Hopf datum. $G$ is a right
$G$-module by right multiplication, and the identity on $G$ is a morphism of monoids.
A $(G,G,G)$-set is a right $G$-set $Y$ together with a map $\beta:\ Y\to G$
satisfying $\beta(yg)=\beta(y)g$, for all $y\in Y$ and $g\in G$.\\
Let $\ul{H}=(G, (H_g)_{g\in G})$ be a semi-Hopf group coalgebra. $(\ul{H},\ul{H},\ul{H})$
is a Doi-Hopf datum in $\Tt_k$. A Doi-Hopf module $(Y, (M_y)_{y\in Y})\in
\Tt_k(\ul{H})^{\ul{H}}_{\ul{H}}$ consists of the following data: $Y$ is $(G,G,G)$-set
as above; every $M_y$ is a right $H_{\beta(y)}$-module, and
$\rho_{yg}:\ M_{yg}\to M_y\ot H_g$ is a coassociative coaction. For every
$m\in M_{yg}$ and $h\in H_{\beta(yg)}$, we have the compatibility relation
$$\rho_{y,g}(mh)=m_{[0,y]}h_{(1,\beta(y))}\ot m_{[1,g]}h_{(2,g)}.$$
These Doi-Hopf modules are simply called Hopf modules, and have been considered
in \cite[Sec. 3.1]{cl}, where the Structure Theorem for Doi-Hopf modules was discussed.
\end{example}

\begin{blanco}\bllabel{1.7}
Let $\Zz_k(\ul{H})_{\ul{A}}^{\ul{C}}$ be the category with the same objects as
$\Tt_k(\ul{H})_{\ul{A}}^{\ul{C}}$, but with morphisms defined in a different way.
A morphism $\ul{M}=(Y,(M_y)_{y\in Y})\to \ul{M}'=(Y',(M'_{y'})_{y'\in Y'})$ in $\Zz_k(\ul{H})_{\ul{A}}^{\ul{C}}$
is a couple $(\eta, (\varphi_{y})_{y\in Y})$, where
\begin{itemize}
\item $\eta:\ Y\to Y'$ is a morphism of $(G,\Lambda,X)$-sets;
\item for every $y\in Y$, $\varphi_y:\ M_y\to M'_{\eta(y)}$ is a morphism of
$A_{\beta(y)}=A_{\beta'(\eta(y))}$-modules;
\item for all $y\in Y$ and $\lambda\in \Lambda$, diagram \equref{1.7.1} commutes.
\end{itemize}
\begin{equation}\eqlabel{1.7.1}
\xymatrix{
M_{y\lambda}\ar[d]_{\rho_{y,\lambda}}\ar[rr]^{\varphi_{y\lambda}}&&
M'_{\eta(y)\lambda}\ar[d]^{\rho'_{\eta(y),\lambda}}\\
M_y\ot C_\lambda \ar[rr]^{\varphi_y\ot C_\lambda}&& M'_{\eta(y)}\ot C_\lambda}
\end{equation}
\end{blanco}

\section{Algebras graded by a discrete Doi-Hopf datum}\selabel{2}
\begin{definition}\delabel{2.1}
Let $(G,\Lambda,X)$ be a discrete Doi-Hopf datum.
A $(G,\Lambda,X)$-graded algebra is an associative algebra $A$ (not necessarily with unit)
together with a direct sum decomposition
$$A=\oplus_{\lambda\in \Lambda}\oplus_{x\in X} A_{\lambda,x},$$
such that
\begin{equation}\eqlabel{2.1}
A_{\lambda,x}A_{\lambda',x'}\subset \delta_{x',x\gamma(\lambda')} A_{\lambda\lambda',x'},
\end{equation}
where $\delta$ is the Kronecker symbol. Moreover, for every $x\in X$, there exists a $1_x\in A_{e,x}$
such that 
\begin{eqnarray}
a1_x&=&a,~{\rm for~all}~\lambda\in\Lambda~{\rm and}~a\in A_{\lambda,x};\eqlabel{2.1.1}\\
1_xb&=&b,~{\rm for~all}~\lambda\in\Lambda~{\rm and}~b\in A_{\lambda,x\gamma(\lambda)}.\eqlabel{2.1.2}
\end{eqnarray}
\end{definition}

\begin{proposition}\prlabel{2.1a}
Let $A$ be a $(G,\Lambda,X)$-graded algebra, with either $G$ or $\Lambda$ a group, and put
$$A_{\lambda}=\oplus_{x\in X} A_{\lambda,x},$$
for all $\lambda\in \Lambda$. Then $A=\oplus_{\lambda\in \Lambda} A_\lambda$
is a $\Lambda$-graded algebra with idempotent local units. If $X$ is finite, then
$A$ is a $\Lambda$-graded algebra with unit $1=\sum_{x\in X} 1_x$.
\end{proposition}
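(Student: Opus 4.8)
The plan is to verify the three assertions separately, keeping track of exactly where the hypothesis that $G$ or $\Lambda$ is a group enters. First, the $\Lambda$-grading: regrouping the given decomposition yields $A=\oplus_{\lambda\in\Lambda}A_\lambda$ with $A_\lambda=\oplus_{x\in X}A_{\lambda,x}$, and \equref{2.1} shows that for fixed $\lambda,\lambda',x$ the product $A_{\lambda,x}A_{\lambda',x'}$ vanishes unless $x'=x\gamma(\lambda')$; hence $A_\lambda A_{\lambda'}\subseteq\oplus_{x\in X}A_{\lambda\lambda',x\gamma(\lambda')}\subseteq A_{\lambda\lambda'}$. This part needs no group hypothesis.

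Next I would record two elementary facts about the family $(1_x)_{x\in X}$. Taking $\lambda=e$ in \equref{2.1.1} gives $1_x1_x=1_x$, so each $1_x$ is idempotent; and \equref{2.1} forces $1_x1_{x'}\in\delta_{x',x}A_{e,x}$, so distinct $1_x$ are orthogonal. Consequently, for every finite subset $X_0\subseteq X$ the element $e_{X_0}:=\sum_{x\in X_0}1_x$ is an idempotent of $A_e$, and the collection $\{e_{X_0}\}$ is upward directed, since $e_{X_0}e_{X_0\cup X_1}=e_{X_0\cup X_1}e_{X_0}=e_{X_0}$ by orthogonality. The key point — and the only place the group assumption is used — is that for each $\lambda\in\Lambda$ the translation $X\to X$, $x'\mapsto x'\gamma(\lambda)$, is a bijection: if $G$ is a group its inverse is $x\mapsto x\gamma(\lambda)^{-1}$, while if $\Lambda$ is a group its inverse is $x\mapsto x\gamma(\lambda^{-1})$, because $\gamma(\lambda^{-1})\gamma(\lambda)=\gamma(e)=e$ acts trivially on $X$. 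Denote by $x_\lambda$ the unique preimage of $x$ under this translation; then $A_{\lambda,x}=A_{\lambda,x_\lambda\gamma(\lambda)}$, so \equref{2.1.2} yields $1_{x_\lambda}a=a$ for $a\in A_{\lambda,x}$, while \equref{2.1.1} gives $a1_x=a$. Combined with orthogonality this shows $e_{X_0}a=a=ae_{X_0}$ whenever $a\in A_{\lambda,x}$ with $x,x_\lambda\in X_0$. Given finitely many elements $a_1,\dots,a_n\in A$, I would take $X_0$ to be the finite set of all $x$ and all $x_\lambda$ occurring in the homogeneous decompositions of the $a_i$; then $e_{X_0}$ is a common two-sided local unit for $a_1,\dots,a_n$. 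Hence $A$ is a $\Lambda$-graded algebra with idempotent local units, the units being homogeneous of degree $e$.

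Finally, if $X$ is finite one takes $X_0=X$ throughout: $1:=\sum_{x\in X}1_x\in A_e$ is idempotent and satisfies $1a=a=a1$ for every homogeneous $a$ by the computation above, hence for all $a\in A$, so $1$ is a unit (homogeneous of degree $e$). I expect the only genuinely nontrivial step to be the identification of the bijectivity of the maps $x'\mapsto x'\gamma(\lambda)$ as the precise content of the hypothesis that $G$ or $\Lambda$ is a group: without it \equref{2.1.2} need not supply a left local unit for a given homogeneous element. Everything else is routine bookkeeping with the grading axioms \equref{2.1}--\equref{2.1.2} and the orthogonality of the $1_x$.
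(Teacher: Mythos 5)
Your proposal is correct and follows essentially the same route as the paper: establish that $\gamma(\lambda)$ is invertible in $G$ under either group hypothesis, derive $a1_y=\delta_{x,y}a$ and $1_ya=\delta_{y,x\gamma(\lambda)^{-1}}a$ for $a\in A_{\lambda,x}$ from the orthogonal idempotents $1_x$, and then build a common local unit $\sum_{y\in I}1_y$ over a finite set $I$ containing all the relevant $x$ and $x\gamma(\lambda)^{-1}$ (your $x_\lambda$). The only cosmetic difference is that you phrase the group hypothesis as bijectivity of the translations $x'\mapsto x'\gamma(\lambda)$ and note the directedness of the idempotents explicitly, which the paper leaves implicit.
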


\begin{proof}
It follows from \equref{2.1} that $A_{\lambda}A_{\lambda'}\subset A_{\lambda\lambda'}$.
If $G$ or $\Lambda$ is a group, then $\gamma(\lambda)$ is invertible in $G$, for all
$\lambda\in \Lambda$.\\
Take $a\in A_{\lambda,x}$. From  (\ref{eq:2.1}-\ref{eq:2.1.2}), it follows that
$$a1_y=\delta_{x,y}a~~{\rm and}~~1_ya=\delta_{y,x\gamma(\lambda)^{-1}}a.$$
In particular, $1_x1_y=\delta_{x,y}1_x$, so $\{1_x~|~x\in X\}$ is a set of orthgonal idempotents.
This implies the following: for any finite subset $I\subset X$, we have the following implications:
\begin{eqnarray*}
x\in I&\Longrightarrow& a(\sum_{y\in I}1_y)=a;\\
x\gamma(\lambda)^{-1}\in I&\Longrightarrow& (\sum_{y\in I}1_y)a=a.
\end{eqnarray*}
Now take a finite subset $B\subset A$. There exist $\lambda_1,\cdots,\lambda_n\in \Lambda$
and $x_1,\cdots,x_m\in X$ such that
$$B\subset \bigoplus_{i=1}^n\bigoplus_{j=1}^m A_{\lambda_i,x_j}.$$
We can always add $e$ to $\{\lambda_1,\cdots,\lambda_n\}$, so it is no restriction to assume that
$\lambda_1=e$.
Let $a$ be a homogeneous component of one of the elements of $B$. Then we find
$i\in \{1,\cdots n\}$ and $j\in \{1,\cdots, m\}$ such that
$a\in A_{\lambda_i,x_j}$. Consider
$I=\{x_j\gamma(\lambda_i)^{-1}~|~i\in \{1,\cdots n\},~j\in \{1,\cdots, m\}\}$.
Since $x_j=x_j\gamma(\lambda_1)^{-1},~x_j\gamma(\lambda_i)^{-1}\in I$, we have
$$a(\sum_{y\in I}1_y)=(\sum_{y\in I}1_y)a=a,$$
and then it follows that
$$b(\sum_{y\in I}1_y)=(\sum_{y\in I}1_y)b=b,$$
for all $b\in B$. If $X$ is finite, then the above arguments show that
$1=\sum_{x\in X} 1_x$ is a unit for $A$, and then $A$ is a unital $\Lambda$-graded
algebra.
\end{proof}

From \equref{2.1} and \equref{2.1.1}, we deduce that $A_{e,x}A_{e,x'}=\delta_{x,x'} A_{e,x'}$. Therefore every $A_{e,x}$
is a $k$-algebra, with unit $1_x$, and $(X, (A_{e,x})_{x\in X})$ is an algebra
in $\Tt_k$.\\
Assume that $X$ is finite, so that $A$ is a unital $\Lambda$-graded algebra.
Then $1\in A_e$, see for example \cite[Prop. I.1.1]{nvo}, and we can write
$1=\sum_{x\in X}1_x\in \oplus_{x\in X} A_{e,x}$.

\begin{proposition}\prlabel{2.1abis}
Take a discrete Doi-Hopf datum $(G,\Lambda,X)$, with $G$ or $\Lambda$ a group,
and assume that $X$ is finite.
The following assertions are equivalent:
\begin{enumerate}
\item[(a)] $A=\oplus_{\lambda\in \Lambda}\oplus_{x\in X} A_{\lambda,x}$ 
is a $(G,\Lambda,X)$-graded algebra;
\item[(b)] $A=\oplus_{\lambda\in \Lambda}\bigl(\oplus_{x\in X} A_{\lambda,x}\bigr)$ is 
a unital $\Lambda$-graded
algebra, and, with $1_x\in A_{e,x}$ defined as above,
\end{enumerate}
\begin{eqnarray}
A_{\lambda,x}1_{x'}&= & \delta_{x,x'} A_{\lambda,x};\eqlabel{2.1.4}\\
1_xA_{\lambda',x'}&=& \delta_{x',x\gamma(\lambda')}A_{\lambda',x'}.\eqlabel{2.1.5}
\end{eqnarray}
\end{proposition}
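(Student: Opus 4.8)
The plan is to prove the equivalence by showing each direction separately, relying on \prref{2.1a} and the two remarks that follow it for the implication (a)$\Rightarrow$(b), and checking the graded-multiplication axiom directly for (b)$\Rightarrow$(a).

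For (a)$\Rightarrow$(b): Given a $(G,\Lambda,X)$-graded algebra with $G$ or $\Lambda$ a group and $X$ finite, \prref{2.1a} already tells us that $A=\oplus_{\lambda}A_\lambda$ is a unital $\Lambda$-graded algebra with $1=\sum_{x}1_x$, and the remark after the proposition identifies $1\in A_e$ with its decomposition $\sum_x 1_x\in\oplus_x A_{e,x}$, so the $1_x$ in statement (b) are indeed the ones defined ``as above.'' It then remains to verify \equref{2.1.4} and \equref{2.1.5}. For \equref{2.1.4}: by \equref{2.1}, $A_{\lambda,x}A_{e,x'}\subset\delta_{x',x\gamma(e)}A_{\lambda,x'}=\delta_{x',x}A_{\lambda,x}$, which gives the inclusion $\subseteq$; the reverse inclusion when $x=x'$ follows from the local-unit property \equref{2.1.1}, namely $a=a1_x$ for $a\in A_{\lambda,x}$. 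For \equref{2.1.5}: again by \equref{2.1}, $1_xA_{\lambda',x'}\subset\delta_{x',x\gamma(\lambda')}A_{\lambda',x'}$, and when $x'=x\gamma(\lambda')$ the reverse inclusion is exactly \equref{2.1.2}.

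For (b)$\Rightarrow$(a): Starting from a unital $\Lambda$-graded algebra $A=\oplus_{\lambda}(\oplus_x A_{\lambda,x})$ satisfying \equref{2.1.4} and \equref{2.1.5}, with $1_x\in A_{e,x}$ the homogeneous components of $1$, I would first note that \equref{2.1.4} with $\lambda=e$ gives $1_x1_{x'}=\delta_{x,x'}1_x$ (using that $1_x\in A_{e,x}$ and that these are the components of the identity, so $1_x1_{x'}$ has $X$-degree forced to be $x$), hence the $1_x$ are orthogonal idempotents summing to $1$. The crucial point is to derive the refined multiplication rule \equref{2.1}. Take $a\in A_{\lambda,x}$ and $b\in A_{\lambda',x'}$. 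Since $A$ is $\Lambda$-graded, $ab\in A_{\lambda\lambda'}$, so it suffices to show $ab\in\delta_{x',x\gamma(\lambda')}A_{\lambda\lambda',x'}$. Write $ab=a\cdot 1\cdot b=\sum_{y}a 1_y b$. By \equref{2.1.4}, $a1_y=\delta_{x,y}a$, so $ab=a1_xb$; by \equref{2.1.5} applied with the roles adjusted, $1_x b=1_x A_{\lambda',x'}\ni 1_xb$ is zero unless $x'=x\gamma(\lambda')$, and when $x'=x\gamma(\lambda')$ it lies in $A_{\lambda',x'}$; combining, $ab=a(1_xb)\in A_{\lambda,x}A_{\lambda',x}$ when the $\delta$ is nonzero. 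To land in $A_{\lambda\lambda',x'}$ I would use the $X$-grading once more: decompose $ab=\sum_{z}(ab)1_z$ where $(ab)1_z$ lies in the degree-$(\lambda\lambda',z)$ piece by definition of the $A_{\mu,z}$ as $A_\mu\cap(A 1_z)$ — more precisely, \equref{2.1.4} shows $A_{\lambda\lambda',z}=A_{\lambda\lambda'}1_z$, and since $ab=a1_xb=(a1_x)(1_{x\gamma(\lambda')}b)$ already has a right factor in $A_{\cdot,x\gamma(\lambda')}$ which absorbs $1_{x\gamma(\lambda')}$ on the right, we get $ab=(ab)1_{x\gamma(\lambda')}\in A_{\lambda\lambda',x\gamma(\lambda')}$, matching $x'$. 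Finally, \equref{2.1.1} and \equref{2.1.2} for the $(G,\Lambda,X)$-graded structure follow from \equref{2.1.4} and \equref{2.1.5}: for $a\in A_{\lambda,x}$, $a1_x=a$ by \equref{2.1.4} (taking $x'=x$, together with $a\in A1_x$ since $A_{\lambda,x}=A_\lambda 1_x$), and for $b\in A_{\lambda,x\gamma(\lambda)}$, $1_xb=b$ by \equref{2.1.5}.

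The main obstacle I anticipate is bookkeeping the interaction between the two gradings: one must be careful that the sets $A_{\lambda,x}$ in statement (b) really are recovered as $A_\lambda\cap A1_x$ (equivalently $A_\lambda 1_x$), which is what \equref{2.1.4} encodes, and then that multiplying an element of $A_{\lambda,x}$ by one of $A_{\lambda',x'}$ on the right by $1_{x\gamma(\lambda')}$ does not change it — this is where \equref{2.1.5} is used in the form ``$b=1_{x}b$ forces, after transposing, $b1_{?}$'' — so I would double-check the direction of the $X$-action and the index $x\gamma(\lambda')$ versus $x'$ very carefully, as an off-by-one in the $G$-set action is the easiest place to slip. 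Everything else is a routine manipulation of orthogonal idempotents and graded components.
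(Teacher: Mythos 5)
Your proof is correct and follows essentially the same route as the paper's: (a)$\Rightarrow$(b) via \prref{2.1a} together with the inclusions coming from \equref{2.1}--\equref{2.1.2}, and (b)$\Rightarrow$(a) by first extracting the orthogonal idempotents $1_x1_{x'}=\delta_{x,x'}1_x$, killing $ab$ through $ab=a(1_xb)=0$ when $x'\neq x\gamma(\lambda')$, and placing $ab$ in the correct component via $ab=(ab)1_{x'}\in A_{\lambda\lambda'}1_{x'}=A_{\lambda\lambda',x'}$. One intermediate formula is garbled at exactly the spot you flagged: $ab=(a1_x)(1_{x\gamma(\lambda')}b)$ is false in general, since by \equref{2.1.5} the element $1_{x\gamma(\lambda')}b=1_{x'}b$ vanishes unless $\gamma(\lambda')$ fixes $x'$; what your conclusion actually uses is that $b$ absorbs $1_{x'}$ on the \emph{right}, i.e.\ $b1_{x'}=b$, which is \equref{2.1.1} for $b\in A_{\lambda',x'}$ and follows from \equref{2.1.4} and $b=b\cdot 1=\sum_y b1_y$ just as for $a$. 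With that one index transposed, your argument matches the paper's proof step for step.
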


\begin{proof}
$\ul{(a)\Rightarrow (b)}$. We have already seen above that $A$ is a unital $\Lambda$-graded algebra;
(\ref{eq:2.1.4}-\ref{eq:2.1.5}) follow immediately from (\ref{eq:2.1}-\ref{eq:2.1.2}).\\
$\ul{(b)\Rightarrow (a)}$. 
Take $a\in A_{\lambda, x}$. It follows from \equref{2.1.4} that $a1_{x'}=0$ if $x\neq x'$. Therefore
$a=a1=\sum_{x'\in X} a1_{x'}=a1_x$, proving \equref{2.1.1}. \equref{2.1.2} is proved in a similar way:
let $b\in A_{\lambda,x\gamma(\lambda)}$. It follows from \equref{2.1.5} that $1_{x'}b=0$ if
$x'\neq x$, so $b=1b=\sum_{x'\in X} 1_{x'}b=1_x b$.\\
Let $a=1_x\in A_{e,x}$. It follows that $1_x1_{x'}=\delta_{x,x'}1_x$.\\
In order to show that \equref{2.1} holds, take $a\in A_{\lambda,x}$ and $b\in A_{\lambda',x'}$.
Then $ab\in A_{\lambda\lambda'}$, since $A$ is a $\Lambda$-graded algebra. Now we have that
$$ab=(a1_x)(1_{x'\gamma(\lambda')^{-1}}b)=a(1_x1_{x'\gamma(\lambda')^{-1}})b=0,$$
if $x\neq x'\gamma(\lambda')^{-1}$, or, equivalently, $x'\neq x\gamma(\lambda')$. Now let
$x'= x\gamma(\lambda')$. Since $A$ is a $\Lambda$-graded algebra, $a\in A_\lambda$ and
$b\in A_{\lambda'}$, we have that
$$ab\in A_{\lambda\lambda'}=\bigoplus_{y\in X} A_{\lambda\lambda',y}=
\bigoplus_{y\in X} A_{\lambda\lambda',y}1_y,$$
hence we have that $ab=\sum_{y\in X} c_y1_y$, with $c_y\in A_{\lambda\lambda',y}$.
Now $b1_{x'}=b$, hence 
$$ab=ab1_{x'}=\sum_{y\in X} c_y 1_y1_{x'}=c_{x'}1_{x'}\in A_{\lambda\lambda',x'}1_{x'}
=A_{\lambda\lambda',x'},$$
and this shows that \equref{2.1} holds.
\end{proof} 

\begin{blanco}\bllabel{2.2}{\bf 2-categorical interpretation.}
The definition of algebra graded by a discrete Doi-Hopf datum can be rephrased in terms of
2-categories. For more detail on 2-categories, we refer the reader to \cite[Ch. 7]{Borceux}.\\
To a discrete Doi-Hopf datum $(G,\Lambda,X)$, we associate a 2-category $\Gg$, under
the assumption that $G$ or $\Lambda$ is a group.
The objects of $\Gg$ are the elements of $X$, and the morphisms are the elements of
$\Lambda\times X$. $(\lambda,x)\in \Lambda\times X$ is a morphism with target $x$ and
source $x\gamma(\lambda)^{-1}$: $s(\lambda,x)=x\gamma(\lambda)^{-1}$ and $t(\lambda,x)=x$.
Then the composition $(\lambda',z)\circ (\lambda,y)$ is defined if and only
$y=t(\lambda, y)=s(\lambda',z)=z\gamma({\lambda'})^{-1}$, and, in this case
$(\lambda',z)\circ (\lambda,y)=(\lambda\lambda' ,z)$.
It is easy to verify that the identity morphism on $x$ is $(e,x)$.\\
Like every category, $\Gg$ can be viewed as a 2-category: the 0-cells are the objects of $\Gg$,
and, for all $x,y\in X$, $\Hom(x,y)$ is the discrete category with objects the morphisms $x\to y$
in $\Gg$. The only 2-cells are then the identity 2-cells. For every $x\in X$, we have
the unit functor $u_x:\ {\bf 1}\to \Hom(x,x)$, sending the object $0$ of ${\bf 1}$ to $(e,x)$,
and the morphism $1$ of ${\bf 1}$ to the identity of $(e,x)$. ${\bf 1}$ is the category with
one object $0$ and one morphism $1$.\\
The category of $k$-modules $\Mm_k$ is monoidal, so it can be viewed as a bicategory with
one object $*$. To simplify notation, we will treat $\Mm_k$ as if it were a strict monoidal category,
or a 2-category with one object. The unit functor $u_*:\ {\bf 1}\to \Mm_k$ sends $0$ to $k$ and
$1$ to the identity of $k$.
\end{blanco}

\begin{proposition}\prlabel{2.3}
Assume that $X$ is finite, and that $\Lambda$ or $G$ is a group.
Then we have a bijective correspondence between $(G,\Lambda,X)$-graded algebras
and lax functors $F:\ \Gg\to \Mm_k$.
\end{proposition}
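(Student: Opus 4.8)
The plan is to unwind both sides of the claimed bijection into concrete data and then match them term by term. Recall that a lax functor $F:\Gg\to\Mm_k$ assigns to each object $x\in X$ an object $F(x)=*$ of the one-object bicategory $\Mm_k$ (so nothing), to each $1$-morphism $(\lambda,x):x\gamma(\lambda)^{-1}\to x$ a $k$-module $F(\lambda,x)$, and to each composable pair a structure morphism $F(\lambda',z)\ot F(\lambda,y)\to F(\lambda\lambda',z)$ (composing in the order dictated by $\Gg$), together with unit morphisms $k=u_*(0)\to F(e,x)=F(u_x(0))$, subject to the lax associativity and unitality coherence axioms. So from $F$ I would read off modules $A_{\lambda,x}:=F(\lambda,x)$, multiplication maps $A_{\lambda,x}\ot A_{\lambda',x'}\to A_{\lambda\lambda',x'}$ which are declared to be zero unless the composite $(\lambda',x')\circ(\lambda,x)$ is defined in $\Gg$, i.e.\ unless $x=x'\gamma(\lambda')^{-1}$, equivalently $x'=x\gamma(\lambda')$, and elements $1_x\in A_{e,x}$ given by the image of $1\in k$ under the unit morphism.

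First I would verify that this data satisfies Definition~\ref{de:2.1}. The grading support condition \equ ref{2.1} is immediate from the definition of composition in $\Gg$: a product $A_{\lambda,x}A_{\lambda',x'}$ lands in $A_{\lambda\lambda',x'}$ exactly when $(\lambda,x)$ and $(\lambda',x')$ are composable, which is the Kronecker condition $x'=x\gamma(\lambda')$, and using here that $\Lambda$ or $G$ is a group so that $\gamma(\lambda')$ is invertible (this is what makes the source $x\gamma(\lambda')^{-1}$ in $\Gg$ well defined). Associativity of the multiplication on $A=\bigoplus_{\lambda,x}A_{\lambda,x}$ is precisely the lax associativity coherence axiom for $F$, evaluated on composable triples of $1$-morphisms; on non-composable triples both sides vanish, so there is nothing to check. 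The local unit identities \equ ref{2.1.1} and \equ ref{2.1.2} are the two lax unitality coherence axioms for $F$: the right unit law at the $1$-morphism $(\lambda,x)$ gives $a1_x=a$ for $a\in A_{\lambda,x}$, since $(\lambda,x)\circ(e,s(\lambda,x))=(\lambda,x)$ and the relevant source object is $x\gamma(\lambda)^{-1}$; and the left unit law gives $1_x b=b$ for $b\in A_{\lambda,x\gamma(\lambda)}$, since $(e,x)\circ(\lambda,x\gamma(\lambda))=(\lambda,x\gamma(\lambda))$ (here $x$ is the target of $(\lambda,x\gamma(\lambda))$). Note finiteness of $X$ is not needed for this direction, but we keep the hypothesis to match the statement.

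Conversely, given a $(G,\Lambda,X)$-graded algebra $A$, I would define $F(x)=*$, $F(\lambda,x)=A_{\lambda,x}$, let $F$ on composable $1$-morphisms be the restriction of the multiplication of $A$ (which by \equ ref{2.1} indeed has the right target), and define the unit morphism $k\to F(e,x)$ by $1\mapsto 1_x$. The lax coherence axioms for this $F$ then translate back into: associativity of the restricted multiplications, which follows from associativity in $A$ together with \equ ref{2.1} (products outside the support are zero, so associativity of $A$ restricted to homogeneous components gives exactly the coherence squares); and the two unit laws \equ ref{2.1.1}--\equ ref{2.1.2}, which are literally the lax left and right unitality axioms. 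Finally I would check that these two assignments are mutually inverse, which is transparent since on both sides the passage only repackages the same modules, multiplication maps, and distinguished elements $1_x$.

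The main obstacle is essentially bookkeeping: getting the variance and the composition order in $\Gg$ to line up with the grading convention \equ ref{2.1} (note that $(\lambda',z)\circ(\lambda,y)=(\lambda\lambda',z)$, so the ``outer'' morphism contributes on the left of the product, and the target index $z$ is the one that survives, matching $A_{\lambda,x}A_{\lambda',x'}\subseteq A_{\lambda\lambda',x'}$). Once the dictionary is fixed, every condition in Definition~\ref{de:2.1} is one of the three lax-functor axioms (associator, left unitor, right unitor) restricted to the discrete $2$-categorical setting where all $2$-cells are identities, so no genuine calculation remains; one should just be careful to state explicitly that on non-composable tuples all structure maps are the zero map and both sides of every coherence condition vanish.
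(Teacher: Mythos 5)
Your proposal is correct and takes essentially the same route as the paper: both arguments unwind the data of a lax functor $\Gg\to \Mm_k$ (trivial on the unique $0$-cell of $\Mm_k$, the modules $A_{\lambda,x}$ on $1$-morphisms, the composition transformations giving a multiplication satisfying \equref{2.1}, and the unit transformations giving the $1_x$ satisfying \equref{2.1.1} and \equref{2.1.2}), and identify the lax associativity and unitality coherence diagrams with the axioms of \deref{2.1}. Your remark that finiteness of $X$ is not actually used is accurate, as the paper's proof does not use it either.
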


\begin{proof}
According to  \cite[Def. 7.5.1]{Borceux}, a lax functor $F:\ \Gg\to \Mm_k$ consists of the following data:\\
(a) for every $x\in X$, a 0-cell $F(x)$ of $\Mm_k$. Since $\Mm_k$ has only one 0-cell, so there is
only one way to define $F$ at the level of 0-cells;\\
(b) for every $x,y\in X$, a functor $F_{x,y}:\ \Hom(x,y)\to \ul{\Hom}(F(x),F(y))=\Mm_k$.
Since $\Hom(x,y)$ is discrete, it suffices to give $F_{x\gamma(\lambda)^{-1},x}(\lambda,x)$, for every morphism
$(\lambda,x)$ in $\Gg$. Write $F_{x\gamma(\lambda)^{-1},x}(\lambda,x)=A_{(\lambda,x)}$.\\
(c) for $x,y,z\in X$, we have to give a natural transformation $\mu:\ F_{x,y}\Rightarrow F_{y,z}\to F_{x,z}$.
This means that for every $(\lambda,y)\in \Hom(x,y)$ and $(\lambda',z)\in \Hom(y,z)$, we have
to give a $k$-linear map
$$\mu_{(\lambda,y),(\lambda',z)}:\ A_{(\lambda,y)}\ot A_{(\lambda',z)}\to A_{(\lambda\lambda',z)}.$$
Since $y=z\gamma({\lambda'})^{-1}$, we find $k$-linear maps
$A_{(\lambda,y)}\ot A_{(\lambda',y\gamma(\lambda'))}\to A_{(\lambda\lambda',z)}$, which is precisely
what is needed to define on $A=\oplus_{(\lambda,x)\in \Lambda\times X} A_{(\lambda,x)}$ a multiplication that satisfies
\equref{2.1}. The naturality of $\mu$ is automatically fulfilled since $\Hom(x,y)$ is discrete. The associativity
of the multiplication on $A$ follows from the functorial properties of $F$.\\
(d) For all $x\in X$, we need a natural transformation
$$\delta_x:\ u_*\Rightarrow F_{x,x}\circ u_x.$$
This natural transformation is determined by a linear map
$$\delta_x(0):\ u_*(0)=k\to F_{x,x}(u_x(0))= A_{e,x}.$$
The diagrams (7.12) in \cite{Borceux} have to commute. In our particular situation, this means
that the diagrams
$$\xymatrix{
k\ot A_{\lambda,x\gamma(\lambda)}\ar[rr]^{\delta_x(0)\ot A_{\lambda,x\gamma(\lambda)}}
\ar[drr]_=&& A_{e,x}\ot A_{\lambda,x\gamma(\lambda)}
\ar[d]^{\mu_{(e,x),(\lambda,x\gamma(\lambda))}}\\
&&A_{\lambda,x\gamma(\lambda)}
}$$
$$\xymatrix{
A_{\lambda,x}\ot k \ar[rr]^{A_{\lambda,x}\ot \delta_x(0)}
\ar[drr]_=&& A_{\lambda,x}\ot A_{e,x}
\ar[d]^{\mu_{(\lambda,x),(e,x)}}\\
&&A_{\lambda,x}
}$$
commute. Now write $1_x=\delta_x(0)(1_k)$. The commutativity of the above diagrams is
equivalent to (\ref{eq:2.1.1}-\ref{eq:2.1.2}).
\end{proof}

\begin{blanco}{\bf The smash product.}\bllabel{2.5}
We propose a first method to construct algebras graded by a discrete Doi-Hopf datum
$(G,\Lambda,X)$, in the situation where $G$ or $\Lambda$ is a group.
Let $\ul{A}$ be a right $\ul{H}$-comodule algebra, as in \blref{1.5}.
Let $B$ be a $\Lambda$-graded algebra, and assume that every $B_\lambda$ is
a left $H_{\gamma(\lambda)^{-1}}$-module; the action of $h\in H_{\gamma(\lambda)^{-1}}$
on $b\in B_\lambda$ is denoted by $h\leftactr b$. Moreover, assume that
\begin{equation}\eqlabel{2.5.1}
h\leftactr (bb')= (h_{(2,\gamma(\lambda)^{-1})}\leftactr b)(h_{(1,\gamma(\lambda')^{-1})}\leftactr b'),
\end{equation}
for all $b\in B_\lambda$, $b'\in B_{\lambda'}$, $h\in H_{\gamma(\lambda\lambda')^{-1}}$, and
$h\leftactr 1=\varepsilon(h)1$, for all $h\in H_e$. Now define
$$B\# \ul{A}=\bigoplus_{\lambda\in \Lambda}\bigoplus_{x\in X} B_\lambda\# A_x.$$
Here $B_\lambda\# A_x= B_\lambda\ot A_x$ as a $k$-module. We define a multiplication map
on $B\# \ul{A}$, making it a $(G,\Lambda,X)$-graded algebra. We need to define multiplication
maps
$$(B_\lambda\# A_x)\ot (B_{\lambda'}\# A_{x'})\to B\#\ul{A}.$$
If $x'\neq x\gamma(\lambda')$, then we let this multiplication map be zero. For $x'=x\gamma(\lambda')$,
$$\mu:\ (B_\lambda\# A_x)\ot (B_{\lambda'}\# A_{x'})\to B_{\lambda\lambda'}\# A_{x'}$$
is given by the formula
$$(b\# a)(b'\# a')=b(a_{[1,\gamma(\lambda')^{-1}]}\leftactr b')\# a_{[0,x\gamma(\lambda')]}a'.$$
\end{blanco}

\begin{proposition}\prlabel{2.6}
With notation as above, $B\#\ul{A}$ is an algebra graded by $(G,\Lambda,X)$.
\end{proposition}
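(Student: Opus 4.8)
The plan is to verify directly that the multiplication defined in \blref{2.5} satisfies the two axioms of \deref{2.1}: the grading condition \equref{2.1} and the existence of local units \equref{2.1.1}--\equref{2.1.2}. The grading condition \equref{2.1} is essentially built into the definition: the multiplication map $(B_\lambda\# A_x)\ot(B_{\lambda'}\# A_{x'})\to B\#\ul{A}$ was declared to be zero unless $x'=x\gamma(\lambda')$, and in the nonzero case its image lies in $B_{\lambda\lambda'}\# A_{x'}$ by inspection of the formula $(b\# a)(b'\# a')=b(a_{[1,\gamma(\lambda')^{-1}]}\leftactr b')\# a_{[0,x\gamma(\lambda')]}a'$; note here that since $x'=x\gamma(\lambda')$ and $G$ or $\Lambda$ is a group, $\gamma(\lambda')$ is invertible, so $a_{[1,\gamma(\lambda')^{-1}]}\leftactr b'$ makes sense because $b'\in B_{\lambda'}$ is acted on by $H_{\gamma(\lambda')^{-1}}$ and the first leg of $\rho_{x\gamma(\lambda'),\gamma(\lambda')^{-1}}(a)$ sits in $H_{\gamma(\lambda')^{-1}}$.

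The local units are the obvious candidates: set $1_x=1_{B_e}\# 1_{A_x}\in B_e\# A_x=(B\#\ul{A})_{e,x}$, using that $B$ is unital with $1_B\in B_e$ and each $A_x$ is unital. For \equref{2.1.1}, compute $(b\# a)(1_{B_e}\# 1_{A_x})$ with $b\in B_\lambda$, $a\in A_x$: one checks $x\gamma(e)=x$, so the product is $b(a_{[1,e]}\leftactr 1_{B_e})\# a_{[0,x]}1_{A_x}$; by the counit property of the coaction $\rho_{x,e}$ and the hypothesis $h\leftactr 1=\varepsilon(h)1$ for $h\in H_e$, this collapses to $b\#a$. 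For \equref{2.1.2}, take $b'\in B_{\lambda'}$, $a'\in A_{x\gamma(\lambda')}$ and compute $(1_{B_e}\# 1_{A_x})(b'\# a')$; here the matching condition $x\gamma(\lambda')=x\gamma(\lambda')$ holds, and the product is $1_{B_e}((1_{A_x})_{[1,\gamma(\lambda')^{-1}]}\leftactr b')\#(1_{A_x})_{[0,x\gamma(\lambda')]}a'$. Since $\rho_{x,\cdot}$ is a $k$-algebra map, $\rho_{x\gamma(\lambda'),\gamma(\lambda')^{-1}}(1_{A_{x\gamma(\lambda')}})=1_{A_x}\ot 1_{H_{\gamma(\lambda')^{-1}}}$; wait---more carefully, $1_{A_x}$ here should be read as the unit of $A_x$ and the coaction relevant is $\rho_{x,e}$ applied after identifying via invertibility, but in any case the grouplike behaviour of units under the algebra map $\rho$ together with $1_{H}\leftactr b'=\varepsilon$-triviality gives $1_{B_e}b'\# 1_{A_x}a'=b'\#a'$.

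The real work is associativity of the multiplication on $B\#\ul{A}$. The plan is to take three homogeneous elements $b\#a\in B_\lambda\# A_x$, $b'\#a'\in B_{\lambda'}\# A_{x'}$, $b''\#a''\in B_{\lambda''}\# A_{x''}$; both $((b\#a)(b'\#a'))(b''\#a'')$ and $(b\#a)((b'\#a')(b''\#a''))$ vanish unless $x'=x\gamma(\lambda')$ and $x''=x'\gamma(\lambda'')=x\gamma(\lambda'\lambda'')$, so one may assume these, and then expand both sides. Expanding the left side introduces $\rho$ applied to $a$ (in the inner product) and then $\rho$ applied to $a_{[0,\cdot]}$ (in the outer product), which by the coassociativity axiom $(A_x\ot\Delta_{g,h})\circ\rho_{x,gh}=(\rho_{x,g}\ot H_h)\circ\rho_{xg,h}$ of the comodule algebra rearranges into a single two-fold coaction; the $B$-part requires the compatibility \equref{2.5.1} together with associativity of $B$ and the coalgebra anti-comultiplication bookkeeping, specifically that $\Delta$ on $H_{\gamma(\lambda'\lambda'')^{-1}}$ lands in $H_{\gamma(\lambda'')^{-1}}\ot H_{\gamma(\lambda')^{-1}}$ (note the order reversal, matching the right-handedness in \equref{2.5.1}). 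Expanding the right side instead first multiplies $b'\#a'$ with $b''\#a''$ and then hits the result with the action of a leg of $\rho(a)$; matching the two requires exactly one application of \equref{2.5.1} to move $a_{[1,\cdot]}\leftactr(-)$ past a product in $B$, one application of coassociativity of $\rho_A$ to reconcile the iterated coactions, and one application of the Sweedler coassociativity of $\Delta_H$. I expect this associativity verification to be the main obstacle—not conceptually, but because keeping the grading subscripts $\gamma(\lambda)^{-1}$ consistent through the coassociativity substitution is delicate; the bookkeeping is the kind that \equref{2.9.2}-style identities are designed to tame, and I would organise the computation so that the comodule-algebra coassociativity is invoked exactly once on each side before comparing.
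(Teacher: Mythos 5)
Your proposal is correct and follows essentially the same route as the paper: the grading condition is immediate from the definition of the multiplication, the local units are $1_B\#1_{A_x}$ verified via the counit property of $\rho_{x,e}$, the hypothesis $h\leftactr 1=\varepsilon(h)1$, multiplicativity of the coaction, and unitality of the $H$-action on $B$, and associativity is a direct expansion using \equref{2.5.1} once together with the comodule-algebra coassociativity $(A_x\ot\Delta_{g,h})\circ\rho_{x,gh}=(\rho_{x,g}\ot H_h)\circ\rho_{xg,h}$. The one point worth tightening is your justification of \equref{2.1.2}: the relevant fact is simply that $\rho_{x\gamma(\lambda'),\gamma(\lambda')^{-1}}(1_{A_x})=1_{A_{x\gamma(\lambda')}}\ot 1_{H_{\gamma(\lambda')^{-1}}}$ (since $\rho$ is an algebra map) and that the unit of $H_{\gamma(\lambda')^{-1}}$ acts as the identity on $B_{\lambda'}$, not any $\varepsilon$-triviality.
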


\begin{proof}
We have to show that the mulitplication is associative. Take $a\in A_x$, $a'\in A_{x'}$,
$a''\in A_{x''}$, $b\in B_{\lambda}$, $b'\in B_{\lambda'}$ and $b''\in B_{\lambda''}$. Also assume
that $x'=x\gamma(\lambda')$ and $x''=x'\gamma(\lambda'')$.
\begin{eqnarray*}
&&\hspace*{-15mm}
\bigl((b\#a)(b'\#a')\bigr)(b''\#a'')=
\bigl(b(a_{[1,\gamma(\lambda')^{-1}]}\leftactr b')\# a_{[0,x']}a'\bigr)(b''\#a'')\\
&=&
b(a_{[2,\gamma(\lambda')^{-1}]}\leftactr b')
\bigl((a_{[1,\gamma(\lambda'')^{-1}]}a'_{[1,\gamma(\lambda'')^{-1}]})\leftactr b''\bigr)
\# a_{[0,x'']}a'_{[0,x'']}a'';\\
&&\hspace*{-15mm}
(b\#a)\bigl((b'\#a')(b''\#a'')\bigr)
=(b\#a)\bigl(b'(a'_{[1,\gamma(\lambda'')^{-1}]}\leftactr b'')\# a'_{[0,x'']}a''\bigr)\\
&=&b\Bigl(a_{[1,\gamma(\lambda'\lambda'')^{-1}]}\leftactr \bigl(b'(a'_{[1,\gamma(\lambda'')^{-1}]}
\leftactr b'')\bigr)\Bigr) \# a_{[0,x'']} a'_{[0,x'']}a''\\
&=&
b(a_{[2,\gamma(\lambda')^{-1}]}\leftactr b')
\bigl((a_{[1,\gamma(\lambda'')^{-1}]}a'_{[1,\gamma(\lambda'')^{-1}]})\leftactr b''\bigr)
\# a_{[0,x'']}a'_{[0,x'']}a''.
\end{eqnarray*}
With the same notation, we easily compute that
\begin{eqnarray*}
&&\hspace*{-2cm}
(1\#1_x)(b'\# a')= 1(1_{\gamma(\lambda')^{-1}})\leftactr b')\# 1_{x'}a'=b'\#a';\\
&&\hspace*{-2cm}
(b\# a)(1\# 1_x)= b(a_{[1,\gamma(\lambda')^{-1}]}\leftactr 1)\# a_{[0,x']}1_{x'}\\
&=& b\# \varepsilon (a_{[1,\gamma(\lambda')^{-1}]})a_{[0,x']} =b\# a.
\end{eqnarray*}
\end{proof}

\begin{blanco}\bllabel{2.7} {\bf The Koppinen smash product.}
Now we introduce a second method to construct algebras graded by a discrete
Doi-Hopf datum. Let $(\ul{H},\ul{A},\ul{C})$ be a Doi-Hopf datum in $\Tt_k$, with
$\Lambda$ a group, and let
$$\Aa=\bigoplus_{\lambda\in G}\bigoplus_{x\in X} \Hom(C_{\lambda^{-1}},A_x)=
\bigoplus_{\lambda\in G}\bigoplus_{x\in X} \Aa_{\lambda,x}.$$
We define multiplication maps
$$ \Aa_{\lambda,x}\ot  \Aa_{\lambda',x'} \to \Aa;$$
for $x'\neq x\gamma(\lambda')$, this multiplication map is $0$. For
$x'= x\gamma(\lambda')$, then we describe
$$\mu:\ \Aa_{\lambda,x}\ot  \Aa_{\lambda',x'} \to \Aa_{\lambda\lambda',x'}.$$
For $f\in \Aa_{\lambda,x}$ and
$g\in \Aa_{\lambda',x'}$, $\mu(f\ot g)=f\# g\in \Aa_{\lambda\lambda',x'}$, 
is given by the following formula, for $c\in C_{(\lambda\lambda')^{-1}}$:
$$(f\# g)(c)=
f(c_{(2,\lambda^{-1})})_{[0,x']}g\bigl( c_{(1,(\lambda')^{-1})}f(c_{(2,\lambda^{-1})})_{[1,\gamma(\lambda')^{-1}]}
\bigr)\in A_{x'}.$$
\end{blanco}

\begin{proposition}\prlabel{2.8}
$\Aa$, as defined in \blref{2.7} is an algebra graded by $(G,\Lambda,X)$.
\end{proposition}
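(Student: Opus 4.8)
The plan is to verify the two axioms in \deref{2.1}: associativity of the multiplication $\mu$ together with \equref{2.1} (which is essentially built into the definition, since $\mu$ is declared to be $0$ unless $x'=x\gamma(\lambda')$ and otherwise lands in $\Aa_{\lambda\lambda',x'}$), and the existence of local units $1_x\in\Aa_{e,x}$. First I would identify the natural candidate for $1_x$: since $\Aa_{e,x}=\Hom(C_e,A_x)$ and $C_e$ has counit $\varepsilon:C_e\to k$, the map $1_x:C_e\to A_x$, $c\mapsto \varepsilon(c)1_{A_x}$, is the obvious choice. Checking \equref{2.1.1} and \equref{2.1.2} then amounts to plugging $1_x$ into the defining formula of $\mu$ and using the counit axioms for $\ul{C}$ (i.e. $(C_{\lambda^{-1}}\ot\varepsilon)\circ\Delta=\mathrm{id}$ and its mirror), the counit axiom $(A_x\ot\varepsilon)\circ\rho_{x,e}=A_x$ for the comodule algebra $\ul{A}$, and the fact that the $H_{\gamma(\lambda)^{-1}}$-action (hidden inside the $C$-module structure) is unital on the grouplike-type element coming from $\varepsilon$; concretely, that $\varepsilon(h)$ scalars pull through. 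These are short one-line computations of the type already illustrated at the end of the proof of \prref{2.6}.

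The main work is associativity: given $f\in\Aa_{\lambda,x}$, $g\in\Aa_{\lambda',x'}$, $h\in\Aa_{\lambda'',x''}$ with $x'=x\gamma(\lambda')$ and $x''=x'\gamma(\lambda'')=x\gamma(\lambda'\lambda'')$, I would compute $((f\# g)\# h)(c)$ and $(f\#(g\# h))(c)$ for $c\in C_{(\lambda\lambda'\lambda'')^{-1}}$ and show they agree. Both sides are obtained by applying the defining formula twice, so each becomes a nested expression involving: the iterated comultiplication of $c$ (use coassociativity of $\ul{C}$ to write it unambiguously with components in $C_{\lambda^{-1}}$, $C_{(\lambda')^{-1}}$, $C_{(\lambda'')^{-1}}$), the iterated comultiplication of $f(\cdots)_{[1,\ast]}$ and $g(\cdots)_{[1,\ast]}$ in $\ul{H}$ (use coassociativity of $\Delta_{g,g'}$ on $\ul{H}$ and the coassociativity of the $\ul{A}$-coaction $\rho$), the associativity of the multiplication in the $k$-algebra $A_{x''}$, and crucially the module-coalgebra compatibility from \blref{1.5}: $\Delta_{\mu,\mu'}(ch)=c_{(1,\mu)}h_{(1,\gamma(\mu))}\ot c_{(2,\mu')}h_{(2,\gamma(\mu'))}$, which is exactly what is needed to move an $H$-action past a comultiplication of $C$ when the formula for $\#$ feeds $c_{(1,(\lambda')^{-1})}$ acted on by a component of $f(c_{(2,\lambda^{-1})})_{[1,\ast]}$ into $g$. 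I would match the two expansions term by term; the bookkeeping of the $G$-indices is forced once one remembers that on $C_{\mu^{-1}}$ the relevant algebra is $H_{\gamma(\mu^{-1})}=H_{\gamma(\mu)^{-1}}$ (using that $\Lambda$, hence $\gamma(\Lambda)$, consists of invertible elements, so $\gamma(\mu)^{-1}=\gamma(\mu^{-1})$).

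The hard part will be keeping the Sweedler indices consistent: there are three interacting coassociativities (of $\ul{C}$, of $\ul{H}$, and of the $\ul{A}$-coaction), and the module-coalgebra axiom has to be applied at just the right place to collapse a term like $\Delta\bigl(c_{(1,\ldots)}\,(\text{component of }f(c_{(2,\ldots)})_{[1]})\bigr)$. I expect the cleanest route is to fix a target decomposition — say, expand both sides until everything is written in terms of $c_{(1)}$, $c_{(2)}$, $c_{(3)}$ (the threefold comultiplication of $c$) and a single nested coaction/comultiplication of $f(c_{(3)})$ — and then observe that the two orders of bracketing produce the same such expression. A useful sanity check is the special case $X=\{\ast\}$ with $\Lambda=G$ a group, where $\Aa$ reduces to (a graded version of) the Koppinen smash product of \cite{cmz}, whose associativity is classical; the general computation is the same modulo carrying the $X$-index along, and \equref{2.1} holds automatically because the product of something in $\Aa_{\lambda,x}$ with something in $\Aa_{\lambda',x'}$ is, by construction, supported in the single block $\Aa_{\lambda\lambda',x'}$ when $x'=x\gamma(\lambda')$ and is $0$ otherwise.
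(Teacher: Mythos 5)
Your plan is correct and follows essentially the same route as the paper: the grading condition \equref{2.1} is built into the definition, associativity is checked by expanding $((f\# g)\# h)(c)$ and $(f\#(g\# h))(c)$ on a threefold comultiplication of $c$ and matching via coassociativity, the multiplicativity of the coaction $\rho$ on $\ul{A}$, and the module-coalgebra compatibility, and the local units are exactly the maps $e_x(c)=\varepsilon(c)1_x$ used in the paper. No discrepancies.
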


\begin{proof}
Take $f\in \Aa_{\lambda,x}$, $g\in \Aa_{\lambda',x'}$ and $h\in \Aa_{\lambda'',x''}$.
Assume also that $x'=x\gamma(\lambda')$ and $x''=x'\gamma(\lambda'')$. We have to
show that $(f\# g)\# h=f\# (g\# h)$. For $c\in C_{(\lambda\lambda'\lambda'')^{-1}}$, we have
\begin{eqnarray*}
&&\hspace*{-10mm}
((f\# g)\# h)(c)\\
&=&
\bigl((f\# g)(c_{(2,(\lambda\lambda')^{-1})}\bigr)_{[0,x'']}
h\Bigl( c_{(1,(\lambda'')^{-1})} \bigl((f\# g)(c_{(2,(\lambda\lambda')^{-1})}\bigr)_{[1,\gamma(\lambda'')^{-1}]}\Bigr)\\
&=& f(c_{(3,\lambda^{-1})})_{[0,x'']}
g\Bigl( c_{(2,(\lambda')^{-1})}f(c_{(3,\lambda^{-1})})_{[2,\gamma(\lambda')^{-1}]}\Bigr)_{[0,x'']}
h\Bigl( c_{(1,(\lambda'')^{-1})} \\
&&f(c_{(3,\lambda^{-1})})_{[1,\gamma(\lambda'')^{-1}]}
g\bigl( c_{(2,(\lambda')^{-1})}f(c_{(3,\lambda^{-1})})_{[2,\gamma(\lambda')^{-1}]}\bigr)_{[1,\gamma(\lambda'')^{-1}]}
\Bigr)\\
&=&
f(c_{(2,\lambda^{-1})})_{[0,x'']}
(g\# h)\bigl(c_{(1,(\lambda'\lambda'')^{-1})}f(c_{(2,\lambda^{-1})})_{[1,\gamma(\lambda'\lambda'')^{-1}]}\bigr)\\
&=&
(f\# (g\# h))(c).
\end{eqnarray*}
Define $e_x: C_e\to A_x$ by $e_x(c)=\varepsilon(c)1_x$. Then it is easy to compute that
$e_x\# g=g$ and $f\# e_{x'}=f$.
\end{proof}

\begin{blanco}\bllabel{2.9}
Let $(\ul{H},\ul{A},\ul{C})$ be a Doi-Hopf datum in $\Tt_k$, with $\Lambda$ a group, and put
$$B=\bigoplus_{\lambda\in \Lambda} B_\lambda,$$
with $B_\lambda= C^*_{\lambda^{-1}}$. In \blref{1.4ter}, we showed that $B$ is a $\Lambda$-graded
algebra.
$B_\lambda$ is a left $H_{\gamma(\lambda)^{-1}}$-module: for $\xi\in B_\lambda$,
$h\in H_{\gamma(\lambda)^{-1}} $and $c\in C_{\lambda^{-1}}$, let
$$(h\leftactr \xi)(c)=\xi(ch).$$
It is easy to verify that \equref{2.5.1} is satisfied: for $\xi'\in B_{\lambda'}$,
$h\in H_{\gamma(\lambda\lambda')^{-1}}$ and $c\in C_{(\lambda\lambda')^{-1}}$, we have
\begin{eqnarray*}
&&\hspace*{-2cm}
(h\leftactr (\xi\xi'))(c)=(\xi\xi')(ch)
=\xi\bigl((ch)_{(2,\lambda^{-1})}\bigr) \xi'\bigl((ch)_{(1,(\lambda')^{-1})}\bigr)\\
&=& \xi\bigl(c_{(2,\lambda^{-1})}h_{(2,\gamma(\lambda)^{-1})}\bigr)
\xi'\bigl(c_{(1,(\lambda')^{-1})}h_{(1,\gamma(\lambda')^{-1})}\bigr)\\
&=&\bigl((h_{(2,\gamma(\lambda)^{-1})}\leftactr \xi)(h_{(1,\gamma(\lambda')^{-1})}\leftactr\xi')\bigr)(c).
\end{eqnarray*}
Now we can consider the smash product $B\#\ul{A}$, as in \blref{2.5}. Consider the maps
$$\alpha_{\lambda,x}:
C^*_{\lambda^{-1}}\# A_x\to \Aa_{\lambda,x}=\Hom(C_{\lambda^{-1}},A_x),~~
\alpha_{\lambda,x}(\xi\# a)(c)=\xi(c)a,$$
for $\xi\in C^*_{\lambda^{-1}}$, $a\in A_x$, $c\in C_{\lambda^{-1}}$. It is well-known that
$\alpha_{\lambda,x}$ is an isomorphism of $k$-modules if $C_{\lambda}$ is finitely generated
and projective as a $k$-module. For later use, we describe $\alpha_{\lambda,x}^{-1}$,
using the notation introduced in \blref{1.4ter} for the dual basis of $C_\lambda$:
\begin{equation}\eqlabel{2.9.1}
\alpha_{\lambda,x}^{-1}(f)=\xi^{(\lambda^{-1})}\# f(c^{(\lambda^{-1})}).
\end{equation}
\end{blanco}

\begin{proposition}\prlabel{2.10}
With notation as in \blref{2.9}, 
$$\alpha=\bigoplus_{\lambda\in \Lambda}\bigoplus_{x\in X}\alpha_{\Lambda,x}:\
B\# \ul{A}\to \Aa$$
is a morphism of algebras graded by $(G,\Lambda,X)$. If every $C_\lambda$ is finitely
generated and projective as a $k$-module, then it is an isomorphism.
\end{proposition}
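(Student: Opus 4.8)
The plan is to show that $\alpha$ is a morphism of $(G,\Lambda,X)$-graded algebras by a direct check on homogeneous components, and then to deduce bijectivity from the finiteness hypothesis. Since $\alpha=\bigoplus_{\lambda\in\Lambda}\bigoplus_{x\in X}\alpha_{\lambda,x}$ and each $\alpha_{\lambda,x}$ maps $C^*_{\lambda^{-1}}\#A_x$ into $\Aa_{\lambda,x}=\Hom(C_{\lambda^{-1}},A_x)$, the map $\alpha$ is $k$-linear and respects the $(\Lambda\times X)$-grading by construction; what must be verified is that $\alpha$ is multiplicative and sends the local units $1_x$ of $B\#\ul{A}$ to the local units of $\Aa$.

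For multiplicativity, I would take $\xi\#a\in C^*_{\lambda^{-1}}\#A_x$ and $\xi'\#a'\in C^*_{(\lambda')^{-1}}\#A_{x'}$. If $x'\neq x\gamma(\lambda')$, then the product in $B\#\ul{A}$ (\blref{2.5}) and the product $\alpha_{\lambda,x}(\xi\#a)\#\alpha_{\lambda',x'}(\xi'\#a')$ in $\Aa$ (\blref{2.7}) are both zero, so there is nothing to prove; hence assume $x'=x\gamma(\lambda')$. Expanding the multiplication of \blref{2.5}, the opposite convolution on $B$ from \blref{1.4ter}, and the left $H$-action $(h\leftactr\xi')(c')=\xi'(c'h)$ of \blref{2.9}, one gets, for all $c\in C_{(\lambda\lambda')^{-1}}$,
$$\alpha_{\lambda\lambda',x'}\bigl((\xi\#a)(\xi'\#a')\bigr)(c)
=\xi\bigl(c_{(2,\lambda^{-1})}\bigr)\,\xi'\bigl(c_{(1,(\lambda')^{-1})}a_{[1,\gamma(\lambda')^{-1}]}\bigr)\,a_{[0,x']}a'.$$
On the other hand, set $f=\alpha_{\lambda,x}(\xi\#a)$ and $g=\alpha_{\lambda',x'}(\xi'\#a')$; since $f(c_{(2,\lambda^{-1})})=\xi(c_{(2,\lambda^{-1})})a$ and $\rho_{x',\gamma(\lambda')^{-1}}(a)=a_{[0,x']}\ot a_{[1,\gamma(\lambda')^{-1}]}$, the formula of \blref{2.7} yields
$$(f\# g)(c)=\xi\bigl(c_{(2,\lambda^{-1})}\bigr)\,a_{[0,x']}\,\xi'\bigl(c_{(1,(\lambda')^{-1})}a_{[1,\gamma(\lambda')^{-1}]}\bigr)\,a'.$$
Collecting the scalars, the two right-hand sides agree, so $\alpha_{\lambda\lambda',x'}\bigl((\xi\#a)(\xi'\#a')\bigr)=f\#g$, and $\alpha$ is multiplicative.

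For the units, $\alpha_{e,x}$ sends the local unit $1\#1_x$ of $B\#\ul{A}$ (see the proof of \prref{2.6}; here $1$ denotes the unit $\varepsilon$ of $B_e=C^*_e$) to the map $c\mapsto\varepsilon(c)1_x$, which is exactly the local unit $e_x$ of $\Aa$ produced in the proof of \prref{2.8}. Hence $\alpha$ is a morphism of $(G,\Lambda,X)$-graded algebras. Finally, if every $C_\lambda$ is finitely generated and projective as a $k$-module, then each $\alpha_{\lambda,x}$ is an isomorphism of $k$-modules with inverse \equref{2.9.1}, so $\alpha$ is bijective; being a bijective morphism of $(G,\Lambda,X)$-graded algebras, its inverse is again grading-preserving, unit-preserving and multiplicative, whence $\alpha$ is an isomorphism.

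The only real work lies in the multiplicativity computation — keeping the two layers of Sweedler indices (those on $C$ and those on $H$) apart and seeing that the opposite convolution on $B$ dovetails with the Koppinen-type product defining $\Aa$ — but this is bookkeeping rather than a genuine obstacle; the unit and bijectivity parts are immediate from the earlier material.
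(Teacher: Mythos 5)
Your proposal is correct and follows essentially the same route as the paper: the same direct computation on homogeneous components showing that the smash product multiplication (with the opposite convolution on $B$ and the action $h\leftactr\xi$) matches the Koppinen product under $\alpha$, the same identification $\alpha_{e,x}(\varepsilon\#1_x)=e_x$ for the local units, and bijectivity from the finite dual bases via \equref{2.9.1}. The only difference is that you spell out the trivially zero case $x'\neq x\gamma(\lambda')$ and the inverse-is-a-morphism remark, which the paper leaves implicit.
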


\begin{proof}
Take $\xi\in B_\lambda$, $\xi'\in B_{\lambda'}$, $a\in A_x$, $a'\in A_{x'}$, and assume that
$x'=x\gamma(\lambda')$. For all $c\in C_{(\lambda\lambda')^{-1}}$, we have
\begin{eqnarray*}
&&\hspace*{-2cm}
\alpha_{\lambda\lambda',x'}\bigl((\xi\# a)(\xi'\# a')\bigr)(c)
=\alpha_{\lambda\lambda',x'}\bigl(
\xi(a_{[1,\gamma(\lambda')^{-1}]}\leftactr \xi')\# (a_{[0,x']}a')\bigr)(c)\\
&=& \xi(c_{(2,\lambda^{-1})})\xi'(c_{(1,(\lambda')^{-1})}a_{[1,\gamma(\lambda')^{-1}]})a_{[0,x']}a'\\
&=&\bigl(\alpha_{\lambda,x}(\xi\# a)\alpha_{\lambda',x'}(\xi'\# a')\bigr)(c).
\end{eqnarray*}
It is also obvious that $\alpha_{e,x}(\varepsilon\# 1_x)=e_x$.
\end{proof}

\section{Modules graded by $(G,\Lambda,X)$-sets}\selabel{3}
\begin{definition}\delabel{3.0}
Let $(G,\Lambda,X)$ be a discrete Doi-Hopf datum, and $A$ a $(G,\Lambda,X)$-graded algebra.
Let $Y$ be a $(G,\Lambda,X)$-set, see \blref{1.2}. A right $A$-module $M$ is graded by
the $(G,\Lambda,X)$-set $Y$ if
$$M=\bigoplus_{y\in Y} M_y$$
with
\begin{equation}\eqlabel{3.1.1}
M_y A_{\lambda,x}\subset \delta_{x,\beta(y\lambda)}M_{y\lambda}
\end{equation}
and 
\begin{equation}\eqlabel{3.1.0}
m1_{\beta(y)}=m,
\end{equation}
for all $m\in M_y$.
\end{definition}

\begin{example}\exlabel{3.0}
Let $Y$ be a $(G,\Lambda,X)$-set. $Z\subset Y$ is a $(G,\Lambda,X)$-subset of $Y$ if
$z\lambda\in Z$, for all $\lambda\in \Lambda$ and $z\in Z$.\\
Now suppose that $M=\oplus_{y\in Y} M_y$ is a right $A$-module graded by the 
 $(G,\Lambda,X)$-set $Y$. Then $N=\oplus_{z\in Z} M_z$ is a right $A$-module graded by the 
 $(G,\Lambda,X)$-set $Z$. Indeed, for all $z\in Z$, $\lambda\in \Lambda$ and $x\in X$, we have
 $$N_zA_{\lambda,x}=M_zA_{\lambda,x}\subset \delta_{x,\beta(z\lambda)}M_{z\lambda}
 =\delta_{x,\beta(z\lambda)}N_{z\lambda}.$$
 \end{example}

\begin{example}\exlabel{3.1}
Recall from \blref{1.2} that $Y=\Lambda\times X$ is a $(G,\Lambda,X)$-set.
Let $A$ be a $(G,\Lambda,X)$-graded algebra; then $A$ viewed as a right $A$-module
is graded by the $(G,\Lambda,X)$-set $\Lambda\times X$. We need to verify that
\begin{equation}\eqlabel{3.1.2}
A_{\lambda,x}A_{\lambda',x'}\subset \delta_{x',\beta((\lambda,x)\lambda')}A_{(\lambda,x)\lambda'}.
\end{equation}
We have seen in \blref{1.2} that $(\lambda,x)\lambda'=(\lambda\lambda',x\gamma(\lambda'))$
and $\beta((\lambda,x)\lambda')=x\gamma(\lambda')$, and then \equref{3.1.2} reduces to
\equref{2.1}. It is also easy to check the unit condition: for $a\in A_{\lambda,x}$, we have
that $a1_{\beta(\lambda,x)}=a1_x=a$.\\
Now fix $x\in X$. Then 
$$Z_x=\{(\lambda,x\gamma(\lambda))~|~\lambda\in \Lambda\}$$
is a $(G,\Lambda,X)$-subset of $\Lambda\times X$. Indeed, for all $\lambda'\in \Lambda$,
$(\lambda,x\gamma(\lambda))\lambda'=(\lambda\lambda',x\gamma(\lambda\lambda'))
\in Z_x$. It follows from \exref{3.0} that
$A^{(x)}=\oplus_{\lambda\in \Lambda} A_{\lambda,x\gamma(\lambda)}$
is a right $A$-module graded by the $G$-set $Z_x$.
\end{example}

Assume now that $X$ is finite; then we know that $A$ is an algebra with unit $1=\sum_{x\in X} 1_x$. 
If $M$ is a right $A$-module graded by a $(G,\Lambda,X)$-set $Y$, then we have for all
$m\in M_y$ that $m1_x=0$ if $x\neq \beta(y)$, hence $m1=\sum_{x\in X} m1_x=m1_{\beta(y)}=m$,
so $M$ is a unital $A$-module. It also follows from \equref{3.1.1} that $M_yA_{\lambda}
\subset M_{y\lambda}$, hence $M$ is a right $A$-module graded by the $\Lambda$-set $Y$. We refer to 
\cite{NastasescuRV90} for a discussion of modules graded by $G$-sets.\\
Conversely, let $M$ be a right $A$-module graded by a $\Lambda$-set $Y$ (which is not
necessarily a $(G,\Lambda,X)$-set). Since $1=\sum_{x\in X} 1_x$, we have, for all $y\in Y$,
$M_y=M_y1=\sum_{x\in X} M_y1_x$.
Let $x\neq x'\in X$, and assume that $m\in M_y1_x\cap M_y1_{x'}$. Then $m=n1_{x'}$ for some
$n\in M$, and $m=m1_x=n1_{x'}1_x=0$. Hence $M_y1_x\cap M_y1_{x'}=\{0\}$ and
\begin{equation}\eqlabel{3.1.3}
M_y=\oplus_{x\in X} M_y1_x.
\end{equation}
If $M$ is graded by a $(G,\Lambda,X)$-set $Y$, then it follows from \equref{3.1.1} that
$M_y1_x=\{0\}$ if $x\neq \beta(y)$, and then we find that $M_y1_{\beta(y)}=M_y$. Hence
at most one direct summand in \equref{3.1.3} is nontrivial.

\begin{proposition}\prlabel{3.2}
Let $A$ be a $(G,\Lambda,X)$-graded algebra, with $X$ finite,
and $Y$ a $(G,\Lambda,X)$-set. For a (unital) right $A$-module $M=\oplus_{y\in Y}M_y$, the
following assertions are equivalent
\begin{itemize}
\item $M$ is graded by the $(G,\Lambda,X)$-set $Y$;
\item $M$ is graded by the $\Lambda$-set $Y$ and $M_y1_x= \delta_{x,\beta(y)}M_y$.
\end{itemize}
\end{proposition}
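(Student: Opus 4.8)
The plan is to prove the two implications separately, relying on the discussion immediately preceding the statement, which already establishes most of the needed facts about unital $A$-modules graded by $\Lambda$-sets when $X$ is finite.

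First I would prove that a module graded by the $(G,\Lambda,X)$-set $Y$ is graded by the $\Lambda$-set $Y$ and satisfies $M_y1_x=\delta_{x,\beta(y)}M_y$. Suppose $M=\oplus_{y\in Y}M_y$ is graded by the $(G,\Lambda,X)$-set $Y$. As noted just before the statement, \equref{3.1.1} gives $M_yA_\lambda=\bigoplus_{x\in X}M_yA_{\lambda,x}\subset M_{y\lambda}$, so $M$ is graded by the underlying $\Lambda$-set $Y$. For the second assertion, apply \equref{3.1.1} with $\lambda=e$: since $(e,x)$ lives in degree $e$ and $\beta(ye)=\beta(y)$, we get $M_yA_{e,x}\subset\delta_{x,\beta(y)}M_y$, hence $M_y1_x\subset M_yA_{e,x}=\{0\}$ whenever $x\neq\beta(y)$. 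Combined with the unit condition \equref{3.1.0}, which says $m=m1_{\beta(y)}$ for all $m\in M_y$, this yields $M_y1_{\beta(y)}=M_y$, i.e. $M_y1_x=\delta_{x,\beta(y)}M_y$.

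Conversely, suppose $M=\oplus_{y\in Y}M_y$ is a unital right $A$-module graded by the $\Lambda$-set $Y$ and $M_y1_x=\delta_{x,\beta(y)}M_y$. The unit condition \equref{3.1.0} is immediate: since $1=\sum_{x\in X}1_x$ and $M$ is unital, $m=m1=\sum_{x\in X}m1_x=m1_{\beta(y)}$ for $m\in M_y$, using $m1_x=0$ for $x\neq\beta(y)$. For \equref{3.1.1}, take $m\in M_y$ and $a\in A_{\lambda,x}$. Because $M$ is graded by the $\Lambda$-set $Y$, $ma\in M_{y\lambda}$. Now write $m=m1_{\beta(y)}$ and use \equref{2.1.2} (valid since $\Lambda$ or $G$ is a group, so $\gamma(\lambda)^{-1}$ exists and $1_z A_{\lambda,x}=A_{\lambda,x}$ when $x=z\gamma(\lambda)$): if $x\neq\beta(y)\gamma(\lambda)=\beta(y\lambda)$, then $1_{\beta(y)}a=0$ by \equref{2.1.5} (from \prref{2.1abis}), so $ma=m1_{\beta(y)}a=0$. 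Hence $M_yA_{\lambda,x}\subset\delta_{x,\beta(y\lambda)}M_{y\lambda}$, which is \equref{3.1.1}.

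There is no real obstacle here; the statement is a bookkeeping consequence of \prref{2.1abis} together with the paragraph preceding it, exactly parallel to the algebra-level equivalence between $(G,\Lambda,X)$-graded algebras and suitably-compatible unital $\Lambda$-graded algebras. The only point requiring a little care is making sure that $\beta(y\lambda)=\beta(y)\gamma(\lambda)$ (the defining property of a $(G,\Lambda,X)$-set, recalled in \blref{1.2}) is invoked at the right place, so that the index $\beta(y\lambda)$ appearing in \equref{3.1.1} matches $x=\beta(y)\gamma(\lambda)$ coming from the idempotent computation with $1_{\beta(y)}$.
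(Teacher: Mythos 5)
Your proof is correct and follows essentially the same route as the paper: the forward direction is the specialization of \equref{3.1.1} to $\lambda=e$ together with the unit condition (which is exactly the discussion the paper places just before the statement), and the converse is the computation $ma=(m1_{\beta(y)})a=m(\delta_{x,\beta(y)\gamma(\lambda)}a)$ combined with $\beta(y)\gamma(\lambda)=\beta(y\lambda)$. The only cosmetic difference is that you route the identity $1_{\beta(y)}a=\delta_{x,\beta(y)\gamma(\lambda)}a$ through \equref{2.1.5} of \prref{2.1abis} (invoking invertibility of $\gamma(\lambda)$), whereas it also follows directly from \equref{2.1} and \equref{2.1.2}; this changes nothing of substance.
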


\begin{proof}
$\ul{1)\Rightarrow 2)}$: see the arguments preceding \prref{3.2}.\\
$\ul{2)\Rightarrow 1)}$. Take $m\in M_y$. If $x\neq \beta(y)$, then $m1_x=0$, hence
$m=m1=\sum_{x\in X} m1_x=
m1_{\beta(y)}$.\\
Take $m\in M_y$ and $a\in A_{\lambda,x}$. Then 
$$ma=(m1_{\beta(y)})a=m(1_{\beta(y)}a)=m(\delta_{x,\beta(y)\gamma(\lambda)}a).$$
If $x\neq \beta(y)\gamma(\lambda)$, then $ma=0$. In any case $ma\in M_{y\lambda}$, so we
conclude that \equref{3.1.1} holds, since $\beta(y)\gamma(\lambda)=\beta(y\lambda)$.
\end{proof}

Now we introduce the category $\Zz_A^{(G,\Lambda,X)}$ of right $A$-modules graded 
by $(G,\Lambda,X)$-sets. The objects are couples $(Y,M)$, where $Y$ is a $(G,\Lambda,X)$-set,
and $M$ is a right $A$-module graded by the $(G,\Lambda,X)$-set $Y$. 
A morphism $(Y,M)\to (Y',M')$ in $\Zz_A^{(G,\Lambda,X)}$ is a couple $(\eta, \varphi)$, where
$\eta:\ Y\to Y'$ is a morphism of $(G,\Lambda,X)$-sets, and
$\varphi:\ M\to M'$ is a right $A$-linear map such that $\varphi(M_y)\subset M'_{\eta(y)}$.
If the condition $\varphi(M_y)\subset M'_{\eta(y)}$ is satisfied, then the condition that 
$\varphi$ is right $A$-linear is equivalent to the commutativity of the diagrams
\begin{equation}\eqlabel{3.2.5}
\xymatrix{
M_y\ot A_{\lambda,\beta(y\lambda)}\ar[d]_{\varphi_y\ot {\rm id}}\ar[rr]&&
M_{y\lambda}\ar[d]^{\varphi_{y\lambda}\hspace{8mm}.}\\
M'_{\eta(y)}\ot A_{\lambda,\beta'(\eta(y)\lambda)}\ar[rr]&&
M'_{\eta(y\lambda)}=M'_{\eta(y)\lambda}}
\end{equation}

$\Tt_A^{(G,\Lambda,X)}$ has the same objects as $\Zz_A^{(G,\Lambda,X)}$.
A morphism $(Y,M)\to (Y',M')$ in $\Tt_A^{(G,\Lambda,X)}$ is a couple $(\eta, (\varphi_{y'})_{y'\in Y'})$, where
$\eta:\ Y'\to Y$ is a morphism of $(G,\Lambda,X)$-sets, and $\varphi_{y'}:\ M_{\eta(y')}\to M'_{y'}$
are $k$-linear maps such that the diagram
\begin{equation}\eqlabel{3.2.6}
\xymatrix{
M_{\eta(y')}\ot A_{\lambda,\beta(\eta(y')\lambda)}\ar[d]_{\varphi_{y'}\ot {\rm id}}\ar[rr]&&
M_{\eta(y')\lambda}=M_{\eta(y'\lambda)}\ar[d]^{\varphi_{y'\lambda}}\\
M_{y'}\ot A_{\lambda,\beta'(y'\lambda)}\ar[rr]&&M'_{y'\lambda}}
\end{equation}
commutes, for all $y'\in Y'$ and $\lambda\in \Lambda$.\\
Observe that these definitions are designed in such a way that we have forgetful functors
$$\Zz_A^{(G,\Lambda,X)}\to \Zz_k~~{\rm and}~~\Tt_A^{(G,\Lambda,X)}\to \Tt_k.$$

\begin{proposition}\prlabel{3.3}
Let $(\ul{H},\ul{A},\ul{C})$ be a Doi-Hopf datum in $\Tt_k$. Then we have fully faithful functors
$$T:\ \Tt_k(\ul{H})_{\ul{A}}^{\ul{C}}\to \Tt_\Aa^{(G,\Lambda,X)}~~{\rm and}~~
Z:\ \Zz_k(\ul{H})_{\ul{A}}^{\ul{C}}\to \Zz_\Aa^{(G,\Lambda,X)}.$$
At the level of objects, the functors are defined in the same way: 
$T(\ul{M})=Z(\ul{M})=(Y, \oplus_{y\in Y}M_y)$, with multiplication maps
$M_y\ot \Aa_{\lambda,\beta(y\lambda)}\to M_{y\lambda}$ given by the formula
\begin{equation}\eqlabel{3.3.1}
mf=m_{[0,y\lambda]}f(m_{[1,\lambda^{-1}]}).
\end{equation}
At the level of morphisms, $T$ and $Z$ are defined by
$$T(\eta,(\varphi_{y'})_{y'\in Y'})=(\eta,(\varphi_{y'})_{y'\in Y'})~~{\rm and}~~
Z(\eta,(\varphi_y)_{y\in Y})=(\eta,\bigoplus_{y\in Y} \varphi_y).$$
\end{proposition}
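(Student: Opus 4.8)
The plan is to verify, in turn, that $T$ and $Z$ are well-defined on objects, that they are well-defined on morphisms, and that they are full and faithful; the two cases run in parallel, so I would treat $T$ in detail and indicate the (easy) changes for $Z$. Throughout I use the dual-basis notation $\xi^{(\lambda^{-1})}\ot c^{(\lambda^{-1})}$ of \blref{1.4ter} together with identity \equref{2.9.2}, and the description of $\Aa$ from \blref{2.7}.

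First I would check that $T(\ul M)$ is genuinely a right $\Aa$-module graded by the $(G,\Lambda,X)$-set $Y$. Write $M=\oplus_{y\in Y}M_y$. One has to show that the formula \equref{3.3.1}, $mf=m_{[0,y\lambda]}f(m_{[1,\lambda^{-1}]})$ for $m\in M_y$ and $f\in\Aa_{\lambda,\beta(y\lambda)}=\Hom(C_{\lambda^{-1}},A_{\beta(y\lambda)})$, lands in $M_{y\lambda}$ (which is clear, since $M_y$ is a right $A_{\beta(y)}$-module with $\beta(y)=\beta(y\lambda)\gamma(\lambda)^{-1}$ and the coaction on $\ul M$ raises the grading appropriately), that the zero-multiplication in other degrees is respected, that the unit $e_{\beta(y)}\in\Aa_{e,\beta(y)}$, $e_{\beta(y)}(c)=\varepsilon(c)1_{\beta(y)}$, acts as identity — this is exactly the counit condition on the coaction $\rho_{y,e}$ — and, crucially, that associativity $(mf)g=m(f\# g)$ holds. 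The associativity computation is the analogue of the proof of \prref{2.8}: expand both sides using \equref{3.3.1} and the Doi-Hopf compatibility \equref{1.4.1} (which governs how the coaction on $\ul M$ interacts with the $A$-action), together with coassociativity of $\rho$ and the module-coalgebra axioms on $\ul C$; both sides reduce to the same triple-Sweedler expression, mirroring the last display in the proof of \prref{2.8}. I expect this associativity verification to be the main obstacle — not because it is deep, but because it is the one place where all the structure (the comodule-algebra coaction on $\ul A$, the module-coalgebra structure on $\ul C$, the entwining hidden in \equref{1.4.1}, and the definition of $\#$ on $\Aa$) has to be marshalled simultaneously, and bookkeeping the indices $\lambda^{-1}$, $\gamma(\lambda')^{-1}$, etc., is delicate.

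Next, for morphisms: given $(\eta,(\varphi_{y'})_{y'\in Y'})$ in $\Tt_k(\ul H)_{\ul A}^{\ul C}$, I would check that each $\varphi_{y'}:M_{\eta(y')}\to M'_{y'}$ is $\Aa_{\beta'(y')}$-linear in the sense of \equref{3.2.6}; this is a direct translation of the colinearity square \equref{1.4.2} together with the $A$-linearity of $\varphi_{y'}$, and functoriality of $T$ is immediate since on morphisms $T$ is the identity. For $Z$ one uses \equref{1.7.1} in place of \equref{1.4.2}; here $Z$ assembles the $\varphi_y$ into a single graded map $\oplus_y\varphi_y$, and one must note that this is right $\Aa$-linear, which again follows by combining $A$-linearity with \equref{1.7.1}.

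Finally, fullness and faithfulness. Faithfulness of $T$ is trivial, since $T$ is the identity on morphisms; faithfulness of $Z$ follows because $\oplus_y\varphi_y=\oplus_y\psi_y$ forces $\varphi_y=\psi_y$ for each $y$ (the $M_y$ are the homogeneous components). For fullness one takes an arbitrary morphism in $\Tt_\Aa^{(G,\Lambda,X)}$ (resp. $\Zz_\Aa^{(G,\Lambda,X)}$), i.e. a morphism of $(G,\Lambda,X)$-sets together with $\Aa$-linear data, and shows it comes from a Doi-Hopf module morphism: the $(G,\Lambda,X)$-set map is exactly the required $\eta$; each component map is automatically $A$-linear (restrict the $\Aa$-action to the degree-$(e,\beta(y))$ part, i.e. to $\Hom(C_e,A_{\beta(y)})\supset A_{\beta(y)}$ via $a\mapsto(c\mapsto\varepsilon(c)a)$); and the colinearity square \equref{1.4.2} (resp. \equref{1.7.1}) is recovered from \equref{3.2.6} (resp. \equref{3.2.5}) by pairing against the dual basis of $C_\lambda$, using \equref{2.9.1} to re-express a general $f\in\Aa_{\lambda,x}$ as $\xi^{(\lambda^{-1})}\#f(c^{(\lambda^{-1})})$ and \equref{3.3.1} to read off $\rho_{y,\lambda}$. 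The same dual-basis pairing shows that the correspondence of morphisms is bijective, completing the proof; note that fullness does \emph{not} require $C_\lambda$ to be finitely generated projective (that hypothesis is only needed later, for the functors to be equivalences), so I would take care to phrase the dual-basis step so that it only uses the evaluation pairing $C_{\lambda^{-1}}\ot C_{\lambda^{-1}}^*\to k$, not the full dual-basis identity.
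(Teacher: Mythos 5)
Your proposal is correct and, for the parts the paper actually writes out --- associativity of the action \equref{3.3.1} via the compatibility \equref{1.4.1} and coassociativity, the unit property via the counit axiom applied to $e_{\beta(y)}$, and the passage of morphisms through the squares \equref{3.2.6} and \equref{3.2.5} using \equref{1.4.2} and \equref{1.7.1} --- it follows the paper's proof step for step. The one genuine difference is that you also prove full faithfulness: the paper's proof only verifies that $T$ and $Z$ are well-defined functors and never returns to the stated claim that they are fully faithful (which is only implicitly justified later through the inverse functors of \thref{3.4}, and there only under the finitely-generated-projective hypothesis). Your argument --- extracting $A_{\beta(y)}$-linearity from the degree-$(e,\beta(y))$ part of the $\Aa$-action via $a\mapsto(c\mapsto\varepsilon(c)a)$, and recovering colinearity by testing the $\Aa$-action against the maps $c\mapsto\xi(c)1_x$ for $\xi\in C_{\lambda^{-1}}^*$ --- is the right one and genuinely improves on the written proof. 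One caveat: to pass from ``$(\mathrm{id}\ot\xi)(u)=(\mathrm{id}\ot\xi)(v)$ for all $\xi\in C_\lambda^*$'' to ``$u=v$'' in $M'_{y'}\ot C_\lambda$ you need the functionals on $C_\lambda$ to separate points of that tensor product; this is automatic when $k$ is a field, but over a general commutative ring (which \blref{1.4} explicitly allows) it requires some hypothesis on $C_\lambda$, so your remark that fullness needs no projectivity assumption should be qualified accordingly rather than asserted outright.
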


\begin{proof}
We will show that the action \equref{3.3.1} is associative and satisfies the unit property.
Take $f\in \Aa_{\lambda,x}$, $f'\in \Aa_{\lambda',x'}$, with $x'=x\gamma(\lambda')$, so that
$f\# f'\in \Aa_{\lambda\lambda',x'}$. Let $m\in M_y$, and take $x=\beta(y)$. Now
\begin{eqnarray*}
(mf)f'&=&(m_{[0,y\lambda]}f(m_{[1,\lambda^{-1}]}))f'\\
&=& m_{[0,x']}f(m_{[2,\lambda^{-1}]})_{[0,x']} f'\bigl(m_{[1,(\lambda')^{-1}]}
f(m_{[2,\lambda^{-1}]})_{[1,\gamma(\lambda')^{-1}]}\bigr)\\
&=& m(f\#f').
\end{eqnarray*}
The unit property is handled as follows. For $m\in M_y$, we have
$$me_{\beta(y)}=m_{[0,y]}e_{\beta(y)}(m_{[1,e]})=m_{[0,y]}\varepsilon(m_{[1,e]})1_{\beta(y)}=m.$$
Now we look at the morphisms. Let $(\eta,(\varphi_{y'})_{y'\in Y'})$ be a morphism $\ul{M}\to
\ul{M}'$ in $\Tt_k(\ul{H})_{\ul{A}}^{\ul{C}}$. We then have to show that it is also a morphism
$(Y,\oplus_{y\in Y}M_y)\to (Y',\oplus_{y'\in Y'}M'_{y'})$ in $\Zz_\Aa^{(G,\Lambda,X)}$.
To this end, it suffices to show that the diagrams \equref{3.2.6} commute. Take
$m\in M_{\eta(y')}$ and $f\in \Aa_{\lambda,\beta(\eta(y')\lambda)}$. Then
\begin{eqnarray*}
&&\hspace*{-2cm}
\varphi_{y'}(m)f=\varphi_{y'}(m)_{[0,y'\lambda]}f(\varphi_{y'}(m)_{[1,\lambda^{-1}]})
\equal{\equref{1.4.2}}\varphi_{y'\lambda}(m_{[0,\eta(y')\lambda]})f(m_{[1,\lambda^{-1}]})\\
&=&\varphi_{y'\lambda}\bigl(m_{[0,\eta(y')\lambda]}f(m_{[1,\lambda^{-1}]})\bigr)=\varphi_{y'\lambda}(mf).
\end{eqnarray*}
Finally, take a morphism $(\eta,(\varphi_y)_{y\in Y}):\ \ul{M}\to
\ul{M}'$ in $\Zz_k(\ul{H})_{\ul{A}}^{\ul{C}}$. We have to show that $(\eta, \oplus_{y\in Y}$
is a morphism in $\Zz_\Aa^{(G,\Lambda,X)}$. To this end, we have to show that \equref{3.2.5} commutes.
For $m\in M_y$ and $f\in \Aa_{\lambda,\beta(y\lambda)}$, we have
\begin{eqnarray*}
&&\hspace*{-2cm}
\varphi_y(m)f=\varphi_y(m)_{[0,\eta(y\lambda)]}f(\varphi_y(m)_{[1,\lambda^{-1}]})
\equal{\equref{1.7.1}}
\varphi_{y\lambda}(m_{[0,y\lambda]})f(m_{[1,\lambda^{-1}]})\\
&=& \varphi_{y\lambda}\bigl(m_{[0,y\lambda]}f(m_{[1,\lambda^{-1}]})\bigr)
=\varphi_{y\lambda}(mf).
\end{eqnarray*}
\end{proof}

\begin{theorem}\thlabel{3.4}
Let $(\ul{H},\ul{A},\ul{C})$ be a Doi-Hopf datum in $\Tt_k$, and assume that every $C_\lambda$
is finitely generated and projective as a $k$-module. Then the functors $T$ and $Z$ from
\prref{3.3} are isomorphisms of categories.
\end{theorem}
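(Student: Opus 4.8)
The plan is to build an explicit inverse and to lean on \prref{3.3}. Since $T$ and $Z$ are already shown there to be fully faithful, and a fully faithful functor that is a bijection on objects is automatically an isomorphism of categories (the inverse on morphisms being forced by full faithfulness), it suffices to prove that the object map $\ul M=(Y,(M_y)_{y\in Y})\mapsto (Y,\bigoplus_{y\in Y}M_y)$ — which is the same for $T$ and for $Z$ — is bijective. I would do this by exhibiting an object-level assignment $(Y,M)\mapsto(Y,(M_y)_{y\in Y})$, checking it lands in $\Tt_k(\ul H)_{\ul A}^{\ul C}$, and checking it is a two-sided inverse of the object map of \prref{3.3}; then the same assignment serves for both $T$ and $Z$.

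Next I would construct this inverse. Given $(Y,M)$ in $\Tt_\Aa^{(G,\Lambda,X)}$, take the homogeneous decomposition $M=\bigoplus_{y\in Y}M_y$. Make each $M_y$ a right $A_{\beta(y)}$-module by $m\cdot a:=m\cdot\widehat a$, where $\widehat a\in\Aa_{e,\beta(y)}=\Hom(C_e,A_{\beta(y)})$ is $\widehat a(c)=\varepsilon(c)a$; using the multiplication of $\Aa$ from \blref{2.7}, the counit axioms of \blref{1.5} show that $a\mapsto\widehat a$ is an algebra map with $\widehat{1_{\beta(y)}}=e_{\beta(y)}$, so this is a genuine unital module structure (and it agrees with the $\Aa_{e,\beta(y)}$-part of the $\Aa$-action). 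For the coaction I use the hypothesis: each $C_\lambda$ being finitely generated projective, it has the (unique) finite dual basis $\xi^{(\lambda)}\ot c^{(\lambda)}\in C_\lambda^*\ot C_\lambda$ of \blref{1.4ter}; since $\Lambda$ is a group, $\alpha_{\lambda^{-1},\beta(y)}(\xi^{(\lambda)}\#1_{\beta(y)})\in\Aa_{\lambda^{-1},\beta(y)}$, and by \equref{3.1.1} the element $n\cdot\alpha_{\lambda^{-1},\beta(y)}(\xi^{(\lambda)}\#1_{\beta(y)})$ lies in $M_y$ for $n\in M_{y\lambda}$. So set $\rho_{y,\lambda}(n):=n\cdot\alpha_{\lambda^{-1},\beta(y)}(\xi^{(\lambda)}\#1_{\beta(y)})\ot c^{(\lambda)}\in M_y\ot C_\lambda$, which is well defined because the finite dual basis is unique.

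Then I would verify that $(Y,(M_y)_{y\in Y})$ with these data is a Doi-Hopf module (coassociativity and counit of $\rho$, and the compatibility \equref{1.4.1}): each reduces, after unfolding $\rho$, $\widehat{\phantom a}$, $\alpha$ and the product of \blref{2.7}, to a manipulation of the Sweedler notations of $\ul C$ and $\ul A$, the module-coalgebra relations of \blref{1.5}, the dual-basis identities, and \equref{2.9.2}, with associativity and unitality of the $\Aa$-action on $M$ used repeatedly. Finally I check the two round trips. Starting from a Doi-Hopf module and transporting along $T$: feeding $\alpha_{\lambda^{-1},\beta(y)}(\xi^{(\lambda)}\#1_{\beta(y)})$ into \equref{3.3.1} and using the unit of $M_y$ and $\xi^{(\lambda)}(n_{[1,\lambda]})c^{(\lambda)}=n_{[1,\lambda]}$ gives back $\rho_{y,\lambda}(n)$, and feeding $\widehat a$ into \equref{3.3.1} gives back $m\cdot a$ by the counit of $\ul A$'s coaction. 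Conversely, starting from $(Y,M)$ in $\Tt_\Aa^{(G,\Lambda,X)}$, I must recover the $\Aa$-action from the reconstructed $\rho$ and $A$-action through \equref{3.3.1}; the crux is the identity, inside $\Aa$,
$$\alpha_{\lambda,\beta(y\lambda)}(\xi^{(\lambda^{-1})}\#1_{\beta(y\lambda)})\ \#\ \alpha_{e,\beta(y\lambda)}(\varepsilon\#f(c^{(\lambda^{-1})}))=f\qquad(f\in\Aa_{\lambda,\beta(y\lambda)}),$$
which follows from the product formula of \blref{2.7} with one factor in degree $e$, from $\rho_{x,e}$ being an algebra map with $\rho_{x,e}(1_x)=1_x\ot 1$, from the counit of $\ul C$, and from $\xi^{(\lambda^{-1})}(c)f(c^{(\lambda^{-1})})=f(c)$; so $mf$ recomputed from the reconstructed data is $m\cdot f$. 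This gives bijectivity on objects.

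The conceptual content here is slight — it is all dictated by \prref{3.3} and the isomorphism $\alpha$ of \prref{2.10}. The main obstacle is bookkeeping: one must carry the $G$-, $\Lambda$- and $X$-gradings and the profusion of Sweedler indices $(i,\lambda)$, $[j,x]$, $[j,\gamma(\lambda)^{-1}]$ through these computations and make sure every $\lambda^{-1}$ and $\gamma(\lambda)^{-1}$ is legitimate (this is where $\Lambda$ being a group is used). The single most delicate point is the compatibility \equref{1.4.1} for the reconstructed module, which interlaces the $A$-action, the reconstructed coaction, and the comodule-algebra and module-coalgebra axioms; in effect it is the transpose of the associativity computation carried out in the proof of \prref{2.8}.
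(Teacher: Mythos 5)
Your proposal matches the paper's proof: the inverse is built on objects by exactly the same formulas ($m\cdot a=m\,\alpha_{e,\beta(y)}(\varepsilon\# a)$ for the action and $\rho_{y,\lambda}(m)=m\,\alpha_{\lambda^{-1},\beta(y)}(\xi^{(\lambda)}\# 1_{\beta(y)})\ot c^{(\lambda)}$ for the coaction), the Doi-Hopf axioms are verified with the same ingredients (dual-basis identities, \equref{2.9.2}, the counit of $\ul{A}$'s coaction), and the two round trips are the same computations. Your appeal to full faithfulness from \prref{3.3} to dispose of the morphism level is a tidy shortcut for what the paper simply leaves to the reader.
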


\begin{proof}
We will construct a functor ${G}:\ \Tt_\Aa^{(G,\Lambda,X)}\to \Tt_k(\ul{H})_{\ul{A}}^{\ul{C}}$ and show
that it is the inverse of $T$. Take $(Y,M)\in \Mm_\Aa^{(G,\Lambda,X)}$. Let
$G(M)=(Y,(M_y)_{y\in Y})$, with structure described as below.\\
a) $M_y$ is a right $A_{\beta(y)}$-module:
$ma=m\alpha_{e,\beta(y)}(\varepsilon\# a)$,
for $m\in M_y$, $a\in A_{\beta(y)}$. Let $x=\beta(y)$. It is straightforward to see that this action
is associative.\\
b) Coaction maps $\rho_{y,\lambda}:\ M_{y\lambda}\to M_y\ot C_{\lambda}$ are defined as follows:
$$\rho_{y,\lambda}(m)=m\alpha_{\lambda^{-1},\beta(y)}(\xi^{(\lambda)}\# 1_{\beta(y)})\ot c^{(\lambda)},$$
where we use the notation introduced in \blref{1.4ter}. We have to show that this coaction is coassociative.
For $m\in M_{y\lambda\lambda'}$, we have that
\begin{eqnarray*}
&&\hspace*{-15mm}
(\rho_{y,\lambda}\ot C_{\lambda'})(\rho_{y\lambda,\lambda'}(m))\\
&=&(\rho_{y,\lambda}\ot C_{\lambda'})
\bigl(m\alpha_{(\lambda')^{-1},\beta(y\lambda)}(\xi^{(\lambda')}\# 1_{\beta(y\lambda)})\bigr)\ot c^{(\lambda')}\\
&=& m\alpha_{(\lambda')^{-1},\beta(y\lambda)} (\xi^{(\lambda')}\# 1_{\beta(y\lambda)})
\alpha_{\lambda^{-1},\beta(y)}(\xi^{(\lambda)}\#1_{\beta(y)})\ot c^{(\lambda)}\ot c^{(\lambda')}\\
&=&m\alpha_{(\lambda\lambda')^{-1},\beta(y)}(\xi^{(\lambda')}\xi^{(\lambda)}\# 1_{\beta(y)})
\ot c^{(\lambda)}\ot c^{(\lambda')};\\
&&\hspace*{-15mm}
(M_y\ot C_{\lambda,\lambda'})(\rho_{y,\lambda\lambda'}(m))\\
&=&m\alpha_{(\lambda\lambda')^{-1},\beta(y)}(\xi^{(\lambda\lambda')}\# 1_{\beta(y)})\ot \Delta_{\lambda,\lambda'}(c^{(\lambda\lambda')}).
\end{eqnarray*}
These expressions are equal since
\begin{eqnarray*}
&&\hspace*{-2cm}
\xi^{(\lambda')}\xi^{(\lambda)} \ot c^{(\lambda)}\ot c^{(\lambda')}
=(\xi^{(\lambda')}\xi^{(\lambda)} )(c^{(\lambda\lambda')})\xi^{(\lambda\lambda')}
\ot c^{(\lambda)}\ot c^{(\lambda')}\\
&=& \xi^{(\lambda')}(c^{(\lambda\lambda')}_{(2,\lambda')})\xi^{(\lambda)}(c^{(\lambda\lambda')}_{(1,\lambda)})
\xi^{(\lambda\lambda')}\ot c^{(\lambda)}\ot c^{(\lambda')}\\
&=&\xi^{(\lambda\lambda')}\ot 
 \xi^{(\lambda)}(c^{(\lambda\lambda')}_{(1,\lambda)})
c^{(\lambda)}
\ot \xi^{(\lambda')}(c^{(\lambda\lambda')}_{(2,\lambda')})c^{(\lambda')}\\
&=& \xi^{(\lambda\lambda')}\ot c^{(\lambda\lambda')}_{(1,\lambda)}\ot c^{(\lambda\lambda')}_{(2,\lambda')}
=\xi^{(\lambda\lambda')}\ot \Delta_{\lambda,\lambda'}(c^{(\lambda\lambda')}).
\end{eqnarray*}
Let us prove that the counit property holds. For $m\in M_y$, we have
$$(M_y\ot \varepsilon)\rho_{y,e}(m)=
m\alpha_{e,\beta(y)}(\xi^{(e)}\# 1_{\beta(y)})\varepsilon(c^{(e)})=
m\alpha_{e,\beta(y)}(\varepsilon \# 1_{\beta(y)})=m.$$
Finally, we need to prove that the action and coaction on $\ul{M}$ are compatible, that is,
$$\rho_{y,\lambda}(ma)=m_{[0,y]}a_{[0,\beta(y)]}\ot m_{[1,\lambda]}a_{[1,\gamma(\lambda)]},$$
for $m\in M_{y\lambda}$ and $a\in A_{\beta(y\lambda)}$.
\begin{eqnarray*}
&&\hspace*{-2cm}
\rho_{y,\lambda}(ma)=ma \alpha_{\lambda^{-1},\beta(y)}(\xi^{(\lambda)}\# 1_{\beta(y)})\ot c^{(\lambda)}\\
&=&
m\alpha_{e,\beta(y\lambda)}(\varepsilon\# a)\alpha_{\lambda^{-1},\beta(y)}(\xi^{(\lambda)}\# 1_{\beta(y)})\ot c^{(\lambda)}\\
&=&
m\alpha_{\lambda^{-1},\beta(y)}\bigl((\varepsilon\# a)(\xi^{(\lambda)}\# 1_{\beta(y)})\bigr)\ot c^{(\lambda)}\\
&=&
m\alpha_{\lambda^{-1},\beta(y)}\bigl( (a_{[1,\gamma(\lambda)]}\leftactr \xi^{(\lambda)})
\# a_{[0,\beta(y)]}\bigr)\ot c^{(\lambda)}\\
&\equal{\equref{2.9.2}}&
m\alpha_{\lambda^{-1},\beta(y)}\bigl( \xi^{(\lambda)}
\# a_{[0,\beta(y)]}\bigr)\ot c^{(\lambda)}a_{[1,\gamma(\lambda)]}\\
&=& m\alpha_{\lambda^{-1},\beta(y)}(\xi^{(\lambda)}\# 1_{\beta(y)})\alpha_{e,\beta(y)}
(\varepsilon\# a_{[0,\beta(y)]})\ot c^{(\lambda)}a_{[1,\gamma(\lambda)]}\\
&=&m_{[0,y]}a_{[0,\beta(y)]}\ot m_{[1,\lambda]}a_{[1,\gamma(\lambda)]}.
\end{eqnarray*}
Let us now show that $T$ and $G$ are inverses. Take $(Y,(M_y)_{y\in Y})\in \Tt_k(\ul{H})_{\ul{A}}^{\ul{C}}$.
Then $GT(Y,(M_y)_{y\in Y})=(Y,(M_y)_{y\in Y})$, where every $M_y$ is a right $A_{\beta(y)}$-module;
this new action is denoted $\cdot$, and we prove that it coincides with the original one:
for $m\in M_y$ and $a\in A_{\beta(y)}$, we have
$$
m\cdot a=m\alpha_{e,\beta(y)}(\varepsilon\# a)=
m_{[0,y]}(\alpha_{e,\beta(y)}(\varepsilon\# a))(m_{[1,e]})=
m_{[0,y]}\varepsilon(m_{[1,e]})a=ma.$$
We also have to show that the coaction maps $\tilde{\rho}_{y,\lambda}$ on $GT(Y,(M_y)_{y\in Y})$
coincide with the original ${\rho}_{y,\lambda}$ on $(Y,(M_y)_{y\in Y})$. For all $m\in M_{y\lambda}$,
we have
\begin{eqnarray*}
\tilde{\rho}_{y,\lambda}(m)&=&
m\alpha_{\lambda^{-1},\beta(y)}(\xi^{(\lambda)}\# 1_{\beta(y)})\ot c^{(\lambda)}\\
&=& m_{[0,y]}\alpha_{\lambda^{-1},\beta(y)}(\xi^{(\lambda)}\# 1_{\beta(y)})(m_{[1,\lambda]})\ot c^{(\lambda)}\\
&=& m_{[0,y]} \xi^{(\lambda)}(m_{[1,\lambda]}) 1_{\beta(y)}\ot c^{(\lambda)}
=m_{[0,y]}\ot m_{[1,\lambda]}=\rho_{y,\lambda}(m).
\end{eqnarray*}
Now let $(Y,M)\in \Tt_\Aa^{(G,\Lambda,X)}$. Then $TG(Y,M)=(Y,M)$, with new right $\Aa$-action
denoted $\cdot$. In order to show that this new action coincides with the original one, it suffices to
show that $m\cdot f=mf$, for all $m\in M_y$ and $f\in \Aa_{\lambda,\beta(y\lambda)}$ of the form
$$f=\alpha_{\lambda,\beta(y\lambda)}(\xi\# a),$$
where $\xi\in C^*_{\lambda^{-1}}$ and $a\in A_{\beta(y\lambda)}$.
\begin{eqnarray*}
&&\hspace*{-10mm} m\cdot f= m_{[0,y\lambda]}f(m_{[1,\lambda^{-1}]})
= m\alpha_{\lambda,\beta(y\lambda)}(\xi^{(\lambda^{-1})}\# 1_{\beta(y)}) f(c^{(\lambda^{-1})})\\
&=& m\alpha_{\lambda,\beta(y\lambda)}(\xi^{(\lambda^{-1})}\# 1_{\beta(y)}) \xi(c^{(\lambda^{-1})})a
= m\alpha_{\lambda,\beta(y\lambda)}(\xi\# 1_{\beta(y)})\alpha_{e,\beta(y\lambda)}(\varepsilon\# a)\\
&=& m\alpha_{\lambda,\beta(y\lambda)}(\xi\# a)=mf.
\end{eqnarray*}
It is left to the reader to show the result at the level of morphisms. The inverse $H$ of $Z$ is constructed
in a similar way.
\end{proof}

\section{Yetter-Drinfeld modules and the Drinfeld double}\selabel{4}
\begin{blanco}\bllabel{4.1} {\bf Crossed $G$-sets.}
Let $G$ be a group. Recall that a right crossed $G$-set is a $G$-set $V$ together with a map
$\nu:\ V\to G$ such that
$$\nu(vg)=g^{-1}\nu(v)g=\nu(v)^{g},$$
for all $v\in V$ and $g\in G$. This notion goes back to Whitehead, and it can be reformulated as
follows. Observe first that $G$ is a right $G\times G$-set, with action
$$l\cdot (g,g')=g^{-1}lg'.$$
The diagonal map $\gamma:\ G\to G\times G$ is clearly a morphism of monoids. Hence
$\GG=(G\times G,G,G)$ is a discrete Doi-Hopf datum. Then it is easy to see that a right crossed
$G$-set is the same thing as a $\GG$-set.
The category $\Xx_G^G$ of right crossed $G$-modules is a braided monoidal category:
for two crossed $G$-modules $(V,\nu)$ and $(V,\nu')$, $(V\times V',\omega)$, with
$\omega(v,v')=\nu(v)\nu'(v')$, and $(v,v')g=(vg,v'g)$ is again a crossed $G$-set. The unit object
is the singleton $\{*\}$, as a trivial right $G$-set, together with the map sending $*$ to
the unit element $e\in G$.\\
The braiding $c_{V,V'}: V\times V'\to V'\times V$ and its inverse are given by the following formulas:
$$c_{V,V'}(v,v')=(v',v\nu'(v'))~~;~~c_{V,V'}^{-1}(v',v)=(v\nu'(v')^{-1},v').$$
This can be verified directly, see \cite{FY} or \cite[XIII.1.4]{k}. 
It is also a consequence of the (folklore) fact that the category of
crossed $G$-sets can be obtained from the category of $G$-sets using the centre
construction, see \cite[Sec. 4]{cl} for a detailed explanation.
\end{blanco}

\begin{blanco}\bllabel{4.2a} {\bf Hopf group coalgebras.} 
Recall that a Hopf group coalgebra is a semi-Hopf group coalgebra $\ul{H}$ (as in \blref{1.5}),
such that the underlying monoid $G$ is a group, together with maps
$S_g,~\ol{S}_g:\ H_{g^{-1}}\to H_g$ $(g\in G)$ such that
\begin{eqnarray*}
S_g(h_{(1,g^{-1})})h_{(2,g)}&=& h_{(1,g)}S_g(h_{(2,g^{-1})})=\varepsilon(h)1_g,\\
h_{(2,g)}\ol{S}_g(h_{(1,g^{-1})})&=&\ol{S}_g(h_{(2,g^{-1})})h_{(1,g)}=\varepsilon(h)1_g,
\end{eqnarray*}
for all $g\in G$ and $h\in H_e$. The $S_g$ are called the antipode maps, while the
$\ol{S}_g$ are called the twisted antipode maps. The $\ol{S}_g$ are then the antipode maps
of the opposite Hopf group coalgebra $\ul{H}^{\rm op}$, which is defined as follows:
$H^{\rm op}_g=H_g$, with opposite multiplication, and $\Delta^{\rm op}_{g,g'}=\Delta_{g,g'}$. 
For all $g\in G$, $\ol{S}_g$ is the inverse of $S_{g^{-1}}$ and, according to \cite{vire}, they 
always exist in the case when each $H_g$ is finite dimensional ($G$ is arbitrary).\\
\end{blanco}

\begin{blanco}\bllabel{4.2} {\bf Yetter-Drinfeld modules.}
Let $\ul{H}$ be a semi-Hopf group coalgebra. Right-right $\ul{H}$-Yetter-Drinfeld modules were
introduced in \cite[Def. 4.4]{cl}. We recall this definition in the special case where
$\ul{H}$ is a Hopf group coalgebra. We need an object $\ul{M}=(V,(M_v)_{v\in V})\in \Tt_k$,
with $V$ a crossed right $G$-set ($G$ a group), together with the following structure:
\begin{itemize}
\item every $M_v$ is a right $H_{\nu(v)}$-module;
\item $\ul{M}$ is a right $\ul{H}$-comodule, with coaction maps $\rho_{v,g}:\ M_{vg}\to M_v\ot H_g$.
\end{itemize}
The following compatibility condition has to be satisfied
\begin{equation}\eqlabel{4.2.1}
\rho_{v,g}(mh)=m_{[0,v]}h_{(2,\nu(v))}\ot S_g(h_{(1,g^{-1})})m_{[1,g]}h_{(3,g)},
\end{equation}
for all $m\in M_{vg}$ and $h\in H_{\nu(vg)}=H_{g^{-1}\nu(v)g}$. 
$\YDT^{\ul{H}}_{\ul{H}}$ is the category of right-right $\ul{H}$-Yetter-Drinfeld modules and morphisms
that are morphisms in $\Tt^{\ul{H}}$ and $\Tt_{\ul{H}}$.\\
The category $\YDZ^{\ul{H}}_{\ul{H}}$ is introduced in a similar way; the objects coincide with the
objects of $\YDT^{\ul{H}}_{\ul{H}}$,
and the morphisms have
to be morphisms $\Zz^{\ul{H}}$ and $\Zz_{\ul{H}}$.
\end{blanco}

\begin{blanco}\bllabel{4.3} {\bf A Doi-Hopf datum.}
Let $\ul{H}$ be a Hopf group coalgebra. Then $\ul{H}^{\rm op}\ot \ul{H}$ is also a Hopf group coalgebra.
Then $\ul{H}$ is a right $\ul{H}^{\rm op}\ot \ul{H}$-comodule algebra, with structure maps
$$\rho_{l,(g,g')}:\ H_{g^{-1}lg'}\to H_l\ot (H_g\ot H_{g'})$$
given by
$$\rho_{l,(g,g')}(h)=h_{(2,l)}\ot S_g(h_{(1,g^{-1})})\ot h_{(3,g')}.$$
A technical but straightforward computation shows that the coassociativity and counit properties
hold.\\
$\ul{H}$ is a right $\ul{H}^{\rm op}\ot\ul{H}$-module coalgebra. Indeed, for $g\in G$, $\gamma(g)=
g\ot g$ and $H_g$ is a right $H^{\rm op}_g\ot H_g$-module, with action
$k(h\ot h')=hkh'$.\\
We conclude that $(\ul{H}^{\rm op}\ot\ul{H},\ul{H},\ul{H})$ is a Doi-Hopf datum in $\Tt_k$.
\end{blanco}

\begin{proposition}\prlabel{4.4}
For a Hopf group coalgebra $\ul{H}$, the categories $\YDT^{\ul{H}}_{\ul{H}}$
(resp. $\YDZ^{\ul{H}}_{\ul{H}}$) and
$\Tt_k(\ul{H}^{\rm op}\ot\ul{H})^{\ul{H}}_{\ul{H}}$ (resp. $\Zz_k(\ul{H}^{\rm op}\ot\ul{H})^{\ul{H}}_{\ul{H}}$)
 are isomorphic.
\end{proposition}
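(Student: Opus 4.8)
The plan is to prove the isomorphism directly: after unravelling the definitions, the two categories carry literally the same objects and the same morphisms, so the identity functor is the desired isomorphism and no auxiliary construction is needed. I would spell out the $\Tt$-version; the $\Zz$-version is obtained verbatim by replacing \blref{1.6} with \blref{1.7} throughout. The first step is to record that the discrete Doi-Hopf datum underlying $(\ul{H}^{\rm op}\ot\ul{H},\ul{H},\ul{H})$ is $\GG=(G\times G,G,G)$ from \blref{4.1}, so that, by \blref{1.2}, a $\GG$-set is precisely a right crossed $G$-set and a morphism of $\GG$-sets is precisely a morphism of crossed $G$-sets.

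Next I would take $\ul{M}\in\Tt_k(\ul{H}^{\rm op}\ot\ul{H})^{\ul{H}}_{\ul{H}}$ and unpack \blref{1.6} with $\ul{A}=\ul{C}=\ul{H}$, $\beta=\nu$, and $\gamma:\ G\to G\times G$ the diagonal map: the data is then a crossed $G$-set $V$, a right $H_{\nu(v)}$-module $M_v$ for each $v\in V$, and coassociative, counital coaction maps $\rho_{v,g}:\ M_{vg}\to M_v\ot H_g$ --- which is exactly the data of a right-right $\ul{H}$-Yetter-Drinfeld module in \blref{4.2}. What remains is to identify the two compatibility conditions, and this is the computational core of the argument: one substitutes the explicit $\ul{H}^{\rm op}\ot\ul{H}$-comodule algebra structure of $\ul{H}$ from \blref{4.3}, namely $a_{[0,\nu(v)]}\ot a_{[1,(g,g)]}=a_{(2,\nu(v))}\ot S_g(a_{(1,g^{-1})})\ot a_{(3,g)}$ for $a\in H_{\nu(vg)}=H_{g^{-1}\nu(v)g}$, together with the $\ul{H}^{\rm op}\ot\ul{H}$-module coalgebra action $k\cdot(h\ot h')=hkh'$ on $H_g$, into the Doi-Hopf compatibility \equref{1.4.1} taken at $\lambda=g$, $x=\nu(v)$ (so $\gamma(\lambda)=(g,g)$). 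A short rearrangement then turns \equref{1.4.1} into exactly the Yetter-Drinfeld identity \equref{4.2.1}, and since every step is invertible the two categories have the same objects. I expect the bookkeeping in this substitution to be the main --- essentially the only --- obstacle: one must be scrupulous about which Sweedler leg of the coaction in \blref{4.3} carries $S_g$ and about the left/right placement forced by $H_g^{\rm op}\ot H_g$ acting on $H_g$ via $hkh'$; it is exactly this that makes \equref{1.4.1} collapse onto \equref{4.2.1} rather than onto an opposite or twisted variant.

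Finally I would compare the morphisms. By \blref{1.6}, a morphism $\ul{M}\to\ul{M}'$ in $\Tt_k(\ul{H}^{\rm op}\ot\ul{H})^{\ul{H}}_{\ul{H}}$ is a couple $(\eta,(\varphi_{v'})_{v'\in V'})$ with $\eta:\ V'\to V$ a morphism of crossed $G$-sets, each $\varphi_{v'}:\ M_{\eta(v')}\to M'_{v'}$ right $H_{\nu'(v')}$-linear, and diagram \equref{1.4.2} commuting; but \equref{1.4.2} says precisely that this couple is a morphism in $\Tt^{\ul{H}}$, while the crossed-$G$-set and $H$-linearity conditions say precisely that it is a morphism in $\Tt_{\ul{H}}$, and together this is the definition of a morphism of Yetter-Drinfeld modules in \blref{4.2}. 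Since composition and identities are defined in the same way on both sides, the identity functor is an isomorphism $\YDT^{\ul{H}}_{\ul{H}}\cong\Tt_k(\ul{H}^{\rm op}\ot\ul{H})^{\ul{H}}_{\ul{H}}$, and the identical argument with \blref{1.7} in place of \blref{1.6} yields $\YDZ^{\ul{H}}_{\ul{H}}\cong\Zz_k(\ul{H}^{\rm op}\ot\ul{H})^{\ul{H}}_{\ul{H}}$.
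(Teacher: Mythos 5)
Your proposal is correct and follows essentially the same route as the paper: both unpack the Doi-Hopf data for $(\ul{H}^{\rm op}\ot\ul{H},\ul{H},\ul{H})$, substitute the explicit coaction $\rho_{\nu(v),(g,g)}(h)=h_{(2,\nu(v))}\ot(S_g(h_{(1,g^{-1})})\ot h_{(3,g)})$ and the action $k(h\ot h')=hkh'$ into \equref{1.4.1}, and observe that it collapses onto \equref{4.2.1}, with the identity functor giving the isomorphism. You are somewhat more explicit than the paper on the identification of $\GG$-sets with crossed $G$-sets and on the morphism comparison, which the paper leaves to the reader.
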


\begin{proof}
Objects in $\Tt_k(\ul{H}^{\rm op}\ot\ul{H})^{\ul{H}}_{\ul{H}}$ and $\YDT^{\ul{H}}_{\ul{H}}$
are objects $\ul{M}\in \Tt_k$ with a right $\ul{H}$-action and a right $\ul{H}$-coaction.
We have to show that the compatibility relations in both categories are the same.\\
Let $\ul{M}=(V,(M_v)_{v\in V})\in \Tt_k$, and assume that $V$ is a right crossed $G$-set,
$M_v$ is a right $H_{\nu(v)}$-module, for all $v\in V$, and $\ul{\rho}:\ \ul{M}\to
\ul{M}\ot \ul{H}$ is a right $\ul{H}$-coaction. Then we have maps $\rho_{v,g}:\
M_{vg}\to M_v\ot H_g$. The compatibility relation \equref{1.4.1} now takes the following
form: for all $m\in M_{vg}$ and $h\in H_{\nu(vg)}$, we have
\begin{equation}\eqlabel{4.4.1}
\rho_{v,g}(mh)=m_{[0,v]}h_{[0,\nu(v)]}\ot m_{[1,g]}h_{[1,\gamma(g)]}.
\end{equation}
Now
$$\rho_{\nu(v),\gamma(g)}(h)=h_{(2,\nu(v))}\ot(S_g(h_{(1,g^{-1})})\ot h_{(3,g)}),$$
so \equref{4.4.1} is equivalent to \equref{4.2.1}, as needed. The statement at the level
of morphisms is left to the reader.
\end{proof}

\begin{blanco}\bllabel{4.5}
Now assume that $H_\lambda$ is finitely generated and projective as a $k$-module, for
every $\lambda\in G$. Combining \prref{4.4} and \thref{3.4}, we find a $\GG$-graded
algebra $D(\ul{H})$ such that the categories $\YDT^{\ul{H}}_{\ul{H}}$ and
$\Tt^\GG_{D(\ul{H})}$, resp. $\YDZ^{\ul{H}}_{\ul{H}}$ and
$\Zz^\GG_{D(\ul{H})}$,
are isomorphic. $D(\ul{H})$ is called the Drinfeld double of 
$\ul{H}$, and can be described in two isomorphic ways: as a smash product or as
a Koppinen smash product. A straightforward computation based on our previous results
leads to these constructions.\\

\ul{Smash product.} 
$$D(\ul{H})=\bigoplus_{\lambda\in G}\bigoplus_{g\in G} H^*_{\lambda^{-1}}\# H_g.$$
We describe the multiplication on $D(\ul{H})$. Let $\xi\in H^*_{\lambda^{-1}}$,
$\xi'\in H^*_{\lambda'^{-1}}$, $h\in H_g$ and $h'\in H_{g'}$. Also assume that
$g'=g\gamma(\lambda')=\lambda'^{-1}g\lambda'=g^{\lambda'}$.\\
First recall the following notation. For $k,k',l\in H_{\lambda^{-1}}$, $k\leftactr \xi\rightactr k'\in
H^*_{\lambda^{-1}}$ is defined by
$$(k\leftactr \xi\rightactr k')(l)=\xi(k'lk).$$
Then
$$(\xi\# h)(\xi'\# h')=\xi(h_{(3, \lambda'^{-1})}\leftactr \xi'\rightactr S_{\lambda'^{-1}}(h_{(1, \lambda')}))
\# h_{(2,g')}h'.$$

\ul{Koppinen smash product.} 
$$D(\ul{H})=\bigoplus_{\lambda\in G}\bigoplus_{g\in G} \Hom(H_{\lambda^{-1}},H_g).$$
Take $f:\ H_{\lambda^{-1}}\to H_g$, $f':\ H_{\lambda'^{-1}}\to H_{g'}$, with $g'=g^{\lambda'}$.
Then $f\# f':\ H_{(\lambda\lambda')^{-1}}\to H_{g'}$ is defined by
\begin{eqnarray*}
&&\hspace*{-9mm}
(f\#f')(h)
=
f(h_{(2,\lambda^{-1})})_{(2,g')}\\
&&
f'\Bigl(S_{\lambda'^{-1}}\bigl(f(h_{(2,\lambda^{-1})})_{(1,\lambda')}\bigr)
h_{(1,\lambda'^{-1})}f(h_{(2,\lambda^{-1})})_{(3,(\lambda')^{-1})}\Bigr).
\end{eqnarray*}
\end{blanco}

\section{$\GG$-graded bialgebras}\selabel{5}
\begin{definition}\delabel{5.2}
Let $A=\oplus_{\lambda,g\in G}A_{\lambda,g}$ be a $\GG$-graded algebra. We call
$A$ a $\GG$-graded bialgebra if we have the following additional 
structure on $A$: for every $\lambda\in G$, $(G,(A_{\lambda,g})_{g\in G})$ is a
semi-Hopf group coalgebra, with structure maps
$$\Delta_{\lambda,g,g_1}:\ A_{\lambda,gg_1}\to A_{\lambda,g}\ot A_{\lambda,g_1}~~;~~
\varepsilon_\lambda:\ A_{\lambda,e}\to k,$$
such that the following
compatibility conditions hold:
\begin{equation}\eqlabel{5.2.1}
\Delta_{\lambda\lambda',g^{\lambda'},g_1^{\lambda'}}(aa')=
a_{(1,g)}a'_{(1,g^{\lambda'})}\ot a_{(2,g_1)}a'_{(2,g_1^{\lambda'})},
\end{equation}
for all $a\in A_{\lambda,gg_1}$, $a'\in A_{\lambda',g^{\lambda'}g_1^{\lambda'}}$;
\begin{equation}\eqlabel{5.2.2}
\varepsilon_{\lambda\lambda'}(aa')=\varepsilon_\lambda(a)\varepsilon_{\lambda'}(a'),
\end{equation}
for all $a\in A_{\lambda,e}$, $a'\in A_{\lambda',e}$;
\begin{equation}\eqlabel{5.2.3}
\Delta_{e,gg_1}(1_{gg_1})=1_g\ot 1_{g_1};
\end{equation}
\begin{equation}\eqlabel{5.2.4}
\varepsilon_e(1_e)=1.
\end{equation}
\end{definition}

\deref{5.2} has a monoidal justification, similar to the monoidal justification of the definition of
a bialgebra. Let $\Cc$ be the category with objects of the form $(V,M=\oplus_{v\in V} M_v)$,
with $V$ a crossed $G$-set, and
every $M_v$ a $k$-module. A morphism $(V,M)\to (V',M')$ in $\Cc$ is
a couple $(\eta,f)$, where $\eta:\ V\to V'$ is a morphism of crossed $G$-sets and $f:M\to M'$ is
a $k$-linear map such that $f(M_v)\subset M'_{\eta(v)}$.
Now $\Cc$ is a monoidal category. The tensor product is defined as follows
$$(V,M)\ot (V',M')=(V\times V',M\ot M'),$$
with $M\ot M'=\oplus_{(v,v')\in V\times V'} M_v\ot M_{v'}$. The unit object is
$(\{*\},k)$.\\
Now let $A$ be a $\GG$-graded algebra, and consider the forgetful functor
$$U:\  \Zz_A^{\GG}\to \Cc.$$

\begin{theorem}\thlabel{5.3}
Let $G$ be a group, and $A$ a $\GG$-graded algebra. We have a bijective correspondence between
\begin{itemize}
\item monoidal structures on $\Zz_A^{\GG}$ such that the forgetful functor $U$ is
strictly monoidal;
\item $\GG$-graded bialgebra structures on $A$.
\end{itemize}
\end{theorem}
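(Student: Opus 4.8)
The plan is to establish the correspondence by the usual ``Tannakian'' argument: a monoidal structure on $\Zz_A^{\GG}$ for which $U$ is strictly monoidal is completely determined by how $A$-module structures combine on tensor products of the underlying objects, and this data is precisely a bialgebra-type structure on $A$. More concretely, I would proceed as follows. First I would observe that $A$ itself, viewed as a right $A$-module graded by the $\GG$-set $\Lambda\times X=G\times G$ (see \exref{3.1}), is an object of $\Zz_A^{\GG}$, and more usefully each $A^{(g)}=\oplus_{\lambda\in G}A_{\lambda,g}$ is an object of $\Zz_A^{\GG}$ graded by the crossed $G$-subset $Z_g=\{(\lambda,g\gamma(\lambda)^{-1})\}\cong G$ (after reindexing; here one must be careful with the left/right conventions of \blref{2.2} and \blref{4.1}). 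Since $U$ is required to be strictly monoidal, the tensor product $A^{(g)}\ot A^{(g_1)}$ in $\Zz_A^{\GG}$ has underlying object $A_g\ot A_{g_1}$ (in the notation of \blref{4.1}, graded by the product crossed $G$-set), and its right $A$-action, restricted appropriately, yields $k$-linear maps which I will show are exactly the comultiplications $\Delta_{\lambda,g,g_1}:A_{\lambda,gg_1}\to A_{\lambda,g}\ot A_{\lambda,g_1}$; the unit object $(\{*\},k)$ being an $A$-module gives the counit maps $\varepsilon_\lambda:A_{\lambda,e}\to k$.

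Next I would verify, in each direction, the list of axioms. Given a monoidal structure on $\Zz_A^{\GG}$ with $U$ strict monoidal, the associativity constraint (which must be the identity on underlying objects since $U$ is strict) forces coassociativity of the $\Delta_{\lambda,g,g_1}$; the unit constraints force the counit property; the fact that the tensor product of $A$-linear maps is $A$-linear, applied to the multiplication $A\ot A\to A$ of the comodule-algebra-type structure, forces the compatibility \equref{5.2.1}; \equref{5.2.2} comes from compatibility of the counit with multiplication; and \equref{5.2.3}--\equref{5.2.4} come from the requirement that the units $1_g$ and the counit behave correctly on the unit object $(\{*\},k)$, i.e.\ that $\Delta$ and $\varepsilon$ are algebra maps in the graded sense. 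Conversely, given a $\GG$-graded bialgebra structure on $A$, I would define the tensor product of $(V,M),(V',M')\in\Zz_A^{\GG}$ to have underlying object $(V\times V', M\ot M')$ with $A$-action $(m\ot m')\cdot a = \sum m a_{(1,-)}\ot m' a_{(2,-)}$, using the comultiplications of $A$ and the entwining-type bookkeeping of the $\GG$-grading, and check that this is well-defined (lands in the right graded piece), associative, and unital with unit object $(\{*\},k)$ acted on via $\varepsilon$; conditions \equref{5.2.1}--\equref{5.2.4} are exactly what is needed for these checks, and one verifies directly that $U$ is then strictly monoidal and that the two constructions are mutually inverse.

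The main obstacle, as usual in this circle of ideas, is bookkeeping rather than conceptual difficulty: one must be scrupulous about which $G$-set indexes each module, how the two copies of $G$ in $\GG=(G\times G,G,G)$ interact with the crossed $G$-set structure and the twist $g^{\lambda'}=\lambda'^{-1}g\lambda'$, and about the left/right and source/target conventions fixed in \blref{2.1}, \blref{2.2} and \blref{4.1}. In particular, showing that the proposed $A$-action on $M\ot M'$ respects the grading by $V\times V'$ requires unwinding $\beta((v,v')\lambda)$ and $A_{\lambda,x}$-degrees simultaneously, and this is where \equref{5.2.1} is used in an essential way; similarly the verification that the associativity constraint being the identity is equivalent to coassociativity of $\Delta$ is where one sees why the forgetful functor must be \emph{strictly} monoidal for the correspondence to be a clean bijection. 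Once these conventions are pinned down, each individual verification is a short computation of the kind already carried out in \prref{2.6}, \prref{2.8} and \thref{3.4}, so I would present the argument at that level of detail, leaving the most routine diagram chases to the reader.
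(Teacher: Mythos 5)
Your proposal is essentially the paper's proof: extract $\varepsilon_\lambda$ from the $A$-action on the unit object and $\Delta_{\lambda,g,g_1}$ from the $A$-action on $(G\times G,A)\ot(G\times G,A)$, use the morphisms $f_m:\ A^{(\nu(v))}\to M$, $a\mapsto ma$, together with naturality of the tensor product to see that the action on an arbitrary $M\ot N$ is then forced to be $(m\ot n)a=ma_{(1,\cdot)}\ot na_{(2,\cdot)}$, read the coassociativity and counit axioms off the (right $A$-linear) associativity and unit constraints, and run the same formulas backwards for the converse. One step needs repair: \equref{5.2.1} cannot be obtained by applying functoriality of $\ot$ to the multiplication $A\ot A\to A$, because that map is not a morphism in $\Zz_A^{\GG}$ (the candidate degree map $(\lambda,g,\lambda',g')\mapsto(\lambda\lambda',g')$ is neither $G$-equivariant for the crossed $G$-set structures nor compatible with $\omega(\lambda,g,\lambda',g')=gg'$); instead, as in the paper, it follows in one line from associativity of the right $A$-action on $A\ot A$, namely $\Delta(aa')=(1\ot 1)(aa')=\bigl((1\ot 1)a\bigr)a'=(a_{(1,g)}\ot a_{(2,g_1)})a'$, followed by the forced action formula.
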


\begin{proof}
Assume that we have a monoidal structure on $\Zz_A^{\GG}$ such that $U$ is strictly monoidal.
We first describe the structure maps $\varepsilon_\lambda$ and $\Delta_{\lambda,g,g_1}$.
Then the unit object is $k$, with a certain right $A$-module structure. From \deref{3.0}, we know that
this action is determined by maps
$$\varepsilon_\lambda:\ k\ot A_{\lambda,e}=A_{\lambda,e}\to k,$$
otherwise stated
\begin{equation}\eqlabel{5.3.0}
\varepsilon_\lambda(a)=1_k\cdot a,
\end{equation}
for $a\in A_{\lambda,e}$.
From \exref{3.1}, we know that
$$(G\times G, \bigoplus_{\lambda,g\in G} A_{\lambda,g})\in \Zz_A^{\GG}.$$
As an object in $\Cc$,
$$(G\times G,A)\ot (G\times G,A)= (G\times G\times G\times G, \bigoplus_{\lambda,\lambda',g,g'\in G}
A_{\lambda,g}\ot A_{\lambda',g'}).$$
The crossed $G$-set structure on $G\times G\times G\times G$ is the following:
$$\omega(\lambda,g,\lambda',g')=gg'~~;~~
(\lambda,g,\lambda',g')\lambda''=(\lambda\lambda'',g^{\lambda''},\lambda'\lambda'',g'^{\lambda''}).$$
Now we have a right $A$-module structure on $A\ot A$. According to \deref{3.0}, this is given
by multiplication maps
$$(A_{\lambda,g}\ot A_{\lambda',g'})\ot A_{\lambda'',g^{\lambda''}g'^{\lambda''}}\to
A_{\lambda\lambda'',g^{\lambda''}}\ot A_{\lambda'\lambda'',g'^{\lambda''}}.$$
Take $\lambda=\lambda'=e$, and replace $\lambda''$ by $\lambda$; this gives multiplication maps
$$\psi_{\lambda,g,g'}:\ (A_{e,g}\ot A_{e,g'})\ot A_{\lambda,g^{\lambda}g'^{\lambda}}\to
A_{\lambda,g^{\lambda}}\ot A_{\lambda,g'^{\lambda}}.$$
Now we define $\Delta_{\lambda,g,g'}:\ A_{\lambda,gg'}\to A_{\lambda,g}\ot A_{\lambda,g'}$
as follows:
\begin{equation}\eqlabel{5.3.2a}
\Delta_{\lambda,g,g'}(a)=\psi_{\lambda,g^{\lambda^{-1}},g'^{\lambda^{-1}}}
\bigl((1_{g^{\lambda^{-1}}}\ot 1_{g'^{\lambda^{-1}}})\ot a\bigr)
=(1_{g^{\lambda^{-1}}}\ot 1_{g'^{\lambda^{-1}}})a.
\end{equation}
For later use, observe that, for $a\in A_{\lambda,g^\lambda,g'^{\lambda}}$,
\begin{equation}\eqlabel{5.3.2}
\Delta_{\lambda,g^\lambda,g'^\lambda}(a)=(1_g\ot 1_{g'})a.
\end{equation}
We now have to show that the maps $\varepsilon_\lambda$ and $\Delta_{\lambda,g,g'}$
satisfy the conditions of \deref{5.2}. Before we do this, we show that the right $A$-action on
$M\ot N$ is completely determined by the maps $\Delta_{\lambda,g,g'}$, for all
$M,N\in \Zz_A^\GG$. We proceed as follows.\\
Let $(V,M)\in \Zz_A^{\GG}$, and fix elements $v\in V$ and $m\in M_v$. Recall from \blref{1.2}
that $G\times G$ is a crossed $G$-set, with structure maps
$$(\lambda,g)\lambda'=(\lambda\lambda',g^{\lambda'})~~;~~\beta(\lambda,g)=g.$$
In \exref{3.1}, we have seen that
$$Z_{\nu(v)}=\{(\lambda,\nu(v)^\lambda)~|~\lambda\in G\}$$
 is a crossed $G$-subset of $G\times G$ and that
 $$\bigl(Z_{\nu(v)}, A^{(\nu(v))}=\bigoplus_{\lambda\in G} A_{\lambda,\nu(v)^\lambda}\bigr)
 \in \Zz_A^\GG.$$
Now $\eta:\ Z_{\nu(v)}\to V$, $\eta(\lambda,\nu(v)^\lambda)=v\lambda$ is a morphism of
crossed $G$-sets. Indeed,
$$\eta((\lambda,\nu(v)^\lambda)\lambda')=\eta(\lambda\lambda',\nu(v)^{\lambda\lambda'})
= v\lambda\lambda'=\eta(\lambda,\nu(v)^\lambda)\lambda',$$
and
$$(\nu\circ \eta)(\lambda,\nu(v)^\lambda)=\nu(v\lambda)=\nu(v)^\lambda=\beta(\lambda,
\nu(v)^\lambda).$$
Now we define
$$f_m:\ A^{(\nu(v))}\to M,~~f_m(a)=ma.$$
It follows from \equref{3.1.1} that $M_v A_{\lambda,\nu(v)^\lambda}\subset M_{v\lambda}$,
so
$$f_m(A_{\lambda,\nu(v)^\lambda})\subset M_{v\lambda},$$
and $(\eta,f_m):\ (Z_{\nu(v)}, A^{(\nu(v))})\to (V,M)$ is a morphism in $\Mm_A^\GG$.\\
Now take $(V',N)\in \Mm_A^\GG$, fix $v'\in V'$ and $n\in N_{v'}$, and
repeat the above construction. We obtain a morphism
$(\eta',g_n):\ (Z_{\nu'(v')}, A^{(\nu'(v'))})\to (V',N)$ in $\Mm_A^\GG$.\\
From the functoriality of the tensor product, it follows that $(\eta\eta',f_m\ot g_n)$ 
is a morphism in $\Zz_A^\GG$, in particular, $f_m\ot g_n$ is right $A$-linear. Now take
$a\in A_{\lambda,\nu(v)^\lambda\nu'(v')^{\lambda}}$. Since $f_m(1_{\nu(v)})=m$
and $g_n(1_{\nu'(v')})=n$, we find
\begin{eqnarray}
(m\ot n)a&=& \bigl((f_m\ot g_n)(1_{\nu(v)}\ot 1_{\nu'(v')})\bigr)a\nonumber\\
&=& (f_m\ot g_n)\bigl((1_{\nu(v)}\ot 1_{\nu'(v')})a\bigr)\nonumber\\
&\equal{\equref{5.3.2}}&(f_m\ot g_n)(\Delta_{\lambda,\nu(v)^\lambda,\nu'(v')^\lambda}(a))\nonumber\\
&=&ma_{(1,\nu(v)^{\lambda})}\ot na_{(2,\nu'(v')^{\lambda})}.\eqlabel{5.3.3}
\end{eqnarray}
We are now ready to show that each $A_\lambda$ is a semi-Hopf group coalgebra. 
The (trivial) associativity constraint $a_{A,A,A}$
$$
\bigl((G\times G,A)\ot (G\times G,A)\bigr)\ot (G\times G,A)\to
(G\times G,A)\ot \bigl((G\times G,A)\ot (G\times G,A)\bigr)$$
is a morphism in $\Zz_A^\GG$; in particular $a_{A,A,A}$ is right $A$-linear. For all
$a\in A_{\lambda,gg'g''}$, we have that
\begin{eqnarray*}
&&\hspace*{-2cm}
\bigl(1_{g^{\lambda^{-1}}}\ot (1_{g'^{\lambda^{-1}}}\ot 1_{g''^{\lambda^{-1}}})\bigr)a
\equal{\equref{5.3.3}}
a_{(1,g)}\ot (1_{g'^{\lambda^{-1}}}\ot 1_{g''^{\lambda^{-1}}})a_{(2,g'g'')}\\
&=& a_{(1,g)}\ot \Delta_{\lambda,g'g''}(a_{(2,g'g'')})
\end{eqnarray*}
equals
\begin{eqnarray*}
&&\hspace*{-2cm}
a_{A,A,A}\bigl((1_{g^{\lambda^{-1}}}\ot 1_{g'^{\lambda^{-1}}})\ot 1_{g''^{\lambda^{-1}}}\bigr)a\\
&=&a_{A,A,A}\Bigl(\bigl((1_{g^{\lambda^{-1}}}\ot 1_{g'^{\lambda^{-1}}})\ot 1_{g''^{\lambda^{-1}}}\bigr)a\Bigr)\\
&\equal{\equref{5.3.3}}&
a_{A,A,A}\bigl( (1_{g^{\lambda^{-1}}}\ot 1_{g'^{\lambda^{-1}}})a_{(1,gg')}\ot a_{(2,g'')}\bigr)\\
&=& a_{A,A,A}(\Delta_{\lambda,g,g'}(a_{(1,gg')})\ot a_{(2,g'')}),
\end{eqnarray*}
which is precisely the required coassociativity condition. Now we prove the counit conditions.
The (trivial) left counit constraint $l_A:\ (\{*\},k)\ot (G\times G,A)\to (G\times G,A)$ is a morphism
in $\Zz_A^\GG$, hence $l_A$ is right $A$-linear. For $a\in A_{\lambda,g}$, we have
\begin{eqnarray*}
a&=&
l_A(1_k\ot 1_{g^\lambda})a=l_A((1_k\ot 1_{g^\lambda})a)\equal{\equref{5.3.3}}
l_A(1_k.a_{(1,e)}\ot 1_{g^\lambda} a_{(2,g)})\\
&=& l_A(\varepsilon_\lambda(a_{(1,e)})\ot a_{(2,g)})=\varepsilon_\lambda(a_{(1,e)}) a_{(2,g)},
\end{eqnarray*}
The right counit property is handled in a similar way. Now let $a\in A_{\lambda,gg_1}$ and
$a'\in A_{\lambda',g^{\lambda'}g_1^{\lambda'}}$. Then $aa'\in A_{\lambda\lambda',g^{\lambda'}g_1^{\lambda'}}$
and
\begin{eqnarray*}
&&\hspace*{-2cm}
\Delta_{\lambda\lambda',g^{\lambda'},g_1^{\lambda'}}(aa')=
(1_{g^{\lambda^{-1}}}\ot 1_{g_1^{\lambda^{-1}}})aa'\\
&=&
(a_{(1,g)}\ot a_{(2,g_1)})a'\equal{\equref{5.3.3}}
a_{(1,g)}a'_{(1,g^{\lambda'})}\ot a_{(2,g_1)}a'_{(2,g_1^{\lambda'})}.
\end{eqnarray*}
This proves that \equref{5.2.1} holds. Now take $a\in A_{\lambda,e}$ and $a'\in A_{\lambda',e}$.
Then
$$\varepsilon_{\lambda\lambda'}(aa')=1_k\cdot (aa')=(1_k\cdot a)\cdot a'=
\varepsilon_\lambda(a)\cdot a'=\varepsilon_\lambda(a)\varepsilon_{\lambda'}(a'),$$
proving \equref{5.2.2}. Finally
$$\Delta_{e,gg'}(1_{gg'})=(1_g\ot 1_{g'})1_{gg'}\equal{\equref{3.1.0}}1_g\ot 1_{g'},$$
and
$$\varepsilon_e(1_e)=1_k\cdot 1_e\equal{\equref{3.1.0}} 1_k.$$
Conversely, assume that $A$ is a $\GG$-graded bialgebra. Let $(V,M),~(V',M')\in \Zz_A^\GG$.
We have already seen that $V\times V'$ is again a crossed $G$-set. Now we define a right
$A$-module structure on $M\ot M'=\oplus_{(v,v')\in V\times V'} M_v\ot M'_{v'}$ using \equref{5.3.3},
which is designed in such a way that $(V\times V',M\ot M')\in \Zz_A^\GG$. Also $(\{*\},k)\in \Zz_A^\GG$,
using \equref{5.3.0}. Then straightforward computations show that this makes $\Zz_A^\GG$ into
a monoidal category such that the forgetful functor to $\Cc$ is strictly monoidal.\\
Let us show that the tensor on $\Zz_A^\GG$ is functorial. Consider morphisms
$$(\eta,\varphi):\ (V,M)\to (W,N)~~;~~(\eta',\varphi'):\ (V',M')\to (W',N')$$
in $\Zz_A^\GG$. The diagram \equref{3.2.5} takes the form
$$\xymatrix{
M_v\ot M'_{v'}\ot A_{\lambda,\nu(v)^\lambda\nu'(v')^{\lambda}}
\ar[d]_{\varphi_v\ot \varphi'_{v'}\ot id}
\ar[rr]^{}&& M_{v\lambda}\ot M'_{v'\lambda}
\ar[d]^{\varphi_{v\lambda}\ot \varphi'_{v'\lambda}}\\
N_{\eta(v)}\ot N'_{\eta'(v')}\ot A_{\lambda,\nu(v)^\lambda\nu'(v')^{\lambda}}
\ar[rr]^{}&& N_{\eta(v\lambda)}\ot N'_{\eta'(v'\lambda)}
}$$
and we have to show that it commutes. Since $(\eta,\varphi)$ and $(\eta',\varphi')$ are morphisms
in $\Zz_A^\GG$, we have, for $m\in M_v$, $a_1\in A_{\lambda,\nu(v\lambda)}$,
$m'\in M'_{v'}$ and $a_2\in A_{\lambda,\nu'(v'\lambda)}$ that
$$\varphi_{v\lambda}(ma_1)=\varphi_v(m)a_1~~{\rm and}~~
\varphi'_{v'\lambda}(m'a_2)=\varphi'_{v'}(m)a_2.$$
Now take $a\in A_{\lambda,\nu(v)^\lambda\nu'(v')^{\lambda}}$. Then we have
\begin{eqnarray*}
&&\hspace*{-15mm}
\varphi_{v\lambda}\bigl(ma_{(1,\nu(v)^\lambda)}\bigr)
\ot \varphi'_{v'\lambda}\bigl(m'a_{(2,\nu'(v')^\lambda)}\bigr)\\
&=& \varphi_v(m) a_{(1,\nu(v)^\lambda)}
\ot \varphi'_{v'}(m')a_{(2,\nu'(v')^\lambda)}
= \bigl((\varphi_v\ot  \varphi'_{v'})(m\ot m')\bigr)a,
\end{eqnarray*}
as needed.
\end{proof}

It would be nice to have a result similar to \thref{5.3}, with the category $\Zz_A^{\GG}$
replaced by $\Tt_A^{\GG}$. Unfortunately, we were only able to prove it in one direction.
First, we need to introduce the category $\Dd$, which can be viewed as the $\Tt$-version
of $\Cc$: it has the same objects as $\Cc$, and a morphism $(V,M)\to (W,N)$ is a couple
$(\eta, (\varphi_w)_{w\in W})$, with $\eta:\ W\to V$ a morphism of crossed $G$-sets,
and $\varphi_w:\ M_{\eta(w)}\to N_w$ $k$-linear, for all $w\in W$.

\begin{proposition}\prlabel{5.3bis}
Let $G$ be a group, and $A$ a $\GG$-graded bialgebra. Then we have a monoidal
structure on $\Tt_A^{\GG}$ such that the forgetful functor $\Tt_A^{\GG}\to \Dd$ is
monoidal.
\end{proposition}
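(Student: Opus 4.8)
The statement to prove is \prref{5.3bis}: given a $\GG$-graded bialgebra $A$, we want a monoidal structure on $\Tt_A^{\GG}$ making the forgetful functor to $\Dd$ monoidal. The plan is to mimic the ``easy direction'' of the proof of \thref{5.3}, i.e. the part where we went from a $\GG$-graded bialgebra structure back to a monoidal structure on $\Zz_A^{\GG}$, but now keeping track of $\Tt$-morphisms instead of $\Zz$-morphisms. On objects, we already know what to do: an object of $\Tt_A^{\GG}$ is the same as an object of $\Zz_A^{\GG}$, namely a pair $(V,M)$ with $V$ a crossed $G$-set and $M=\oplus_{v}M_v$ a right $A$-module graded by $V$, so we define the tensor product $(V,M)\ot(V',M') = (V\times V', M\ot M')$ exactly as in \thref{5.3}, with the right $A$-action on $M\ot M'$ given by formula \equref{5.3.3}, $(m\ot n)a = ma_{(1,\nu(v)^\lambda)}\ot na_{(2,\nu'(v')^\lambda)}$, and unit object $(\{*\},k)$ with action via $\varepsilon_\lambda$ as in \equref{5.3.0}. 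That $(V\times V',M\ot M')$ is again a well-formed object of $\Zz_A^{\GG}=\Tt_A^{\GG}$ (at the object level) was checked in \thref{5.3}; the coherence constraints (associator, unitors) are the trivial ones inherited from $\Mm_k$ and $\Sets$, and by \equref{5.2.1}--\equref{5.2.4} they interact correctly with the $A$-action, again as verified in \thref{5.3}.

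The real content is functoriality of $\ot$ with respect to $\Tt$-morphisms. So first I would take two morphisms $(\eta,(\varphi_w)_{w\in W}):\ (V,M)\to(W,N)$ and $(\eta',(\varphi'_{w'})_{w'\in W'}):\ (V',M')\to(W',N')$ in $\Tt_A^{\GG}$, where now $\eta:\ W\to V$ and $\eta':\ W'\to V'$ run ``backwards'', and $\varphi_w:\ M_{\eta(w)}\to N_w$, $\varphi'_{w'}:\ M'_{\eta'(w')}\to N'_{w'}$. I would define $(\eta,(\varphi_w)_w)\ot(\eta',(\varphi'_{w'})_{w'}) = (\eta\times\eta', (\varphi_w\ot\varphi'_{w'})_{(w,w')\in W\times W'})$, which on underlying data is exactly the tensor of morphisms in $\Dd$, guaranteeing the forgetful functor is monoidal once everything checks out. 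The step to verify is that this pair is actually a morphism in $\Tt_A^{\GG}$, i.e. that $\eta\times\eta'$ is a morphism of crossed $G$-sets (immediate: it's a product of such, and $\omega(\eta(w),\eta'(w'))$ matches $\omega(w,w')$ since $\eta,\eta'$ preserve the $\nu$-maps) and that the diagram \equref{3.2.6} commutes for $M\ot M'\to N\ot N'$. Concretely: for $m\ot m'\in M_{\eta(w)}\ot M'_{\eta'(w')}$ and $a\in A_{\lambda,\omega(\eta(w),\eta'(w'))^{\lambda}\cdots}$ of the appropriate degree, I must show
\[
(\varphi_{w\lambda}\ot\varphi'_{w'\lambda})\bigl((m\ot m')a\bigr) = \bigl((\varphi_w\ot\varphi'_{w'})(m\ot m')\bigr)a.
\]
Expanding the left side via \equref{5.3.3} gives $\varphi_{w\lambda}(m a_{(1,-)})\ot\varphi'_{w'\lambda}(m' a_{(2,-)})$; now apply the $\Tt$-morphism compatibility of $(\eta,(\varphi_w)_w)$ and $(\eta',(\varphi'_{w'})_{w'})$ — which is precisely diagram \equref{3.2.6} for the original morphisms — to turn this into $\varphi_w(m)a_{(1,-)}\ot\varphi'_{w'}(m')a_{(2,-)}$, which by \equref{5.3.3} again is the right side. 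This is the same computation that closes the proof of \thref{5.3}, run with the indices read in the $\Tt$-direction; the degree bookkeeping (the grade of $a$, and that $\nu(\eta(w))=\nu'_W(w)$ etc.\ so the degrees match) is the only thing one must be careful about.

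Finally I would note that the associativity and unit constraints are still just the identity-type morphisms coming from $\Mm_k$ and $\Sets$, now regarded as $\Tt$-morphisms (with the underlying crossed-$G$-set maps being the obvious bijections run backwards), and their naturality with respect to $\Tt$-morphisms, together with the triangle and pentagon axioms, follow formally from the corresponding facts in $\Mm_k$ once one knows that the constraints are $A$-linear in the $\Tt$-sense; the $A$-linearity is checked exactly as in \thref{5.3}, using \equref{5.3.3} and coassociativity/counit of the $\GG$-graded bialgebra structure. The main obstacle — and the reason, as the text already flags, that only one direction can be proved — is that there is no converse: a monoidal structure on $\Tt_A^{\GG}$ does not let one recover the comultiplications $\Delta_{\lambda,g,g'}$, because the key device in \thref{5.3} was evaluating a $\Zz$-morphism $f_m:\ A^{(\nu(v))}\to M$ at the unit $1_{\nu(v)}$ to extract $m$, and in the $\Tt$-setting the morphisms point the wrong way, so this extraction trick is unavailable. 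Hence I would present only the stated implication, and the proof is essentially the ``synthesis half'' of the proof of \thref{5.3} transcribed into $\Tt$-morphism language, with the functoriality-of-tensor diagram \equref{3.2.6} as the one genuine verification.
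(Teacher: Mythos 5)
Your proposal is correct and follows essentially the same route as the paper: the tensor product on objects is taken from \equref{5.3.3} and \equref{5.3.0} exactly as in \thref{5.3}, and the only genuine verification is the commutativity of the $\Tt$-version of diagram \equref{3.2.6}, which you carry out by the same two-step application of the morphism compatibility of the factors. Your closing observation about why the converse fails (the extraction trick $f_m$ evaluated at $1_{\nu(v)}$ is unavailable for $\Tt$-morphisms) is a sensible explanation of the asymmetry the paper only flags.
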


\begin{proof}
As in the proof of \thref{5.3}, we define a right $A$-module structure on
$(V\times V',M\ot M')$ using \equref{5.3.3}, and on $(\{*\},k)$ using \equref{5.3.0}.
Let us show that the tensor product on $\Tt_A^{\GG}$ is functorial. Take
morphisms
$(\eta,(\varphi_w)_{w\in W}):\ (V,M)\to (W,N)$ and
$(\eta',(\varphi'_{w'})_{w'\in W'}):\ (V',M')\to (W',N')$
in $\Tt_A^{\GG}$. The diagram \equref{3.2.6} takes the form
$$\xymatrix{
M_{\eta(w)}\ot M'_{\eta'(w')}\ot A_{\lambda,\nu(\eta(w))^\lambda \nu'(\eta'(w'))^\lambda}
\ar[d]_{\varphi_w\ot \varphi'_{w'}\ot id}\ar[rr]^{}
&&M_{\eta(w\lambda)} \ot M'_{\eta'(w'\lambda)}
\ar[d]^{\varphi_{w\lambda}\ot \varphi'_{w'\lambda'}}
\\
N_w\ot N'_{w'}\ot A_{\lambda,\omega(w)^\lambda\omega'(w')^\lambda}
\ar[rr]^{}&& N_{w\lambda}\ot N'_{w'\lambda}}$$
and we have to show that it commutes. 
$(\eta,(\varphi_w)_{w\in W})$ and $(\eta',(\varphi'_{w'})_{w'\in W'})$ are morphisms
in $\Tt_A^{\GG}$, so, for all $m\in M_{\eta(w)}$, $a_1\in A_{\lambda,\nu(\eta(w))^\lambda}$,
$m'\in M'_{\eta'(w')}$ and $a_2\in A_{\lambda,\nu'(\eta'(w'))^\lambda}$, we have that
$$\varphi_{w\lambda}(ma_1)=\varphi_w(m)a_1~~{\rm and}~~
\varphi'_{w'\lambda}(m'a_2)=\varphi_{w'}(m')a_2.$$
For $a\in A_{\lambda,\nu(\eta(w))^\lambda \nu'(\eta'(w'))^\lambda}$, we now compute
\begin{eqnarray*}
&&\hspace*{-15mm}
(\varphi_{w\lambda}\ot \varphi'_{\omega'\lambda})((m\ot m')a)\\
&=& \varphi_{w\lambda}(ma_{(1,\nu(\eta(w))^\lambda)})
\ot \varphi'_{w'\lambda}(m'a_{(2,\nu'(\eta'(w'))^\lambda)})\\
&=&
\varphi_{w}(m) a_{(1,\omega(w)^\lambda)}
\ot \varphi'_{w'}(m') a_{(2,\omega'(w')^\lambda)}
= (\varphi_{w}(m)\ot \varphi'_{w'}(m')) a,
\end{eqnarray*}
as needed.
\end{proof}

Let $\ul{H}$ be a Hopf group coalgebra. The category of Yetter-Drinfeld modules
$\YDT_{\ul{H}}^{\ul{H}}$ is obtained from the category $\Tt_{\ul{H}}$ using the
center construction, see \cite[Sec. 4]{cl}, and therefore $\YDT_{\ul{H}}^{\ul{H}}$ is
a braided monoidal category. For detail on the centre construction, we refer to
\cite[XIII.4]{k}.
We first describe the monoidal structure.
Take $(V,M),~(V',N)\in \YDT_{\ul{H}}^{\ul{H}}$.
$$(V,M)\ot (V',N)=(V\times V', (M_v\ot N_{v'})_{(v,v')\in V\times V'}),$$
with the following structure. We have already seen that $V\times V'$ is a right crossed $G$-set,
with $\omega:\ V\times V'\to G$, $\omega(v,v')=\nu(v)\nu'(v')$ and $(v,v')g=(vg,v'g)$.\\
$M_v\ot N_{v'}$ is a right $H_{\nu(v)\nu'(v')}$-module, with
\begin{equation}\eqlabel{5.4.1}
(m\ot n)h=mh_{(1,\nu(v))}\ot nh_{(2,\nu'(v'))};
\end{equation}
The coaction maps $\rho_{(v,v'),g}:\ M_{vg}\ot N_{v'g}\to M_v\ot N_{v'}\ot H_g$ are given by
\begin{equation}\eqlabel{5.4.2}
\rho_{(v,v'),g}(m\ot n)=m_{[0,v]}\ot n_{[0,v']}\ot m_{[1,g]}n_{[1,g]}.
\end{equation}
$(\{*\},k)\in\YDT_{\ul{H}}^{\ul{H}}$; we already know that the singleton $\{*\}$ is a right crossed
$G$-set; furthermore $k$ is an $H_e$-module via $\varepsilon$, and the coaction maps
$\rho_{*,g}:\ k\to k\ot H_g$ are given by $ \rho_{*,g}(1_k)=1_k\ot 1_g$.\\
Now we describe the braiding. The braiding isomorphism
$$(V\times V', (M_v\ot N_{v'})_{(v,v')\in V\times V'})\to
(V'\times V, (N_{v'}\ot M_v)_{(v',v)\in V'\times V})$$
is given by the following data:
$$c_{V',V}:\ V'\times V \to V\times V',~~c_{V',V}(v',v)=(v,v'\nu(v)).$$
$$t_{M,N,v',v}:\ M_v\ot N_{v'\nu(v)}\to N_{v'}\ot M_v,~~
t_{M,N,v',v}(m\ot n)=n_{[0,v']}\ot mn_{[1,\nu(v)]}.$$
As we have mentioned, this monoidal structure can be deduced from the center
construction, but it can also be verified directly that this defines a monoidal structure 
on $\YDT_{\ul{H}}^{\ul{H}}$.\\
$\YDZ_{\ul{H}}^{\ul{H}}$ is also a braided monoidal category. The tensor product is
defined using (\ref{eq:5.4.1}-\ref{eq:5.4.2}). The braiding isomorphism
$$(V\times V', (M_v\ot N_{v'})_{(v,v')\in V\times V'})\to
(V'\times V, (N_{v'}\ot M_v)_{(v',v)\in V'\times V})$$
is given by the following data:
$$c^{-1}_{V',V}:\ V\times V'\to V'\times V,~c_{V',V}(v,v')=(v'\nu(v)^{-1},v);$$
$\tilde{t}_{M,N,v,v'}:\ M_v\ot N_{v'}\to N_{v'\nu(v)^{-1}}\ot M_v$ is given by
\begin{equation}\eqlabel{braiding}
\tilde{t}_{M,N,v,v'}(m\ot n)=n_{[0,v'\nu(v)^{-1}]}\ot mn_{[1,\nu(v)]}.
\end{equation}
We will also need the inverse of the braiding of $(c^{-1},\tilde{t})$ 
$$(V'\times V, (N_{v'}\ot M_v)_{(v',v)\in V'\times V})\to
(V\times V', (M_v\ot N_{v'})_{(v,v')\in V\times V'}).$$
This is described by the data
$$c_{V',V}:\ V'\times V\to V\times V,~~c_{V',V}(v',v)=(v,v'\nu(v));$$
$\tilde{q}_{N,M,v',v}:\ N_{v'}\ot M_v\to M_v\ot N_{v'\nu(v)}$
is given by the formula
\begin{equation}\eqlabel{inversebraiding}
\tilde{q}_{N,M,v',v}(n\ot m)=
m\ol{S}_{\nu(v)}(n_{[1,\nu(v)^{-1}]})\ot n_{[0,v'\nu(v)]}.
\end{equation}

If $(V,(M_v)_{v\in V})\in \YDZ_{\ul{H}}^{\ul{H}}$, then it is easy to see that 
$(V,M=\oplus_{v\in V}M_v)\in \Cc$: 
every $M_v$ is a $k$-module. Thus we have a forgetful functor $U':\ \YDZ_{\ul{H}}^{\ul{H}}\to \Cc$,
and it is clear that $U'$ is strictly monoidal.\\
Now assume that every $H_g$ is finitely generated and projective as a $k$-module.
Then we have an isomorphism of categories $Z$ (\thref{3.4}) and a forgetful functor
$U$ as in \thref{5.3} such that the diagram of functors
$$\xymatrix{
\YDZ_{\ul{H}}^{\ul{H}}\ar[rr]^{Z}\ar[d]^{U'}&&\Zz^\GG_{D(\ul{H})}\ar[dll]^U\\
\Cc}$$
commutes. It follows from all these observations that $\Zz^\GG_{D(\ul{H})}$ is
a monoidal category and that $U$ is strictly monoidal. Then it follows from 
\thref{5.3} that $D(\ul{H})$ is a $\GG$-graded bialgebra.\\
Our aim is now to construct the comultiplication and counit maps on $D(\ul{H})$.\\
We know that $(G\times G, D(\ul{H}))\in \Zz^\GG_{D(\ul{H})}$, see \exref{3.1}. 
From \thref{3.4} and \prref{4.4}, we know that $H(G\times G, D(\ul{H}))=(G\times G, D(\ul{H})_{(\lambda,g)
\in G\times G})\in \YDZ_{\ul{H}}^{\ul{H}}$. We compute the structure maps, using the proof
of \thref{5.3}.\\
First, every $D(\ul{H})_{(\lambda,g)}=H^*_{\lambda^{-1}}\# H_g$ is a right $H_g$-module
in the obvious way: $(\xi\# h)h'=\xi\#hh'$.\\
The coaction maps
$$\rho_{(\lambda,g),\lambda'}:\ H^*_{(\lambda\lambda')^{-1}}\# H_{g^{\lambda'}}
\to (H^*_{\lambda^{-1}}\# H_g)\ot H_{\lambda'}$$
are given by
\begin{equation}\eqlabel{coaction}
\rho_{(\lambda,g),\lambda'}(\xi\# h)=(\xi\# h)(\xi^{(\lambda')}\# 1_g)\ot h^{(\lambda')},
\end{equation}
where we use the notation introduced in \blref{2.9}: $\xi^{(\lambda)}\ot h^{(\lambda)}$ is a
finite dual basis of $H_\lambda$. 
$H_\lambda$ is a finitely projective algebra, hence $H_\lambda^*$ is a coalgebra, with comultiplication
\begin{equation}\eqlabel{5.4.3}
\Delta(\xi)=\xi_{(1)}\ot \xi_{(2)}=\lan \xi, h^{(\lambda)}\ol{h}^{(\lambda)}\ran
\xi^{(\lambda)}\ot \ol{\xi}^{(\lambda)},
\end{equation}
where $\ol{\xi}^{(\lambda)}\ot \ol{h}^{(\lambda)}= \xi^{(\lambda)}\ot h^{(\lambda)}$ is a second copy
of the dual basis of $H_\lambda$. Since $\YDZ_{\ul{H}}^{\ul{H}}$ is monoidal, we have that
$$(G\times G\times G\times G, (D(\ul{H})_{\lambda, g}\ot D(\ul{H})_{\lambda', g'})\in \YDZ_{\ul{H}}^{\ul{H}},$$
and we compute
\begin{eqnarray*}
&&\hspace*{-2cm}
\rho_{(\lambda,g,\lambda,g'),\lambda^{-1}}\bigl((\varepsilon\# 1_{g^{\lambda^{-1}}})
\ot (\varepsilon\# 1_{g'^{\lambda^{-1}}})\bigr)\\
&\equal{\equref{5.4.2}}&
(\xi^{(\lambda^{-1})}\# 1_g)\ot (\ol{\xi}^{(\lambda^{-1})}\# 1_{g'})\ot h^{(\lambda^{-1})}\ol{h}^{(\lambda^{-1})}.
\end{eqnarray*}
Now we apply \equref{5.3.2a} to compute $\Delta_{\lambda,g,g'}:\ D(\ul{H})_{\lambda,gg'}\to 
D(\ul{H})_{\lambda,g}\ot D(\ul{H})_{\lambda,g'}$: for $\xi\in H^*_{\lambda^{-1}}$ and
$h\in H_{gg'}$, we have
\begin{eqnarray*}
&&\hspace*{-2cm}
\Delta_{\lambda,g,g'}(\xi \# h)
\equal{\equref{5.3.2a}}
\bigl((\varepsilon\# 1_{g^{\lambda^{-1}}})\ot (\varepsilon\# 1_{g'^{\lambda^{-1}}})\bigr)(\xi\# h)\\
&=& (\xi^{(\lambda^{-1})}\# 1_g)\ot (\ol{\xi}^{(\lambda^{-1})}\# 1_{g'})
\lan \xi, h^{(\lambda^{-1})}\ol{h}^{(\lambda^{-1})}\ran h\\
&\equal{(\ref{eq:5.4.1},\ref{eq:5.4.3})}&
(\xi_{(1)}\# h_{(1,g)})\ot (\xi_{(2)}\# h_{(2,g')}).
\end{eqnarray*}
Now we compute the counit maps $\varepsilon_\lambda:\ H^*_{\lambda^{-1}}\# H_e\to k$.
$k$ is a right $H^*_{\lambda^{-1}}\# H_e$-module, and
$$\varepsilon_\lambda(\xi\# h)\equal{\equref{5.3.0}} 1_k\cdot (\xi\# h)=\xi(1_{\lambda^{-1}})
\varepsilon(h),$$
since $\rho_{*,\lambda^{-1}}(1_k)=1_k\ot 1_{\lambda^{-1}}$. We conclude our computations
as follows.

\begin{proposition}\prlabel{5.4}
Let $\ul{H}$ be a Hopf group coalgebra, and assume that every $H_g$ is finitely generated
and projective as a $k$-module. Then $D(\ul{H})$ is a $\GG$-graded bialgebra, with
structure maps
\begin{eqnarray*}
&&\hspace*{-2cm}
\Delta_{\lambda,g,g'}:\ D(\ul{H})_{\lambda,gg'}\to 
D(\ul{H})_{\lambda,g}\ot D(\ul{H})_{\lambda,g'},\\
&&
\Delta_{\lambda,g,g'}(\xi \# h)=(\xi_{(1)}\# h_{(1,g)})\ot (\xi_{(2)}\# h_{(2,g')});\\
&&\hspace*{-2cm}
\varepsilon_\lambda:\ H^*_{\lambda^{-1}}\# H_e\to k,~~\varepsilon_\lambda(\xi\# h)=
\xi(1_{\lambda^{-1}})\varepsilon(h).
\end{eqnarray*}
\end{proposition}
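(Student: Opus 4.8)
The plan is to obtain the $\GG$-graded bialgebra structure on $D(\ul{H})$ directly from \thref{5.3}, and then to read off the explicit comultiplication and counit by running through the recipe in the proof of that theorem. The starting point is that $\YDZ_{\ul{H}}^{\ul{H}}$ is braided monoidal, since it arises from $\Tt_{\ul{H}}$ via the centre construction, with tensor product given by (\ref{eq:5.4.1}-\ref{eq:5.4.2}) and unit object $(\{*\},k)$. Because every $H_g$ is finitely generated and projective, \thref{3.4} and \prref{4.4} provide an isomorphism of categories $Z:\ \YDZ_{\ul{H}}^{\ul{H}}\to\Zz^\GG_{D(\ul{H})}$, and the triangle formed by $Z$, the strictly monoidal forgetful functor $U':\ \YDZ_{\ul{H}}^{\ul{H}}\to\Cc$, and the forgetful functor $U:\ \Zz^\GG_{D(\ul{H})}\to\Cc$ commutes. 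Transporting the monoidal structure along $Z$ therefore makes $\Zz^\GG_{D(\ul{H})}$ monoidal with $U$ strictly monoidal, and \thref{5.3} yields at once that $D(\ul{H})$ is a $\GG$-graded bialgebra. It remains only to identify the structure maps.

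For the comultiplication I would use formula \equref{5.3.2a}, which expresses $\Delta_{\lambda,g,g'}$ through the $D(\ul{H})$-action on $(G\times G,D(\ul{H}))\ot(G\times G,D(\ul{H}))\in\Zz^\GG_{D(\ul{H})}$. Via $H=Z^{-1}$ (combined with \prref{4.4}), this tensor square corresponds to the Yetter-Drinfeld module $(G\times G\times G\times G,(D(\ul{H})_{\lambda,g}\ot D(\ul{H})_{\lambda',g'}))$, whose $\ul{H}$-coaction is built from the coaction \equref{coaction} on $(G\times G,D(\ul{H}))$ together with \equref{5.4.2}, and whose $\ul{H}$-action is given by \equref{5.4.1}. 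Evaluating $(1_{g^{\lambda^{-1}}}\ot 1_{g'^{\lambda^{-1}}})(\xi\#h)$ and invoking the coalgebra structure \equref{5.4.3} on the finitely generated projective dual $H^*_{\lambda^{-1}}$ — this is exactly the chain of equalities displayed just before the statement — produces $\Delta_{\lambda,g,g'}(\xi\#h)=(\xi_{(1)}\#h_{(1,g)})\ot(\xi_{(2)}\#h_{(2,g')})$.

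For the counit I would apply \equref{5.3.0}, so that $\varepsilon_\lambda(\xi\#h)=1_k\cdot(\xi\#h)$, where the $D(\ul{H})$-action on $k$ is the one coming from the unit object $(\{*\},k)$ of $\YDZ_{\ul{H}}^{\ul{H}}$: $k$ is an $H_e$-module via $\varepsilon$, with coaction $\rho_{*,\lambda^{-1}}(1_k)=1_k\ot 1_{\lambda^{-1}}$. Substituting into the action formula \equref{3.3.1} gives $\varepsilon_\lambda(\xi\#h)=\xi(1_{\lambda^{-1}})\varepsilon(h)$. I would not separately verify the coherence conditions (\ref{eq:5.2.1}-\ref{eq:5.2.4}): they hold automatically, being precisely the axioms that the transported monoidal structure on $\Zz^\GG_{D(\ul{H})}$ forces through \thref{5.3}.

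The conceptual content is already packaged in \thref{5.3}, so the only genuine obstacle is the bookkeeping. One must check carefully that the triangle of forgetful functors really commutes and — more delicately — that $Z$ sends $H(G\times G,D(\ul{H}))\ot H(G\times G,D(\ul{H}))$ in $\YDZ_{\ul{H}}^{\ul{H}}$ to the object of $\Zz^\GG_{D(\ul{H})}$ whose $D(\ul{H})$-action is the one entering \equref{5.3.2a}. Throughout the computation one has to keep the crossed-$G$-set twists $g^{\lambda}=\lambda^{-1}g\lambda$ and the two copies $\xi^{(\lambda)}\ot h^{(\lambda)}$, $\ol{\xi}^{(\lambda)}\ot\ol{h}^{(\lambda)}$ of the dual basis of $H_\lambda$ correctly aligned; that alignment is the substance of the displayed calculation preceding the proposition.
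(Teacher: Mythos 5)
Your proposal is correct and follows essentially the same route as the paper: the $\GG$-graded bialgebra structure is obtained by transporting the monoidal structure of $\YDZ_{\ul{H}}^{\ul{H}}$ along $Z$ and invoking \thref{5.3}, and the explicit formulas are then read off from \equref{5.3.2a} applied to $(1_{g^{\lambda^{-1}}}\ot 1_{g'^{\lambda^{-1}}})(\xi\# h)$ (using \equref{coaction}, (\ref{eq:5.4.1}--\ref{eq:5.4.2}) and \equref{5.4.3}) and from \equref{5.3.0} for the counit. This matches the computation the paper carries out in the passage immediately preceding the proposition.
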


Recall from \blref{4.5} that $D(\ul{H})$ can also be written as a Koppinen smash product.
Then the comultiplication maps
$$\Delta_{\lambda,g,g'}:\ \Hom(H_{\lambda^{-1}},H_{gg'})\to
\Hom(H_{\lambda^{-1}},H_{g}) \ot \Hom(H_{\lambda^{-1}},H_{g'})$$
can be characterized as follows: $\Delta_{\lambda,g,g'}(f)=f_{(1)}\ot f_{(2)}$ if and only
if
$$\Delta_{g,g'}(f(h_1h_2))=f(h_1)_{(1,g)}\ot f(h_2)_{(2,g')},$$
for all $h_1,h_2\in H_{\lambda^{-1}}^*$. The counit maps are the following:
$$\varepsilon_\lambda:\ \Hom(H_{\lambda^{-1}},H_e)\to k,~~
\varepsilon_\lambda(f)=(\varepsilon\circ f)(1_{\lambda^{-1}}).$$

Let $A$ be $\GG$-graded bialgebra. From \thref{5.3} and \prref{5.3bis}, it follows
that a monoidal structure on $\Zz_A^\GG$ such that $U$ is strictly monoidal
induces a monoidal structure on $\Tt_A^\GG$. The tensor product of objects
coincides in both categories.\\
On $\Zz_{D(\ul{H})}^\GG$, we have the monoidal structure transported using
the category isomorphism with $\YDZ_{\ul{H}}^{\ul{H}}$. We have also a monoidal
structure arising from the $\GG$-graded bialgebra structure on $D(\ul{H})$,
using \thref{5.3}. These two monoidal structures coincide, actually this is the way
the $\GG$-graded bialgebra structure on $D(\ul{H})$ is constructed in the proof of
\prref{5.4}.\\
The monoidal structure on $\YDT_{\ul{H}}^{\ul{H}}$ can be transported to a
monoidal structure on $\Tt_{D(\ul{H}}^\GG$. It follows easily from our previous
constructions that this monoidal structure is induced from the monoidal structure
on $\Zz_{D(\ul{H}}^\GG$. Hence this monoidal structure coincide with the
monoidal structures arising from the $\GG$-graded bialgebra structure on $D(\ul{H})$,
using \prref{5.3bis}.
We summarize these observations as follows.

\begin{theorem}\thlabel{5.5a}
The $\GG$-graded bialgebra structure on $D(\ul{H})$ from \prref{5.4} defines a
monoidal algebra structure on $\Zz_{D(\ul{H})}^\GG$ and $\Tt_{D(\ul{H})}^\GG$
(\thref{5.3} and \prref{5.3bis}) that are such that the category isomorphisms
$\Zz_{D(\ul{H})}^\GG\cong \YDZ_{\ul{H}}^{\ul{H}}$ and 
$\Tt_{D(\ul{H})}^\GG\cong \YDT_{\ul{H}}^{\ul{H}}$ 
are isomorphisms of monoidal categories.
\end{theorem}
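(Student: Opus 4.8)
The plan is to establish the two monoidal category isomorphisms separately: the $\Zz$-version follows almost immediately from the material assembled just before \prref{5.4}, and the $\Tt$-version is then deduced from it via \prref{5.3bis}.

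For the $\Zz$-version, recall that combining \prref{4.4} with \thref{3.4} gives an isomorphism of categories $Z\colon \YDZ^{\ul{H}}_{\ul{H}}\to \Zz^{\GG}_{D(\ul{H})}$ acting as the identity on the underlying families of $k$-modules, and that the forgetful functors $U'\colon \YDZ^{\ul{H}}_{\ul{H}}\to \Cc$ and $U\colon \Zz^{\GG}_{D(\ul{H})}\to \Cc$ satisfy $U'=U\circ Z$, with $U'$ strictly monoidal. First I would transport the (braided) monoidal structure of $\YDZ^{\ul{H}}_{\ul{H}}$ along $Z$; this makes $Z$ a monoidal isomorphism by fiat, and since $U'=U\circ Z$ is strictly monoidal, so is $U$. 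By \thref{5.3}, such a monoidal structure on $\Zz^{\GG}_{D(\ul{H})}$ corresponds to a unique $\GG$-graded bialgebra structure on $D(\ul{H})$; and the computations carried out immediately before \prref{5.4} identify its comultiplication and counit with the maps displayed there. Hence, \emph{by construction}, the monoidal structure on $\Zz^{\GG}_{D(\ul{H})}$ arising from the $\GG$-graded bialgebra structure of \prref{5.4} (through \thref{5.3}) is exactly the one transported from $\YDZ^{\ul{H}}_{\ul{H}}$, so $Z$ is an isomorphism of monoidal categories.

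For the $\Tt$-version, \prref{5.3bis} equips $\Tt^{\GG}_{D(\ul{H})}$ with a monoidal structure coming from the $\GG$-graded bialgebra structure of \prref{5.4}, whose tensor product of objects coincides with that of $\Zz^{\GG}_{D(\ul{H})}$ --- in both the right $D(\ul{H})$-action on $M\ot M'$ is given by \equref{5.3.3} --- and for which the forgetful functor $\Tt^{\GG}_{D(\ul{H})}\to \Dd$ is monoidal. On the other hand, \prref{4.4} and \thref{3.4} also furnish an isomorphism $\Tt^{\GG}_{D(\ul{H})}\cong \YDT^{\ul{H}}_{\ul{H}}$ which agrees with $Z$ on objects, and along it one can transport the monoidal structure of $\YDT^{\ul{H}}_{\ul{H}}$. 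I would then check that these two monoidal structures on $\Tt^{\GG}_{D(\ul{H})}$ coincide. On objects this is immediate: the underlying family of $M\ot M'$ is the same, and the $D(\ul{H})$-action on it depends only on the object and not on whether the ambient category is the $\Zz$- or the $\Tt$-version, so it is given by \equref{5.3.3} in both cases --- this is the computation preceding \prref{5.4}, read verbatim. The associativity and unit constraints are the ``trivial'' ones, inherited from $\Dd$ in the first structure and from $\Tt_k$ (via the centre construction producing $\YDT^{\ul{H}}_{\ul{H}}$, see \seref{4}) in the second; since the relevant forgetful functors to $\Dd$ commute with the isomorphism $\Tt^{\GG}_{D(\ul{H})}\cong \YDT^{\ul{H}}_{\ul{H}}$, these constraints agree, and functoriality of the tensor is handled exactly as in \prref{5.3bis}.

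The only genuine obstacle is this last identification: that the monoidal structure on $\Tt^{\GG}_{D(\ul{H})}$ produced by \prref{5.3bis} from the bialgebra structure of \prref{5.4} really is the one transported from $\YDT^{\ul{H}}_{\ul{H}}$. Once one notes that the $D(\ul{H})$-module structure on a tensor product object is determined by the object alone (so the $\Zz$-level computation of \seref{3} and \seref{5} applies unchanged) and that both families of structure constraints are pulled back from $\Dd$, this reduces to bookkeeping already in place; no calculation of substance beyond \seref{3} and \seref{5} is required. Note that the statement only asserts monoidality, not braidedness, so the braiding on $\YDT^{\ul{H}}_{\ul{H}}$ plays no role here.
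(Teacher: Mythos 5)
Your proposal is correct and follows essentially the same route as the paper: the $\Zz$-statement holds by construction (the $\GG$-graded bialgebra structure of \prref{5.4} is read off, via \thref{5.3}, from the monoidal structure transported from $\YDZ_{\ul{H}}^{\ul{H}}$), and the $\Tt$-statement reduces to observing that the tensor product of objects agrees in the $\Zz$- and $\Tt$-versions on both sides, with the morphism-level data forced by the faithful strictly monoidal forgetful functors and handled by \prref{5.3bis}. The paper compresses exactly this into the paragraph preceding the theorem.
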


\section{$\GG$-graded Hopf algebras}\selabel{5b}
\begin{definition}\delabel{5.5}
Let $A=\oplus_{\lambda,g\in G} A_{\lambda,g}$ be a $\GG$-graded bialgebra.
We call $A$ a $\GG$-graded Hopf algebra if there exist maps
$$S_{\lambda,g},\ol{S}_{\lambda,g}:\ A_{\lambda,g^{-1}}\to A_{\lambda^{-1},g^{\lambda^{-1}}}$$
such that
\begin{eqnarray}
a_{(1,g)}S_{\lambda,g}(a_{(2,g^{-1})})&=&a_{(2,g)}\ol{S}_{\lambda,g}(a_{(1,g^{-1})})=
\varepsilon_\lambda(a)1_{g^{\lambda^{-1}}}
\eqlabel{5.5.1};\\
S_{\lambda,g}(a_{(1,g^{-1})})a_{(2,g)}&=&\ol{S}_{\lambda,g}(a_{(2,g^{-1})})a_{(1,g)}=
\varepsilon_\lambda(a)1_{g},\eqlabel{5.5.2}
\end{eqnarray}
for all $a\in A_{\lambda,e}$.
The $S_{\lambda,g}$ ($\ol{S}_{\lambda,g}$) are called the (twisted) antipode maps.
\end{definition}

\begin{proposition}\prlabel{5.6}
Let $\ul{H}$ be a Hopf group coalgebra, and assume that every $H_g$ is finitely generated
and projective as a $k$-module. Then $D(\ul{H})$ is a $\GG$-graded Hopf algebra, with
(twisted) antipode maps
$$S_{\lambda,g},\ol{S}_{\lambda,g}:\ H^*_{\lambda^{-1}}\# H_{g^{-1}}\to H^*_{\lambda}\# H_{g^{\lambda^{-1}}},$$
\begin{eqnarray*}
S_{\lambda,g}(\xi\# h)&=&(\varepsilon\# S_g(h))(\xi\circ \ol{S}_{\lambda^{-1}}
\# 1_{g^{\lambda^{-1}}});\\
\ol{S}_{\lambda,g}(\xi\# h)&=&(\varepsilon\# \ol{S}_g(h))(\xi\circ {S}_{\lambda^{-1}}
\# 1_{g^{\lambda^{-1}}}).
\end{eqnarray*}
\end{proposition}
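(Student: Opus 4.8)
The plan is to verify directly the four antipode identities \equref{5.5.1}--\equref{5.5.2} of \deref{5.5}, using the explicit smash-product description of $D(\ul H)$ from \blref{4.5} together with the comultiplication and counit computed in \prref{5.4}; since $D(\ul H)$ is already a $\GG$-graded bialgebra by \prref{5.4}, exhibiting maps $S_{\lambda,g},\ol S_{\lambda,g}$ with these properties is all that is needed. First I would record that for a homogeneous element $a=\xi\#h\in D(\ul H)_{\lambda,e}$ (so $h\in H_e$, $\xi\in H^*_{\lambda^{-1}}$), \prref{5.4} gives $\Delta_{\lambda,g,g'}(\xi\#h)=(\xi_{(1)}\#h_{(1,g)})\ot(\xi_{(2)}\#h_{(2,g')})$, where $\xi_{(1)}\ot\xi_{(2)}$ is the coalgebra structure \equref{5.4.3} on the finitely generated projective $k$-module $H^*_{\lambda^{-1}}$ and $h_{(1,g)}\ot h_{(2,g')}=\Delta_{g,g'}(h)$. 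I would then check the grading: $\ol S_{\lambda^{-1}}\colon H_\lambda\to H_{\lambda^{-1}}$ and $S_g\colon H_{g^{-1}}\to H_g$, so $\xi\circ\ol S_{\lambda^{-1}}\in H^*_\lambda$ and $\varepsilon\#S_g(h)\in D(\ul H)_{e,g}$, whence $(\varepsilon\#S_g(h))(\xi\circ\ol S_{\lambda^{-1}}\#1_{g^{\lambda^{-1}}})$ is a legitimate product $D(\ul H)_{e,g}\cdot D(\ul H)_{\lambda^{-1},g^{\lambda^{-1}}}\subset D(\ul H)_{\lambda^{-1},g^{\lambda^{-1}}}$, the component prescribed for $S_{\lambda,g}$ by \deref{5.5}; similarly for $\ol S_{\lambda,g}$.

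Next I would substitute these data into \equref{5.5.1}. The left-hand side $a_{(1,g)}\,S_{\lambda,g}(a_{(2,g^{-1})})$, after writing $S_{\lambda,g}(a_{(2,g^{-1})})=(\varepsilon\#S_g(h_{(2,g^{-1})}))(\xi_{(2)}\circ\ol S_{\lambda^{-1}}\#1_{g^{\lambda^{-1}}})$ and expanding the two smash-product multiplications using the formula of \blref{4.5}, becomes a sum over the dual basis of $H_{\lambda^{-1}}$ (implicit in \equref{5.4.3}) and the Sweedler components of $h$. The computation then separates into two independent parts. On the $H$-leg one is left with $h_{(1,g)}S_g(h_{(2,g^{-1})})$ with $h\in H_e$, which equals $\varepsilon(h)1_g$ by the antipode axiom for the Hopf group coalgebra $\ul H$ recalled in \blref{4.2a}. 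On the $H^*$-leg one uses \equref{5.4.3}, the relation $\ol S_{\lambda^{-1}}=S_\lambda^{-1}$ (also from \blref{4.2a}) and the dual-basis contractions (together with \equref{2.9.2} where needed) to collapse the sum to the scalar $\xi(1_{\lambda^{-1}})$. Reassembling, and using $\varepsilon_\lambda(\xi\#h)=\xi(1_{\lambda^{-1}})\varepsilon(h)$ from \prref{5.4}, yields $\varepsilon_\lambda(a)1_{g^{\lambda^{-1}}}$, as required. The second identity in \equref{5.5.1} and those in \equref{5.5.2} are verified by the mirror-image computations, using $h_{(2,g)}\ol S_g(h_{(1,g^{-1})})=\varepsilon(h)1_g$, $S_g(h_{(1,g^{-1})})h_{(2,g)}=\varepsilon(h)1_g$ and $\ol S_g(h_{(2,g^{-1})})h_{(1,g)}=\varepsilon(h)1_g$ respectively on the $H$-leg.

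For the twisted antipode maps $\ol S_{\lambda,g}$ I would not repeat the computation: the whole picture is symmetric under interchanging $S_g\leftrightarrow\ol S_g$ and $S_\lambda\leftrightarrow\ol S_\lambda$ --- equivalently, under passing to the opposite Hopf group coalgebra $\ul H^{\rm op}$, for which the twisted antipodes play the role of the antipodes (see \blref{4.2a}) --- so the identities for $\ol S_{\lambda,g}$ follow from those for $S_{\lambda,g}$ by this duality. If one prefers, \equref{5.5.1}--\equref{5.5.2} can equally be checked in the Koppinen-smash-product picture of \blref{4.5}, using the implicit characterization of $\Delta_{\lambda,g,g'}$ recorded after \prref{5.4}; the two approaches are interchangeable.

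The main obstacle is organizational rather than conceptual. One must keep track of three layers of decoration simultaneously: the group elements and their conjugates ($g,g^{-1},g^{\lambda^{-1}},\lambda,\lambda^{-1}$, whose interaction is governed by the crossed $G$-set structure of $\GG$), the group-coalgebra Sweedler indices on $H$, and the auxiliary dual-basis summations on $H^*$ coming from \equref{5.4.3}; and one must apply precisely the right one of $S_g,\ol S_g,S_{\lambda^{-1}},\ol S_{\lambda^{-1}}$ on each leg. In particular, the appearance of the \emph{twisted} antipode $\ol S_{\lambda^{-1}}$ (rather than $S_{\lambda^{-1}}$) on the $H^*$-leg is forced by the co-opposite coalgebra structure carried by $H^*_{\lambda^{-1}}$, itself a consequence of building the double from $\ul H^{\rm op}\ot\ul H$ as in \blref{4.3}. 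Once this bookkeeping is fixed, each of the four verifications reduces to a single application of an antipode axiom of $\ul H$ on the $H$-leg and a single dual-basis collapse on the $H^*$-leg.
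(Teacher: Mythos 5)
Your proposal follows essentially the same route as the paper: a direct verification of one of the four identities in \deref{5.5}, using the comultiplication and counit from \prref{5.4}, first collapsing the $H$-leg via the antipode axiom $h_{(1,g)}S_g(h_{(2,g^{-1})})=\varepsilon(h)1_g$, then collapsing the $H^*$-leg via the opposite-convolution identity $\xi_{(1)}(\xi_{(2)}\circ\ol S_{\lambda^{-1}})=\xi(1_{\lambda^{-1}})\varepsilon$ (which is exactly the auxiliary computation the paper records), with the remaining three identities dismissed as symmetric. The plan, the grading check, and the identification of where the twisted antipode enters are all consistent with the paper's proof.
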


\begin{proof}
For $\xi\in H^*_{\lambda^{-1}}$ and $h\in H_e$, we have
\begin{eqnarray*}
&&\hspace*{-2cm}
(\xi_{(1)}\# h_{(1,g)}) S_{\lambda,g}(\xi_{(2)}\# h_{(2,g^{-1})})\\
&=&
(\xi_{(1)}\# h_{(1,g)})(\varepsilon\# S_g(h_{(2,g^{-1})}))(\xi_{(2)}\circ \ol{S}_{\lambda^{-1}}\# 1_{g^{\lambda^{-1}}})\\
&=&(\xi_{(1)}\# h_{(1,g)}S_g(h_{(2,g^{-1})}))(\xi_{(2)}\circ \ol{S}_{\lambda^{-1}}\# 1_{g^{\lambda^{-1}}})\\
&=& \varepsilon(h)(\xi_{(1)}\# 1_g)(\xi_{(2)}\circ \ol{S}_{\lambda^{-1}}\# 1_{g^{\lambda^{-1}}})\\
&=& \varepsilon(h) \xi(1_{\lambda^{-1}}) (\varepsilon\# 1_{g^{\lambda^{-1}}})=
\varepsilon_\lambda(\xi\# h)(\varepsilon\# 1_{g^{\lambda^{-1}}}),
\end{eqnarray*}
where we used the following property, for all $h\in H_e$:
\begin{eqnarray*}
&&\hspace*{-2cm}
(\xi_{(1)}(\xi_{(2)}\circ S_\lambda^{-1}))(h)=
\xi_{(1)}(h_{(2,\lambda^{-1})})\xi_{(2)}(S_\lambda^{-1}(h_{(1,\lambda)}))\\
&=&\xi(h_{(2,\lambda^{-1})}S_\lambda^{-1}(h_{(1,\lambda)}))=
\xi(1_{\lambda^{-1}})\varepsilon(h).
\end{eqnarray*}
This proves one equality of \equref{5.5.1}; the proof of three other equalities is similar 
and is left to the reader. 
\end{proof}

\section{Braidings and quasitriangular $\GG$-graded Hopf algebras}\selabel{6}
\begin{definition}\delabel{6.1}
Let $A$ be a $\GG$-graded bialgebra. $A$ is called quasitriangular if it comes equipped with
the following additional structure: for all $g,g'\in G$, we have
\begin{eqnarray*}
R_{g,g'}&=& R^1_{g,g'}\ot R^2_{g,g'}\in A_{g^{-1},gg'g^{-1}}\ot A_{e,g};\\
Q_{g,g'}&=&Q^1_{g,g'}\ot Q^2_{g,g'}\in A_{g,g^{-1}g'g}\ot A_{e,g},
\end{eqnarray*}
such that the following conditions are fulfilled:
\begin{equation}
R_{g,g'}Q_{gg'g^{-1},g}=Q_{g',g}R_{g,g^{-1}g'g}=1_{g'}\ot 1_g;\eqlabel{6.1.1}
\end{equation}
\begin{equation}
\Delta_{g,g'^g,g''^g}(R^1_{g,g'g''})\ot R^2_{g,g'g''}=
R^1_{g,g'}\ot \tilde{R}^1_{g,g''}\ot R^2_{g,g'}\tilde{R}^2_{g,g''}\eqlabel{6.1.2}
\end{equation}
in $A_{g,g'^g}\ot A_{g,g''^{g}}\ot A_{e,g}$;
\begin{equation}
R^1_{gg',g''} \ot \Delta_{e,g,g'}(R^2_{gg',g''})=
R^1_{g',g''}\tilde{R}^1_{g,g'g''g'^{-1}}\ot  \tilde{R}^2_{g,g'g''g'^{-1}} \ot R^2_{g',g''}\eqlabel{6.1.3}
\end{equation}
in $A_{(gg')^{-1},g''^{(gg')^{-1}}}\ot A_{e,g}\ot A_{e,g'}$.\\
In addition, we have
for all $a\in A_{\lambda,g^\lambda g'^\lambda}$ that
\begin{equation}\eqlabel{6.1.4}
\tau(\Delta_{\lambda, g^\lambda, g'^\lambda})R_{g^\lambda, g'^\lambda}=
R_{g,g'}\Delta_{\lambda,g'^{g^{-1}\lambda},g^\lambda}(a).
\end{equation}
Here $\tau$ is the switch map.
\end{definition}

\deref{6.1} has a monoidal categorical justification.
Let $G$ be a group, and $A$ a $\GG$-graded bialgebra. We know that $\Zz_A^\GG$ is a
monoidal category, and that the forgetful functor $U:\ \Zz_A^\GG\to \Xx^G_G$ is
monoidal. Let $\Xx^{G{\rm inv}}_G$ be $\Xx^G_G$ with the inverse braiding $c^{-1}$.
Then we can look at braidings on $\Zz_A^\GG$ such that $U$ preserves the
braiding. Such a braiding is of the form $(c^{-1},\tilde{t})$, where $c$ is the braiding on $ \Xx^G_G$
as described in \blref{4.1}. In \prref{5.3bis}, we have seen that we have a monoidal
structure on $\Tt_A^\GG$ such that $V:\ \Tt_A^\GG\to \Xx^G_G$ is
monoidal, and we can consider braidings on $\Tt_A^\GG$ of the form
$(c,t)$, i.e. they are such that $V$ preserves the braiding.

\begin{theorem}\thlabel{6.2}
Let $G$ be a group, and $A$ a $\GG$-graded bialgebra. There is a bijective correspondence
between the following data:
\begin{itemize}
\item braidings on $\Zz_A^\GG$ of the form $(c^{-1},\tilde{t})$;
\item braidings on $\Tt_A^\GG$ of the form $(c,t)$;
\item quasitriangular structures on $A$ as defined in \deref{6.1}.
\end{itemize}
\end{theorem}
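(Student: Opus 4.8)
The plan is to mimic the proofs of \thref{5.3} and \prref{5.3bis}. A braiding compatible (via the forgetful functor $U$, resp.\ $V$) with the base braiding of $\Xx^G_G$ is a priori a large family of $A$-linear maps, but by the same universality argument used in \thref{5.3} it is pinned down by its values on the generating objects $(Z_{g},A^{(g)})$ of \exref{3.1}, and those values are precisely the elements $R_{g,g'}$ and $Q_{g,g'}$ of \deref{6.1}. I will set up the bijection between braidings on $\Zz_A^\GG$ of the form $(c^{-1},\tilde t)$ and quasitriangular structures on $A$, then observe that the argument transposes verbatim to $\Tt_A^\GG$ with $(c,t)$, whence the two kinds of braiding correspond to the same data and hence to one another.

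\textbf{From a braiding to $(R,Q)$.} Suppose $(c^{-1},\tilde t)$ is a braiding on $\Zz_A^\GG$ preserved by $U$; the base component is forced, so only the linear data $\tilde t_{M,N}\colon M\ot N\to N\ot M$ is at our disposal. Apply $\tilde t_{A^{(g)},A^{(g')}}$ to $1_{g}\ot 1_{g'}\in A_{e,g}\ot A_{e,g'}$ and read off the homogeneous components; the base braiding dictates in which $Z$-pieces the image lies, and this produces $R_{g,g'}\in A_{g^{-1},gg'g^{-1}}\ot A_{e,g}$ (applying the inverse braiding in the same way produces $Q_{g,g'}\in A_{g,g^{-1}g'g}\ot A_{e,g}$). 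For an arbitrary $(V,M)\in\Zz_A^\GG$, $v\in V$, $m\in M_v$, recall from the proof of \thref{5.3} the morphism $f_m\colon (Z_{\nu(v)},A^{(\nu(v))})\to(V,M)$, $f_m(a)=ma$, and the analogous $f_n$ for a second object $(V',N)$; naturality of $\tilde t$ along $f_m\ot f_n$ then expresses $\tilde t_{M,N}(m\ot n)$ in terms of $R$ (resp.\ $Q$), the coactions and the actions on $M,N$, by a formula of the same shape as \equref{braiding}, with the components of $R$ playing the role previously played by the terms built from the antipode. Thus the whole braiding is recovered from $(R,Q)$.

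\textbf{Matching the axioms and the converse.} Invertibility of the braiding, evaluated on $1_{g'}\ot 1_{g}$, is exactly \equref{6.1.1}. The two hexagon identities for $\tilde t$, evaluated on $(Z_{g},A^{(g)})\ot(Z_{g'},A^{(g')})\ot(Z_{g''},A^{(g'')})$ and using the description \equref{5.3.3} of the $A$-action on a tensor product, reduce to \equref{6.1.2} and \equref{6.1.3} (one hexagon carries $\Delta$ onto the first leg of $R$, the other onto the second). Finally, naturality of $\tilde t$ along the grading-shifting morphisms built from the comultiplication maps $\Delta_{\lambda,g,g'}$, as in \equref{5.3.2a}--\equref{5.3.3}, yields \equref{6.1.4}. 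Conversely, given $R,Q$ satisfying \equref{6.1.1}--\equref{6.1.4}, define $\tilde t_{M,N}$ by the formula just described and check that it lands in the correct homogeneous component (a degree computation), that it is a morphism in $\Zz_A^\GG$ --- here $A$-linearity and the naturality squares are where \equref{6.1.2}--\equref{6.1.4} enter --- and that it is invertible with inverse built from $Q$, by \equref{6.1.1}. The two assignments are mutually inverse, by the universality of the first paragraph. For $\Tt_A^\GG$: using \prref{5.3bis} and the morphisms $(Z_{\nu(v)},A^{(\nu(v))})\to(V,M)$ available in $\Tt_A^\GG$, a braiding of the form $(c,t)$ unpacks into the very same equations \equref{6.1.1}--\equref{6.1.4} for a pair $(R,Q)$; hence braidings of the two kinds are each in bijection with quasitriangular structures on $A$, and therefore with each other.

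\textbf{Main obstacle.} The only real difficulty is bookkeeping: all gradings live in $G\times G$ with the crossed action $g^{\lambda}=\lambda^{-1}g\lambda$, and one must verify carefully that the value of the braiding on the ``free'' objects $(Z_{g},A^{(g)})$ genuinely propagates to all of $\Zz_A^\GG$, exactly as in \thref{5.3}. The one conceptually non-routine point is \equref{6.1.4}: it is not a hexagon but the naturality of the braiding against grading-shifting morphisms, so one has to single out the right family of such morphisms --- those coming from $\Delta$ applied to the regular object $(G\times G,A)$ --- for the computation to close.
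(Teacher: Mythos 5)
Your proposal is correct and follows essentially the same route as the paper: extract $R_{g,g'}$ and $Q_{g,g'}$ from the value of the braiding (and its inverse) on $1_g\ot 1_{g'}$ in the regular object, propagate to all of $\Zz_A^\GG$ via naturality along the morphisms $f_m$, and match invertibility, the two hexagons, and right $A$-linearity of $\tilde t_{A,A}$ (your ``naturality against grading-shifting morphisms'', which is the same mechanism since the $A$-action on $A\ot A$ is given by $\Delta$ via \equref{5.3.3}) with \equref{6.1.1}--\equref{6.1.4}. The transfer to $\Tt_A^\GG$ via the reindexing $\tilde{t}_{M,N,v,v'}=t_{M,N,v,v'\nu(v)^{-1}}$ is also how the paper handles the third item.
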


\begin{proof}
Given a braiding $(c,t)$ on $\Tt_A^\GG$, a braiding $(c^{-1},\tilde{t})$ on $\Zz_A^\GG$ is
given by the formula
\begin{equation}\eqlabel{relation}
\tilde{t}_{M,N,v,v'}=t_{M,N,v,v'\nu(v)^{-1}},
\end{equation}
and vice versa.\\
Next assume that we have a braiding $(c^{-1},\tilde{t})$ on $\Zz_A^\GG$.
For $\ul{M}=(V,M),~\ul{N}=(V',N)\in \Zz_A^\GG$, we have the braiding morphism
$$(c^{-1}_{V',V},\tilde{t}_{M,N,v,v'}):\ (V\times V',M\ot N)\to (V'\times V, N\ot M).$$
Then we have
\begin{equation}\eqlabel{6.2.1}
\tilde{t}_{M,N}(M_v\ot N_{v'})\subset N_{v'\nu(v)^{-1}}\ot M_{v}.
\end{equation}
Let $V=V'=G^2$, $M=N=A$. Then
$$c^{-1}_{G^2,G^2}(\lambda,g,\lambda',g')=(\lambda'g^{-1},gg'g^{-1},\lambda ,g),$$
and $\tilde{t}_{A,A}:\ A\ot A\to A\ot A$ satisfies
$$\tilde{t}_{A,A}(A_{\lambda,g}\ot A_{\lambda',g'})\subset A_{\lambda'g^{-1},gg'g^{-1}}\ot A_{\lambda,g}.$$
Now let
\begin{equation}\eqlabel{6.2.2}
R_{g,g'}=R^1_{g,g'}\ot R^2_{g,g'}=\tilde{t}_{A,A}(1_g\ot 1_{g'})\in A_{g^{-1},gg'g^{-1}}\ot A_{e,g}.
\end{equation}
We will show that the braiding $\tilde{t}$ is completely determined by the $R_{g,g'}$.
Take $m\in M_v$, $n\in N_{v'}$. We have seen in the proof of \thref{5.3} that we have
morphisms
$$(\eta, f_m):\ (Z_{\nu(v)}, A^{(\nu(v))})\to (V,M),~~
(\eta',g_n):\ (Z_{\nu'(v')}, A^{(\nu'(v'))})\to (V',N)$$
in $\Zz_A^\GG$. From the naturality of $(c^{-1},\tilde{t})$, we have the following commutative 
diagram
$$\xymatrix{
(Z_{\nu(v)}\times Z_{\nu'(v')}, A^{(\nu(v))}\ot A^{(\nu'(v'))})
\ar[d]_{(\eta,\eta',f_m\ot g_n)}\ar[r]^{(c^{-1},\tilde{t})}&
( Z_{\nu'(v')}\times Z_{\nu(v)},  A^{(\nu'(v'))}\ot A^{(\nu(v))})
\ar[d]^{(\eta',\eta, g_n\ot f_m)}\\
(V\times V',M\ot M')\ar[r]^{(c^{-1}_{V',V},\tilde{t}_{M,N})}& (V'\ot V, N\ot M)}$$
Observe that $(e,\nu(v))\in Z_{\nu(v)}$, $(e,\nu'(v'))\in Z_{\nu'(v')}$, $1_{\nu(v)}\in A^{(\nu(v))}$,
$1_{\nu'(v')}\in A^{(\nu'(v'))}$. From the commutativity of the diagram, it then follows that
\begin{eqnarray}
&&\hspace*{-2cm}
\tilde{t}_{M,N}(m\ot n)= (\tilde{t}_{M,N}\circ (f_m\ot g_n))(1_{\nu(v)}\ot 1_{\nu'(v')})\nonumber\\
&=& ((g_n\ot f_m)\circ \tilde{t}_{A^{(\nu(v))},A^{(\nu'(v'))}})(1_{\nu(v)}\ot 1_{\nu'(v')})\nonumber\\
&=& (g_n\ot f_m)(R_{\nu(v),\nu'(v')})\nonumber\\
&=& nR^1_{\nu(v),\nu'(v')}\ot m R^2_{\nu(v),\nu'(v')}.\eqlabel{6.2.3}
\end{eqnarray}
The inverse braiding can be described in a similar way: by assumption, $\tilde{t}_{M,N}$ is
invertible, and
\begin{equation}\eqlabel{6.2.4}
\tilde{t}^{-1}_{M,N}(N_{\nu'}\ot M_\nu) \subset M_{v}\ot N_{v'\nu(v)}.
\end{equation}
In fact the inclusions in \equref{6.2.1} and \equref{6.2.4} are equalities, since $\tilde{t}_{M,N}$
is bijective. In particular, we find that
$$\tilde{t}^{-1}_{A,A} (A_{\lambda',g'}\ot A_{\lambda,g})= 
A_{\lambda,g}\ot A_{\lambda'g,g'^g}.$$
Now let 
\begin{equation}\eqlabel{6.2.5}
\tilde{t}^{-1}_{A,A}(1_{g'}\ot 1_g)=Q_{g,g'}=Q^2_{g',g}\ot Q^1_{g',g}\in A_{e,g}\ot A_{g,g'^g}.
\end{equation}
The $Q_{g',g}$ describe the inverse braiding completely. Arguments similar to the ones above
show that, for $m\in M_v$ and $n\in N_{v'}$:
\begin{equation}\eqlabel{6.2.6}
\tilde{t}^{-1}_{M,N}(n\ot m)=mQ^2_{\nu'(v'),\nu(v)}\ot nQ^1_{\nu'(v'),\nu(v)}.
\end{equation}
Then we compute that
\begin{eqnarray*}
&&\hspace*{-2cm}
1_g\ot 1_{g'}=(\tilde{t}^{-1}_{A,A}\circ \tilde{t}_{A,A})(1_g\ot 1_{g'})
\equal{\equref{6.2.2}} \tilde{t}^{-1}_{A,A}(R^1_{g,g'}\ot R^2_{g,g'})\\
&\equal{\equref{6.2.6}}& R^2_{g,g'}Q^2_{gg'g^{-1},g}      \ot R^1_{g,g'}Q^2_{gg'g^{-1},g};\\
&&\hspace*{-2cm}
1_{g'}\ot 1_g= (\tilde{t}_{A,A}\circ \tilde{t}^{-1}_{A,A})(1_{g'}\ot 1_g)
\equal{\equref{6.2.5}} \tilde{t}_{A,A}(Q^2_{g',g}\ot Q^1_{g',g})\\
&\equal{\equref{6.2.3}}& Q^1_{g',g}R^1_{g,g^{-1}g'g}    \ot Q^2_{g',g}R^2_{g,g^{-1}g'g}.
\end{eqnarray*}
This shows that \equref{6.1.1} holds.\\
From the fact that $(c^{-1},\tilde{t})$ is a braiding, it follows that
\begin{eqnarray}
\tilde{t}_{A,A\ot A}&=&(A\ot \tilde{t}_{A,A})\circ (\tilde{t}_{A,A}\ot A);\eqlabel{6.2.7}\\
\tilde{t}_{A\ot A, A}&=& (\tilde{t}_{A,A}\ot A)\circ (A\ot \tilde{t}_{A,A}).\eqlabel{6.2.8}
\end{eqnarray}
Now we compute that
\begin{eqnarray*}
&&\hspace*{-2cm} 
\tilde{t}_{A,A\ot A}(1_g\ot 1_{g'}\ot 1_{g''})
\equal{\equref{6.2.3}}
(1_{g'}\ot 1_{g''})R^1_{g,g'g''} \ot 1_g R^2_{g,g'g''} \\
&\equal{\equref{5.3.2}}&
\Delta_{g,g'^g,g''^g}(R^1_{g,g'g''})\ot R^2_{g,g'g''};\\
&&\hspace*{-2cm} 
\bigl((A\ot \tilde{t}_{A,A})\circ (\tilde{t}_{A,A}\ot A)\bigr)(1_g\ot 1_{g'}\ot 1_{g''})\\
&\equal{\equref{5.3.3}}&(A\ot \tilde{t}_{A,A})(R^1_{g,g'}\ot R^2_{g,g'}\ot 1_{g''})\\
&\equal{\equref{6.2.3}}&
R^1_{g,g'}\ot 1_{g''}\tilde{R}^1_{g,g''}\ot R^2_{g,g'}\tilde{R}^2_{g,g''}\\
&=& R^1_{g,g'}\ot \tilde{R}^1_{g,g''}\ot R^2_{g,g'}\tilde{R}^2_{g,g''}.
\end{eqnarray*}
This shows that \equref{6.1.2} holds. \equref{6.1.3} can be proved in a similar way:
\begin{eqnarray*}
&&\hspace*{-2cm}
\tilde{t}_{A\ot A, A}(1_g\ot 1_{g'}\ot 1_{g''})
\equal{\equref{6.2.3}}
1_{g''} R^1_{gg',g''}\ot (1_g\ot 1_{g'})R^2_{gg',g''}\\
&\equal{\equref{5.3.2}}&
R^1_{gg',g''} \ot \Delta_{e,g,g'}(R^2_{gg',g''});\\
&&\hspace*{-2cm} 
\bigl((\tilde{t}_{A,A}\ot A)\circ (A\ot \tilde{t}_{A,A})\bigr)(1_g\ot 1_{g'}\ot 1_{g''})\\
&\equal{\equref{5.3.2}}&
(\tilde{t}_{A,A}\ot A)(1_g\ot R^1_{g',g''}\ot R^2_{g',g''})\\
&\equal{\equref{6.2.3}}&
R^1_{g',g''}\tilde{R}^1_{g,g'g''g'^{-1}}\ot 1_g \tilde{R}^2_{g,g'g''g'^{-1}} \ot R^2_{g',g''}\\
&=&R^1_{g',g''}\tilde{R}^1_{g,g'g''g'^{-1}}\ot  \tilde{R}^2_{g,g'g''g'^{-1}} \ot R^2_{g',g''}.
\end{eqnarray*}
Now take $a\in A_{\lambda,g^\lambda g'^\lambda}$. Since $\tilde{t}_{A,A}$ is right
$A$-linear, we have
$$\tilde{t}_{A,A}((1_g\ot 1_{g'})a)=\tilde{t}_{A,A}(1_g\ot 1_{g'})a.$$
Now
\begin{eqnarray*}
&&\hspace*{-2cm}
\tilde{t}_{A,A}((1_g\ot 1_{g'})a)\equal{\equref{5.3.2}}\tilde{t}_{A,A}(\Delta_{\lambda,g^\lambda, g'^\lambda}(a))\\
&=&\tilde{t}_{A,A}(a_{(1,g^{\lambda})}\ot a_{(2,g'^{\lambda})})
\equal{\equref{6.2.3}} a_{(2,g'^{\lambda})}R^1_{g^{\lambda},g'^{\lambda}}\ot
a_{(1,g^{\lambda})}R^2_{g^{\lambda},g'^{\lambda}};\\
&&\hspace*{-2cm}
\tilde{t}_{A,A}(1_g\ot 1_{g'})a\equal{\equref{6.2.2}}
(R^1_{g,g'}\ot R^2_{g,g'})a
\equal{\equref{5.3.3}} R^1_{g,g'}a_{(1,g'^{g^{-1}\lambda})}\ot R^2_{g,g'}a_{(2,g^{\lambda})}.
\end{eqnarray*}
\equref{6.1.4} follows, and we have shown that the $R_{g,g'}$ define a quasitriangular
structure on $A$.\\
Conversely, if $A$ is quasitriangular. Then we define $\tilde{t}_{M,N}$ using \equref{6.2.3}.
A lengthy but straightforward computation shows that $(c^{-1},\tilde{t})$ is a braiding on $\Zz_A^\GG$.
\end{proof}

Now let $\ul{H}$ be a Hopf group coalgebra, and assume that every $H_g$ is finitely
generated and projective as a $k$-module. Then the category $\YDZ_{\ul{H}}^{\ul{H}}$
is braided monoidal, and is isomorphic to $\Zz_{D(\ul{H})}^\GG$. We know from \prref{5.6}
that $D(\ul{H})$ is a $\GG$-graded Hopf algebra, and it follows from \thref{6.2} that we
have a quasitriangular structure on $D(\ul{H})$. The corresponding $R$-matrices can
be computed easily. The coaction map
$$\rho_{(g^{-1}, gg'g^{-1}), g}:\ H_e^*\# H_{g'}\to (H_g^*\# H_{gg'g^{-1}}) \ot H_g$$
can be computed using \equref{coaction}. In particular 
\begin{equation}\eqlabel{6.10}
\rho_{(g^{-1}, gg'g^{-1}), g}(\varepsilon\# 1_{g'})=(\xi^{(g)}\# 1_{gg'g^{-1}})\ot h^{(g)}.
\end{equation}
Then
\begin{eqnarray*}
R_{g,g'}&\equal{\equref{6.2.2}}&
\tilde{t}_{D(\ul{H}), D(\ul{H})}\bigl( (\varepsilon \# 1_g)\ot (\varepsilon \# 1_{g'})\bigr)\\
&\equal{(\ref{eq:braiding},\ref{eq:6.10})}&
(\xi^{(g)}\# 1_{gg'g^{-1}}) \ot (\varepsilon \# 1_g)h^{(g)}
=(\xi^{(g)}\# 1_{gg'g^{-1}}) \ot (\varepsilon \# h^{(g)}).
\end{eqnarray*}
In a similar way, we can compute the $Q$-matrices. Using \equref{coaction}, we
compute $\rho_{(g,g^{-1}g'g),g^{-1}}:\ H_e^*\# H_{g'}\to (H_{g^{-1}}^*\# H_{g^{-1}g'g}) 
\ot H_{g^{-1}}$:
$$\rho_{(g,g^{-1}g'g),g^{-1}}(\varepsilon \# 1_{g'})=
(\xi^{(g^{-1})}\# 1_{g^{-1}g'g})\ot h^{(g^{-1})}.$$
Then we find
\begin{eqnarray*}
&&\hspace*{-10mm}
Q_{g,g'}\equal{\equref{6.2.4}}
\tilde{q}_{D(\ul{H},\ul{H}}\bigl( (\varepsilon \# 1_{g'})\ot (\varepsilon \# 1_g)\bigr)
\equal{\equref{inversebraiding}}
(\varepsilon \# 1_g)\ol{S}_g(h^{(g^{-1})}) \ot (\xi^{(g^{-1})}\# 1_{g^{-1}g'g})\\
&=&(\varepsilon \# \ol{S}_g(h^{(g^{-1})})) \ot (\xi^{(g^{-1})}\# 1_{g^{-1}g'g}).
\end{eqnarray*}

We summarize our results.

\begin{theorem}\prlabel{6.10}
Let $\ul{H}$ be a Hopf group coalgebra, and assume that every $H_g$ is finitely generated
and projective as a $k$-module. Then $D(\ul{H})$ is a 
quasitriangular $\GG$-graded Hopf algebra, with $R$- and $S$-matrices 
$$R_{g,g'}=(\xi^{(g)}\# 1_{gg'g^{-1}}) \ot (\varepsilon \# h^{(g)})~~;~~
Q_{g,g'}=(\varepsilon \# \ol{S}_g(h^{(g^{-1})})) \ot (\xi^{(g^{-1})}\# 1_{g^{-1}g'g}).$$
The isomorphisms between the categories $\YDZ_{\ul{H}}^{\ul{H}}$ and
$\Zz^\GG_{D(\ul{H})}$ and between $\YDT_{\ul{H}}^{\ul{H}}$ and
$\Tt^\GG_{D(\ul{H})}$
(see \blref{4.5}) are isomorphisms of braided monoidal categories.
\end{theorem}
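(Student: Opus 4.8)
The plan is to assemble the statement from the results already in place, since almost all the work has been done. First I would invoke \prref{5.6}: under the standing hypothesis that every $H_g$ is finitely generated and projective, $D(\ul{H})$ is a $\GG$-graded Hopf algebra (both as a smash product and, isomorphically, as a Koppinen smash product, by \blref{4.5}). So what remains is to equip $D(\ul{H})$ with a quasitriangular structure, to identify the $R$- and $Q$-matrices explicitly, and to upgrade the two category isomorphisms to isomorphisms of braided monoidal categories.

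For the quasitriangular structure I would argue as follows. The category $\YDZ_{\ul{H}}^{\ul{H}}$ is braided monoidal, being the centre of $\Zz_{\ul{H}}$ (see \cite[Sec.~4]{cl}), and its braiding is exactly the pair $(c^{-1},\tilde t)$ of \blref{4.1}, with $\tilde t$ given by formula \equref{braiding}. By \thref{3.4} and \prref{4.4} the functor $Z$ is an isomorphism $\YDZ_{\ul{H}}^{\ul{H}}\cong\Zz_{D(\ul{H})}^{\GG}$, and by \thref{5.5a} it is monoidal; since $Z$ is compatible with the forgetful functors to $\Xx_G^G$, the transported braiding on $\Zz_{D(\ul{H})}^{\GG}$ is again of the form $(c^{-1},\tilde t)$. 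Applying \thref{6.2} to this braiding produces the desired quasitriangular structure on $D(\ul{H})$, and by construction the induced braiding on $\Zz_{D(\ul{H})}^{\GG}$ coincides with the transported one, so $Z$ is a braided monoidal isomorphism. For the $\Tt$-version: \thref{6.2} pairs this braiding with a braiding $(c,t)$ on $\Tt_{D(\ul{H})}^{\GG}$ via \equref{relation}; under the isomorphism $\Tt_{D(\ul{H})}^{\GG}\cong\YDT_{\ul{H}}^{\ul{H}}$ of \thref{5.5a} this matches the braiding of $\YDT_{\ul{H}}^{\ul{H}}$, because the braiding data of the $\Tt$- and $\Zz$-versions of the Yetter-Drinfeld categories are related by the very same formula \equref{relation} (compare the descriptions in \blref{4.1}). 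Hence both isomorphisms are braided monoidal.

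For the explicit formulas I would use \equref{6.2.2}: with $A=D(\ul{H})$, $R_{g,g'}=\tilde t_{D(\ul{H}),D(\ul{H})}\bigl((\varepsilon\#1_g)\ot(\varepsilon\#1_{g'})\bigr)$, where $\tilde t$ is computed by \equref{braiding} applied to the Yetter–Drinfeld module $H(G\times G,D(\ul{H}))$ from \blref{4.5}. Its coaction is \equref{coaction}; evaluating at $\varepsilon\#1_{g'}$ gives \equref{6.10}, and substituting into \equref{braiding} yields $R_{g,g'}=(\xi^{(g)}\#1_{gg'g^{-1}})\ot(\varepsilon\#h^{(g)})$. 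The matrix $Q_{g,g'}$ is obtained in exactly the parallel way from \equref{6.2.4} and the inverse braiding \equref{inversebraiding}, using the coaction $\rho_{(g,g^{-1}g'g),g^{-1}}$; this is precisely the calculation displayed just before the theorem.

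The step I expect to be the main obstacle is not any single computation but the bookkeeping in the middle paragraph: checking carefully that the braiding of $\YDZ_{\ul{H}}^{\ul{H}}$ transports along the (non-rigid!) duality to a braiding of $\Zz_{D(\ul{H})}^{\GG}$ that is genuinely of the form $(c^{-1},\tilde t)$ required by \thref{6.2}, and then reconciling the $\Tt$- and $\Zz$-braidings on the Yetter–Drinfeld side via \equref{relation}, where one must keep the crossed-$G$-set components $c$ versus $c^{-1}$ and the twist of the base point straight. Once this compatibility is nailed down, the rest is an application of \prref{5.6}, \thref{5.5a} and \thref{6.2} together with the dual-basis manipulations using \equref{2.9.2}.
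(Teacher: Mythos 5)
Your proposal is correct and follows essentially the same route as the paper: invoke \prref{5.6} for the $\GG$-graded Hopf algebra structure, transport the braiding of $\YDZ_{\ul{H}}^{\ul{H}}$ through the category isomorphism and apply \thref{6.2} to obtain the quasitriangular structure, then evaluate $\tilde t$ and its inverse on $(\varepsilon\#1_g)\ot(\varepsilon\#1_{g'})$ via \equref{braiding}, \equref{inversebraiding} and \equref{coaction} to get the stated $R$- and $Q$-matrices. The only point where you are slightly more explicit than the paper is the reconciliation of the $\Tt$- and $\Zz$-braidings via \equref{relation}, which the paper leaves implicit in \thref{6.2}.
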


\section{Appendix: generalized Yetter-Drinfeld modules}
A generalization of Yetter-Drinfeld modules was proposed in \cite{cmz}, see
also \cite{cmz2}. First one has to introduce Yetter-Drinfeld data. There is a functor
from Yetter-Drinfeld data to Doi-Hopf data, and the corresponding categories of
Yetter-Drinfeld modules and Doi-Hopf modules are isomorphic. This construction
was carried out in the category of vector spaces, but can be generalized to
symmetric monoidal categories. Let us give the definition of Yetter-Drinfeld data
in $\Tt_k$.\\
First we discuss discrete Yetter-Drinfeld data, these are Yetter-Drinfeld data 
in ${\Sets}$. This is a four-tuple 
$(L,G,\Lambda,X)$, where $L$ and $G$ are groups, $\Lambda$ is a monoid,
$\psi:\ \Lambda\to L$ and $\gamma:\ \Lambda\to G$ are monoid maps,
and $X$ is a set with compatible left $L$-action and right $G$-action.\\
A crossed $(L,G,\Lambda,X)$-set is a right $\Lambda$-set $V$ together with a
map $\nu:\ V\to X$ such that $\nu(y\lambda)=\psi(\lambda)^{-1}\nu(y)\gamma(\lambda)$.\\
If $(L,G,\Lambda,X)$ is a discrete Yetter-Drinfeld datum, then we have a
discrete Doi-Hopf datum $(L\times G, \Lambda,X)$, with $(\psi,\gamma):\
\Lambda\to L\times G$, and $x(l,g)=l^{-1}xg$. An $(L\times G, \Lambda,X)$-set
is the same as a crossed $(L,G,\Lambda,X)$-set.\\
An example of a discrete Yetter-Drinfeld datum is $\GG=(G,G,G)$, as discussed in
the previous Sections.\\
A Yetter-Drinfeld datum in $\Tt_k$ is a fourtuple $(\ul{K},\ul{H},\ul{A},\ul{C})$, where
\begin{itemize}
\item $\ul{K}=(L,(K_l)_{l\in L})$ and $\ul{H}=(G,(H_g)_{g\in G})$ are Hopf group-coalgebras;
\item $\ul{A}=(X, (A_x)_{x\in X})$ is a $(\ul{K},\ul{H})$-bicomodule algebra;
\item $\ul{C}=(\Lambda, (C_\lambda)_{\lambda\in \Lambda})$ is a $(\ul{K},\ul{H})$-bimodule coalgebra.
\end{itemize}
Then $(L,G,\Lambda,X)$ is a discrete Yetter-Drinfeld datum; we have coaction maps
$$\rho_{l,x,g}:\ A_{lxg}\to K_l\ot A_x\ot H_g,~~
\rho_{l,x,g}(a)=a_{[-1,l]}\ot a_{[0,x]}\ot a_{[1,g]},$$
(Sweedler notation). $C_\lambda$ is a $(K_{\psi(\lambda)},H_{\gamma(\lambda)})$-bimodule,
for every $\lambda\in \Lambda$.\\
A Yetter-Drinfeld module is a couple $(V,(M_v)_{v\in V})$, where $V$ is
a crossed $(L,G,\Lambda,X)$-set, every $M_v$ is a right $A_{\nu(v)}$-module,
and $\ul{M}$ is a right $\ul{C}$-comodule, with structure maps
$\rho_{v,\lambda}:\ M_{v\lambda}\to M_v\ot C_\lambda$ such that the compatibility
relation
$$\rho_{v,\lambda}(ma)=
m_{[0,v]}a_{[0,\nu(v)]}\ot S_{\psi(\lambda)}(a_{[-1,\psi(\lambda)^{-1}]})m_{[1,\lambda]}
a_{[2,\gamma(\lambda)]}$$
holds for all $m\in M_{v\lambda}$ and $a\in A_{\nu(v\lambda)}=
A_{\psi(\lambda)^{-1}\nu(v)\gamma(\lambda)}$.\\
If $(\ul{K},\ul{H},\ul{A},\ul{C})$ is a Yetter-Drinfeld datum in $\Tt_k$, then
$(\ul{K}^{\rm op}\ot \ul{H},\ul{A},\ul{C})$ is a Doi-Hopf datum in $\Tt_k$:
$\ul{A}$ is a right $\ul{K}^{\rm op}\ot \ul{H}$-comodule algebra with coaction maps
$$\rho_{x,(l,g)}:\ A_{l^{-1}xg}\to A_x\ot K_l\ot H_g,~~
\rho_{x,(l,g)}(a)= a_{[0,x]}\ot S_l(a_{[-1,l^{-1}]})\ot a_{[1,g]}.$$
$\ul{C}$ is a right $\ul{K}^{\rm op}\ot \ul{H}$-module coalgebra, since 
every $C_\lambda$ is a right $K_{\psi(\lambda)}^{\rm op}\ot H_\gamma(\lambda)$-module.
Yetter-Drinfeld modules over $(\ul{K},\ul{H},\ul{A},\ul{C})$ then coincide with
Doi-Hopf modules over $(\ul{K}^{\rm op}\ot \ul{H},\ul{A},\ul{C})$. We can then consider
the categories $\YDT(\ul{K},\ul{H})_{\ul{A}}^{\ul{C}}$ and
$\YDZ(\ul{K},\ul{H})_{\ul{A}}^{\ul{C}}$ which are respectively isomorphic to the
categories $\Tt_k(\ul{K}^{\rm op}\ot \ul{H})_{\ul{A}}^{\ul{C}}$ and
$\Zz_k(\ul{K}^{\rm op}\ot \ul{H})_{\ul{A}}^{\ul{C}}$.
Now the duality results
from \seref{3} can be applied.

\begin{example}
$(\ul{H},\ul{H},\ul{H},\ul{H})$ is a Yetter-Drinfeld datum in $\Tt_k$, and the
corresponding Yetter-Drinfeld modules are the Yetter-Drinfeld modules that we
considered in \seref{4}.
\end{example}

\begin{example}\exlabel{YDsing}
Let $(L,G,\Lambda,X)$ be a discrete Yetter-Drinfeld datum. The crossed 
$(L,G,\Lambda,X)$-structures on a singleton $\{*\}$ are in bijective correspondence
with $X_0=\{x_0\in X~|~x_0\gamma(\lambda)=\psi(\lambda)x_0,~{\rm for~all~}\lambda\in \Lambda\}$.
The right $\Lambda$-action on $\{*\}$ is the trivial one, and $\nu(*)=x_0$.
In the case where $L=G=\Lambda=X$, $X_0$ is just the center of $G$.\\
Let $(\ul{K},\ul{H},\ul{A},\ul{C})$ a Yetter-Drinfeld datum in $\Tt_k$, and fix $x_0\in X_0$.
An $x_0$-Yetter-Drinfeld module
is an $(\ul{K},\ul{H},\ul{A},\ul{C})$-Yetter-Drinfeld module of the form
$(\{*\}, M)$, with $\nu(*)=x_0$. The full subcategory of $\YDT(\ul{K},\ul{H})_{\ul{A}}^{\ul{C}}$
consisting of $x_0$-Yetter-Drinfeld modules will be denoted by
$\YDT_{x_0}(\ul{K},\ul{H})^{\ul{C}}_{\ul{A}}$.
\end{example}

\begin{example}\exlabel{8.3}
We consider a particular instance of \exref{YDsing}. At the discrete level, take
$L=G=\Lambda=X$. The left and right $G$-action on $X=G$ are given by
multiplication. We fix $x_0\in X=G$, and define $\psi,~\gamma:\ \Lambda\to G$
by $\psi(g)=x_0gx_0^{-1}$ and $\gamma(g)=g$. It is then easy to see that
$x_0\in X$. We thus have a discrete Yetter-Drinfeld datum, which we will denote
by $(G,G,G, {}_{x_0}G)$.\\
Let $\ul{H}$ be a Hopf group coalgebra, with underlying group $G$; we
construct a Yetter-Drinfeld datum in $\Tt_k$ with underlying discrete Yetter-Drinfeld
datum $(G,G,G, {}_{x_0}G)$. Let $\ul{K}=\ul{H}$ and $\ul{A}=\ul{H}$, with $\ul{H}$-bicomodule
algebra structure induced by the comultiplication maps. Now we make $\ul{C}=\ul{H}$
into an $H$-module coalgebra. Every $H_\lambda$ is a right $H_\lambda$-module,
by multiplication. Consider a family of algebra maps
$\varphi=(\varphi_\lambda:\ H_{x_0\lambda x_0^{-1}}\rightarrow H_\lambda)_{\lambda\in G}$.
$\varphi_\lambda$ defines a left $H_{x_0\lambda x_0^{-1}}$-module structure on
$H_\lambda$ by restriction of scalars, and this makes $H_\lambda$ a
$(H_{x_0\lambda x_0^{-1}},H_\lambda)$-bimodule. This defines a left
$\ul{H}$-bimodule coalgebra structure on $\ul{H}$ if and only if
\begin{equation}\eqlabel{crossedsystem2}
\varepsilon_{\ul{H}}\varphi_e=\varepsilon_{\ul{H}}~~{\rm and}~~
\Delta_{\lambda, \lambda'}\circ \varphi_{\lambda\lambda'}=(\varphi_\lambda\ot \varphi_{\lambda'})\circ 
\Delta_{x_0\lambda x_0^{-1}, x_0\lambda' x_0^{-1}},
\end{equation}
for all $\lambda, \lambda'\in G$. The resulting $\ul{H}$-bimodule coalgebra will
be denoted ${}_{x_0,\varphi}\ul{H}$, and ${\YDT_{x_0,\varphi}}_{\ul{H}}^{\ul{H}}$
will be a shorter notation for the category 
$\YD_{x_0}(\ul{H},\ul{H})_{\ul{H}}^{{}_{x_0,\varphi}\ul{H}}$.
This definition of an $x_0\hbox{-}(\ul{H},\ul{H},\ul{H},{}_{x_0,\varphi}\ul{H})$-Yetter-Drinfeld
module agrees with 
the right version of 
$x_0$-Yetter-Drinfeld module over a $T$-coalgebra $\ul{H}$ as introduced by Zunino in 
\cite{zun}. Recall that a $T$-coalgebra is a Hopf group-coalgebra 
$\ul{H}=(G, (H_g)_{g\in G})$ together with a family of $k$-algebra isomorphisms
$\varphi=(\varphi^\sigma_\tau:\ H_\sigma\rightarrow H_{\tau\sigma\tau^{-1}})_{\sigma, \tau\in G}$ 
satisfying, among other, the conditions
$$\varepsilon_{\ul{H}}\varphi^e_\tau=\varepsilon_{\ul{H}}~~{\rm and}~~
\Delta_{\theta\sigma\theta^{-1}, \theta\tau\theta^{-1}}\circ 
\varphi^{\sigma\tau}_\theta =(\varphi^\sigma_\theta\otimes \varphi^\tau_\theta)\circ \Delta_{\sigma, \tau}.$$
for all $\sigma, \tau, \theta\in G$. Fix $x_0\in G$, and define  
$\varphi_\lambda=\varphi^{x_0\lambda x_0^{-1}}_{x_0^{-1}}$, 
for any $\lambda\in G$. Then 
$\varepsilon_{\ul{H}}\varphi_e=\varepsilon_{\ul{H}}\varphi^e_{x_0^{-1}}=\varepsilon_{\ul{H}}$ and the family 
$\ul{\varphi}:=(\varphi_\lambda=\varphi^{x_0\lambda x_0^{-1}}_{x_0}: H_{x_0\lambda x_0^{-1}}\rightarrow H_\lambda)_{\lambda\in G}$ satisfies 
\equref{crossedsystem2}. To see this take  $\theta=x_0^{-1}$, 
$\sigma=x_0\lambda x_0^{-1}$ and $\tau=x_0\lambda' x_0^{-1}$, where $\lambda, \lambda'\in G$
in the above equality. In this situation ${\YDT_{x_0,\varphi}}_{\ul{H}}^{\ul{H}}$ is precisely the category of 
right $x_0$-Yetter-Drinfeld modules over a $T$-coalgebra, in the spirit of \cite{zun}. 
\end{example}

\begin{example} \exlabel{8.4}
We present a variation of \exref{8.3}. At the discrete level, let $\psi$ be the identity on $G$,
and let $\gamma$ be conjugation by a fixed $x_0\in G$: $\gamma(g)=x_0^{-1}gx_0$.
An $\ul{H}$-bimodule structure on $\ul{H}$ can be obtained using a family of
algebra maps $\varphi'=(\varphi'_\lambda:\ H_{x_0^{-1}\lambda x_0}\rightarrow H_\lambda)_{\lambda\in G}$. The $(H_\lambda,H_{x_0^{-1}\lambda x_0})$-bimodule structure on $H_\lambda$
is obtained via restriction of scalars, using the identity on the left and $\varphi_\lambda$
on the right hand side. This defines an $\ul{H}$-bimodule coalgebra structure on $\ul{H}$
if and only if  
\begin{equation}\eqlabel{crossedsystem1}
\varepsilon_{\ul{H}}\varphi'_e=\varepsilon_{\ul{H}}~~{\rm and}~~
\Delta_{\lambda, \lambda'}\circ \varphi'_{\lambda\lambda'}=(\varphi'_\lambda\otimes \varphi'_{\lambda'})\circ 
\Delta_{x_0^{-1}\lambda x_0, x_0^{-1}\lambda'x_0},
\end{equation}
for all $\lambda, \lambda'\in G$. The resulting $\ul{H}$-bimodule coalgebra is denoted
by $\ul{H}_{x_0,\varphi'}$, and we use the shorter notation 
${\YDT_{\ul{H}}^{\ul{H}}}_{x_0,\varphi}$ for
the category $\YD_{x_0}(\ul{H},\ul{H})_{\ul{H}}^{\ul{H}_{x_0,\varphi}}$.\\
Particular examples can be deduced from $T$-coalgebras. More precisely, let $\ul{H}$ be a 
$T$-coalgebra and 
$\varphi=(\varphi^\sigma_\tau: H_\sigma\rightarrow H_{\tau\sigma\tau^{-1}})_{\sigma, \tau\in G}$  
the conjugation of $\ul{H}$. We have a family of algebra morphisms  
$\ul{\varphi'}=(\varphi'_\lambda=\varphi^{x_0^{-1}\lambda x_0}_{x_0}: H_{x_0^{-1}\lambda x_0}\rightarrow H_\lambda)_{\lambda\in G}$. 
A simple inspection shows that $\ul{\varphi'}$ satisfies 
\equref{crossedsystem1}. Thus it is possible to define the notion of 
$x_0$-Yetter-Drinfeld module in a way that is different from the one in \cite{zun}.
\end{example}

\end{document}